\colorlet{symbols}{blue}     
\def\bluesymbol#1{\textcolor{symbols}{#1}}
\def\1{\mathbf{\bluesymbol{1}}}
\def\Xk{\bluesymbol{X^k}}
\def\bf{\bluesymbol{f}}
\def\bg{\bluesymbol{g}}
\def\bpi{\bluesymbol{\pi}}
\def\fbar{\bluesymbol{\bar{f}}}
\def\gbar{\bluesymbol{\bar{g}}}
\def\bTau{\bluesymbol{\tau}}
\def\Taubar{\bluesymbol{\bar{\tau}}}
\def\T{\mathcal{T}}
\def\Tbar{\bar{\T}}
\def\A{\mathcal{A}}
\def\Abar{\bar{\A}}
\def\G{\mathcal{G}}
\def\Gbar{\bar{\G}}
\def\K{\mathcal{K}}
\def\Q{\mathcal{Q}}
\def\E{\mathcal{E}}
\def\Recon{\mathcal{R}}
\def\D{\mathcal{D}}
\def\Fbar{\bar{\mathcal{F}}}
\def\s{\mathfrak{s}}
\newtheorem{theorem}{Theorem}[section]
\newtheorem{lemma}[theorem]{Lemma}
\newtheorem{proposition}[theorem]{Proposition}
\newtheorem{corollary}[theorem]{Corollary}
\newtheorem{assump}[theorem]{Assumption}
\theoremstyle{definition}
\newtheorem{definition}[theorem]{Definition}
\newtheorem{remark}[theorem]{Remark}
\numberwithin{equation}{section}
\DeclareFontFamily{U}{matha}{\hyphenchar\font45}
\DeclareFontShape{U}{matha}{m}{n}{
      <5> <6> <7> <8> <9> <10> gen * matha
      <10.95> matha10 <12> <14.4> <17.28> <20.74> <24.88> matha12
      }{}
\DeclareSymbolFont{matha}{U}{matha}{m}{n}
\DeclareFontFamily{U}{mathx}{\hyphenchar\font45}
\DeclareFontShape{U}{mathx}{m}{n}{
      <5> <6> <7> <8> <9> <10>
      <10.95> <12> <14.4> <17.28> <20.74> <24.88>
      mathx10
      }{}
\DeclareSymbolFont{mathx}{U}{mathx}{m}{n}
\DeclareMathDelimiter{\vvvert}{0}{matha}{"7E}{mathx}{"17}
\newcommand*{\mint}[1]{%
  \mint@l{#1}{}%
}
\newcommand*{\mint@l}[2]{%
  \@ifnextchar\limits{%
    \mint@l{#1}%
  }{%
    \@ifnextchar\nolimits{%
      \mint@l{#1}%
    }{%
      \@ifnextchar\displaylimits{%
        \mint@l{#1}%
      }{%
        \mint@s{#2}{#1}%
      }%
    }%
  }%
}
\newcommand*{\mint@s}[2]{%
  \@ifnextchar_{%
    \mint@sub{#1}{#2}%
  }{%
    \@ifnextchar^{%
      \mint@sup{#1}{#2}%
    }{%
      \mint@{#1}{#2}{}{}%
    }%
  }%
}
\def\mint@sub#1#2_#3{%
  \@ifnextchar^{%
    \mint@sub@sup{#1}{#2}{#3}%
  }{%
    \mint@{#1}{#2}{#3}{}%
  }%
}
\def\mint@sup#1#2^#3{%
  \@ifnextchar_{%
    \mint@sup@sub{#1}{#2}{#3}%
  }{%
    \mint@{#1}{#2}{}{#3}%
  }%
}
\def\mint@sub@sup#1#2#3^#4{%
  \mint@{#1}{#2}{#3}{#4}%
}
\def\mint@sup@sub#1#2#3_#4{%
  \mint@{#1}{#2}{#4}{#3}%
}
\newcommand*{\mint@}[4]{%
  \mathop{}%
  \mkern-\thinmuskip
  \mathchoice{%
    \mint@@{#1}{#2}{#3}{#4}%
        \displaystyle\textstyle\scriptstyle
  }{%
    \mint@@{#1}{#2}{#3}{#4}%
        \textstyle\scriptstyle\scriptstyle
  }{%
    \mint@@{#1}{#2}{#3}{#4}%
        \scriptstyle\scriptscriptstyle\scriptscriptstyle
  }{%
    \mint@@{#1}{#2}{#3}{#4}%
        \scriptscriptstyle\scriptscriptstyle\scriptscriptstyle
  }%
  \mkern-\thinmuskip
  \int#1%
  \ifx\\#3\\\else_{#3}\fi
  \ifx\\#4\\\else^{#4}\fi  
}
\newcommand*{\mint@@}[7]{%
  \begingroup
    \sbox0{$#5\int\m@th$}%
    \sbox2{$#5\int_{}\m@th$}%
    \dimen2=\wd0 %
    \let\mint@limits=#1\relax
    \ifx\mint@limits\relax
      \sbox4{$#5\int_{\kern1sp}^{\kern1sp}\m@th$}%
      \ifdim\wd4>\wd2 %
        \let\mint@limits=\nolimits
      \else
        \let\mint@limits=\limits
      \fi
    \fi
    \ifx\mint@limits\displaylimits
      \ifx#5\displaystyle
        \let\mint@limits=\limits
      \fi
    \fi
    \ifx\mint@limits\limits
      \sbox0{$#7#3\m@th$}%
      \sbox2{$#7#4\m@th$}%
      \ifdim\wd0>\dimen2 %
        \dimen2=\wd0 %
      \fi
      \ifdim\wd2>\dimen2 %
        \dimen2=\wd2 %
      \fi
    \fi
    \rlap{%
      $#5%
        \vcenter{%
          \hbox to\dimen2{%
            \hss
            $#6{#2}\m@th$%
            \hss
          }%
        }%
      $%
    }%
  \endgroup
}
\newcommand\ubar[1]{\underaccent{\bar}{#1}}
\newcommand{\R}{\mathbf{R}}
\newcommand{\N}{\mathbf{N}}
\newcommand{\Z}{\mathbf{Z}}
\newcommand{\F}{\mathcal{F}}
\newcommand{\B}{\mathfrak{B}}
\newcommand{\BIGOP}[1]{\mathop{\mathchoice%
{\raise-0.22em\hbox{\huge $#1$}}%
{\raise-0.05em\hbox{\Large $#1$}}{\hbox{\large $#1$}}{#1}}}
\newcommand{\BIGboxplus}{\mathop{\mathchoice%
{\raise-0.35em\hbox{\huge $\boxplus$}}%
{\raise-0.15em\hbox{\Large $\boxplus$}}{\hbox{\large $\boxplus$}}{\boxplus}}}
\begin{document}

\title{Modelled distributions of Triebel--Lizorkin type}
\author[S.\ Hensel]{Sebastian Hensel}
\address{Humboldt-Universitaet zu Berlin, Institut fuer Mathematik, Unter den Linden 6, 10099 Berlin}
\curraddr{Institute of Science and Technology Austria (IST), Am Campus 1, AT-3400 Klosterneuburg}
\email{sebastian.hensel@ist.ac.at}

\author[T.\ Rosati]{Tommaso Rosati}
\address{Humboldt-Universitaet zu Berlin, Institut fuer Mathematik, Unter den Linden 6, 10099 Berlin}
\email{rosatito@hu-berlin.de}



\date{\today}


\keywords{Regularity structures, Triebel--Lizorkin spaces, Reconstruction theorem, 
Embeddings, Schauder estimates}

\begin{abstract}
In order to provide a local description of a regular function in a small neighbourhood of
a point $x$, it is sufficient by Taylor's theorem to know the value of the function
as well as all of its derivatives up to the required order at the point $x$ itself. In other words,
one could say that a regular function is locally modelled by the set of polynomials. The theory
of regularity structures due to Hairer 
generalizes this observation
and provides an abstract setup, which in the application of singular SPDE extends the set
of polynomials by functionals constructed from, e.g., white noise. In this context, the 
notion of Taylor polynomials is lifted to the notion of so-called modelled distributions.
The celebrated reconstruction theorem, which in turn was inspired by Gubinelli's \textit{sewing lemma}, 
is of paramount importance for the theory. It enables to reconstruct
a modelled distribution as a true distribution on $\R^d$ which is locally approximated by this 
extended set of models or ``monomials''.   
In the original work of Hairer, the error is measured based on H\"{o}lder norms.
This was then generalized to the whole scale of Besov spaces by Hairer and Labb\'{e} in
subsequent papers. It is the aim of this work to adapt the analytic part of the theory 
of regularity structures to the scale of Triebel--Lizorkin spaces. 
\end{abstract}

\maketitle

\setcounter{tocdepth}{1}
\tableofcontents

\section{Introduction}

The theory of function spaces represents undoubtedly a vast subject within
the mathematical landscape. One common theme however is given by the
question of how to measure the ``regularity'' or ``smoothness'' of a function. 
Unsurprisingly, an appropriate answer often depends 
on the given context or application. 
From a historical point of view, the most basic notion of regularity is
encoded in terms of the classical spaces of differentiable functions which
come with integer regularity index. By Taylor's theorem, a function is
differentiable up to some fixed order if the small-scale fluctuations of the
function are given, up to some error term, 
by a polynomial with degree of required order.
In this sense, ``regular'' functions are exactly 
those which locally look like polynomials.

Now, in the context of differential equations there are lots of model problems where this point of view 
is simply not appropriate, i.e.\ where it is not expected that the
small-scale fluctuations of the solution are those of polynomials.
This is for example the case for controlled ODEs with rough driving noise, or
for singular stochastic PDEs with white noise as the driving force of the equation.
In the latter example, the deep insight comes from the observation
that one would rather expect the solution to locally look like functionals which are build
from the driving noise. In the last years, two solution theories were developed to formalize
this observation and give for a first time a direct meaning to interesting SPDEs like the KPZ equation, the
2D parabolic Anderson model and generalizations thereof, or the 3D stochastic quantization equation
for the $(\Phi)^4_3$ euclidean quantum field.

On one hand, there is the theory of paracontrolled distributions due to Gubinelli, Imkeller and Perkowski
\cite{Gubinelli:2015}. In this theory, ideas from paradifferential calculus as well as the theory of
controlled rough paths are combined in order to give a rigorous treatment of ill-posed stochastic PDEs.
Another approach, though related to the paracontrolled approach, was developed by Hairer \cite{Hairer:2014}
in his theory of regularity structures. In the language of his theory, 
one would refer to the set of polynomials as a ``model'' for (in the classical sense) differentiable functions.
As we already alluded to above, this is not the right formalization of ``regularity''
in the context of many interesting SPDEs. Instead, the theory of regularity structures develops a framework
which extends the set of ``models'' beyond the set of polynomials in order to provide a useful notion of ``regularity''.
As a consequence, the notion of Taylor polynomials is generalized to this enlarged setting and the
corresponding ``function space'' is given by so-called modelled distributions. It is a key result
of the theory that all modelled distributions can be reconstructed as genuine distributions
which then locally look like the given fixed set of models. This result is referred to
as the \textit{reconstruction theorem}.

In the original work on regularity structures \cite{Hairer:2014}, the space of
modelled distributions under consideration was set up in direct analogy to H\"{o}lder spaces.
The theory in particular allows for potential blow-up on the $t=0$ hyperplane
in order to treat a large class of initial data. Boundary conditions entered the
analysis in form of the initial data, since the work in \cite{Hairer:2014} concentrated specifically on
spatially periodic problems. Thus, we would like to refer the interested reader to
the very recent work of Gerencs\'{e}r and Hairer \cite{Gerencser:2017}, who extended
the original framework to also allow for singularities near the boundary of domains
in the space variable. This generalization then enables the authors to study singular SPDE
with certain boundary conditions, e.g.\ the KPZ equation with Dirichlet and Neumann
conditions or the generalized 2D parabolic Anderson model with Dirichlet conditions.

A first step in the direction of the full Besov scale appeared in the work
of Hairer and Labb\'{e} \cite{Hairer:2015} on multiplicative stochastic heat equations.
The motivation for this generalization developed from the desire to start the equation
from a Dirac mass. One key observation here is that the Dirac mass
possesses improved regularity when considered as an element of the Besov scale
in contrast to H\"{o}lder spaces. This then led to a generalization of the original framework
to analogues of the Besov spaces of type $B^\alpha_{p,\infty}$. We also want to mention at this point the recent work of 
Pr\"{o}mel and Teichmann \cite{Promel:2016} which study Sobolev--Slobodeckij type of
modelled distributions. Recall here that the classical Sobolev--Slobodeckij spaces are
norm equivalent to the Besov spaces $B^\alpha_{p,p}$. The whole scale of Besov spaces
was then eventually treated in a recent work of Hairer and Labb\'{e} \cite{Hairer:2015}.

In this paper, we adapt the analytic part of the theory of regularity
structures to a space of modelled distributions which mimics classical
Triebel--Lizorkin distributions. To this end, let us use the remainder of this introduction
to first describe the key notions from the theory of regularity structures, i.e.\ the notion 
of a regularity structure itself as well as the notion of a model. 
Based on that, we will briefly discuss the results on the Besov scale
of modelled distributions as obtained by Hairer and Labb\'{e} in \cite{Hairer:2017}. 
We eventually conclude the introduction with an outline of the paper.

\subsection*{Regularity structures} We recall the basic notions from the theory
of regularity structures as introduced by Hairer in \cite{Hairer:2014}. 
We also use the opportunity to clarify various notation.
First and foremost, a \textit{regularity structure} consists of a triple $(\A,\T,\G)$
where we call, following Hairer, $\A$ the set of \textit{homogeneities}, $\T$
the \textit{model space} and $\G$ the \textit{structure group}. To form a
regularity structure, the set of homogeneities $\A$ is required to be a subset of $\R$, which
is bounded from below and locally finite. Furthermore, the model space $\T$ is required 
to be a graded vector space of type $\bigoplus_{\zeta\in\A}\T_\zeta$, where each $\T_\zeta$
itself is a Banach space. Finally, the structure group $\G$ is a group of linear maps
$\Gamma\colon\T\to\T$, with the property that for all $\zeta\in\A$, all $\bTau\in\T_\zeta$
as well as all $\Gamma\in\G$ it holds $\Gamma\bTau-\bTau\in\bigoplus_{\beta\in\A\cap(-\infty,\zeta)}\T_{\beta}$.

In the following, we will denote by $\Q_\zeta$ the projection from $\T$ onto $\T_\zeta$.
Furthermore, we define $|\bTau|_\zeta := \|\Q_\zeta\bTau\|_{\T_\zeta}$ for all $\bTau\in\T$.
For notational convenience, let us also set $\A_\gamma:=\A\cap(-\infty,\gamma)$ as well as
$\T_\gamma^-:=\bigoplus_{\beta\in\A_\gamma}\T_\beta$, where $\gamma\in\R$. Then, we
denote by $\Q_{<\gamma}$ the projection of $\T$ onto $\T_\gamma^-$. For all what follows
in this work, given a fixed regularity structure $(\A,\T,\G)$ and a fixed $\gamma>0$,
the integer $r\in\N$ is always assumed to be the smallest positive integer such that
$r>|\min\A|\vee|\max\A_\gamma|$. Finally, once and for all, let us also fix a scaling $\s=(\s_1,\ldots,\s_d)\in\N^d$. 
Given this scaling, we write $\|x\|_\s = \sup |x_i|^{1/{\s_i}}$ for $x\in\R^d$, i.e.\ we consider the $\s$-scaled
``supremum norm'' on $\R^d$. Given $x\in\R^d$ and $R>0$, we then denote by $Q(x,R)$ the balls
centred at $x$ and radius $R$ with respect to this ``norm''. Furthermore, let $A(x,R)$ 
denote the annuli $\{z\in\R^d\colon R/2\leq \|z\|_\s\leq R\}$.

Apart from the algebraic setup represented by the triple $(\A,\T,\G)$, the theory of regularity structures provides
the elements of the model space with some analytical structure. This is the
content of the key notion of a \textit{model}, which is a pair
$(\Pi,\Gamma)$ obeying the following requirements. The object $\Pi$ denotes
a family $(\Pi_x)_{x\in\R^d}$ which itself consists of continuous linear
maps from $\T$ into the space of Schwartz distributions $\D'(\R^d)$
such that, for every $\gamma>0$, the following bound holds true
\begin{align*}
    \|\Pi\|_x := \sup_{\eta\in\B^r}\sup_{\lambda\in(0,1]}\sup_{\zeta\in\A_\gamma}
    \sup_{\bTau\in\T_\zeta}\frac{|\langle\Pi_x\bTau,\eta^\lambda_x\rangle|}{|\bTau|\lambda^\zeta}
    \lesssim 1,
\end{align*}
uniformly over all $x\in\R^d$. Here, we still have to clarify some notation.
First, we denote by $\mathcal{C}^r(\R^d)$ the space of functions $\eta\colon\R^d\to\R$
such that $\eta$ is continuously differentiable for all orders $k\in\N^d$ with scaled
degree $|k|_\s:=k_1\s_1+\cdots+k_d\s_d\leq r$. Then, $\B^r$ denotes the space of all functions
in $\mathcal{C}^\infty(\R^d)$, which are supported in $Q(0,1)$ and with $\mathcal{C}^r(\R^d)$-norm
bounded by 1. In addition, we made use of
\begin{align*}
    \eta^\lambda_x(z) := \lambda^{-|\s|}\eta\big(\lambda^{-\s_1}(z_1{-}x_1),\ldots,\lambda^{-\s_d}(z_d{-}x_d)\big)
\end{align*}
for all $\eta\colon\R^d\to\R$, all $\lambda\in (0,1]$ and all $x,z\in\R^d$. 
Now, we proceed with the object $\Gamma$, which is a map $\R^d\times\R^d\to\G$ such that
\begin{itemize}\itemsep3pt
    \item[i)] $\Gamma_{x,x}=1,\,\Gamma_{x,z}\Gamma_{z,y}=\Gamma_{x,y}$ for all 
              $x,y,z\in\R^d$,
    \item[ii)] $\Pi_y=\Pi_x\Gamma_{x,y}$ for all $x,y\in\R^d$, and
    \item[iii)] the following bound is satisfied
                \begin{align*}
                    \|\Gamma\|_{x,y} := \sup_{\zeta\in\A_\gamma}\sup_{\beta\in\A\cap(-\infty,\zeta]}
                    \sup_{\bTau\in\T_\zeta}\frac{|\Gamma_{x,y}\bTau|_\beta}{|\bTau|\|x{-}y\|_\s^{\zeta-\beta}}
                    \lesssim 1,
                \end{align*}
                uniformly over all $x\in\R^d$ and all $y\in Q(x,1)$.
\end{itemize}
Finally, we will write $$\|\Pi\| := \sup_{x\in\R^d}\|\Pi\|_x,\quad\|\Gamma\| := \sup_{x,y\in\R^d}\|\Gamma\|_{x,y}.$$

An elementary example of a regularity structure is given by the \textit{polynomial regularity structure}
which in the following will always be denoted by $(\Abar, \Tbar, \Gbar)$. The associated set of
homogeneities is simply $\Abar=\N_0$. For $\zeta\in\Abar$, we let $\T_\zeta$ be the linear span
generated by monomials $\Xk=\textcolor{blue}{X_1^{k_1}\cdots X_d^{k_d}}$ with scaled degree $|k|_\s=\zeta$,
i.e.\ in particular $\Tbar=\R[\textcolor{blue}{X_1},\ldots,\textcolor{blue}{X_d}]$. Finally, 
the structure group $\Gbar\sim(\R^d,+)$ acts on polynomials $Q\in\Tbar$ as the group of translations
on $\R^d$, i.e.\
\begin{align*}
    (\Gamma_hQ)(\textcolor{blue}{X}) = Q(\textcolor{blue}{X}+h\1), \quad h\in\R^d.
\end{align*}
Finally, the polynomial structure $(\Abar,\Tbar,\Gbar)$ comes with a canonical model.
This \textit{canonical polynomial model} is given by
\begin{align*}
    (\Pi_x\Xk)(y) = (y-x)^k,\quad \Gamma_{x,y}=\Gamma_{x-y}.
\end{align*}

\subsection*{Besov scale of modelled distributions} Let us now recall from
the work of Hairer and Labb\'{e} \cite{Hairer:2017} the Besov scale of modelled distributions,
which resembles the classical Besov scale on the level of a regularity structure. 
To this end (and for all that follows), $L^p$ will always refer to the Lebesgue space 
$L^p(\R^d,\mathrm{d}x)$ with $x$ denoting the corresponding integration variable.

\begin{definition}
Let $(\A,\T,\mathcal{G})$ be a regularity structure, and let $(\Pi,\Gamma)$ be a model.
Consider $1\leq p \leq\infty$, $1\leq q \leq \infty$ as well as $\gamma > 0$. 
Then, we let $\mathcal{B}_{p,q}^\gamma$ be the (Banach) space of all functions 
$\bf\colon\R^d\to\T_{\gamma}^-$ such that
\begin{itemize}
\item[i)] $\displaystyle\sup_{\zeta\in\A_{\gamma}}\big\||\bf(x)|_\zeta\big\|_{L^p} < \infty$,
\item[ii)] $\displaystyle\sup_{\zeta\in\A_{\gamma}}\bigg(
\int_{Q(0,1)}\bigg\|\frac{|\bf(x{+}h)-\Gamma_{x+h,x}\bf(x)|_\zeta}
{\|h\|_\s^{\gamma-\zeta}}\bigg\|_{L^p}^q\,\frac{\mathrm{d}h}{\|h\|_\s^{|\s|}}\bigg)^\frac{1}{q}<\infty$.
\end{itemize}
The associated norm for $\bf\in\mathcal{B}_{p,q}^\gamma$ is denoted by 
$\vvvert\bf\vvvert_{\mathcal{B}_{p,q}^\gamma}$.
\end{definition}

One of the key results in \cite{Hairer:2017} was to obtain a ``countable description''
of this space. The idea is to work only with local volume means of a modelled distribution.
This leads to a scale of spaces denoted by $\bar{\mathcal{B}}$, cf.\ Definition \ref{def:discreteBesov},
which turns out to be equivalent to the scale $\mathcal{B}$, cf.\ \cite[Thm 2.15]{Hairer:2017}.
The idea to work with local volume means in order to construct the reconstruction operator already appeared in 
the work of Hairer and Labb\'{e} \cite{Hairer:2015} on multiplicative stochastic heat equations.

With this ``countable description'' at their hands, Hairer and Labb\'{e} can proof on one side
the reconstruction theorem for the full scale of Besov type modelled distributions (with non-integer regularity index), 
and on the other side various continuous embeddings which are already well-known in the classical setting of Besov spaces.
They further obtain Schauder type estimates for their spaces of modelled distributions.

The main purpose of the present work is to adapt this theory to spaces of modelled distributions,
which shall resemble the scale of Triebel--Lizorkin distributions in the framework of regularity structures.
To this end, we start in Section \ref{sec:wavelets} with providing the necessary results 
for the classical Triebel--Lizorkin spaces, i.e.\ a wavelet characterization and a convergence criterion.
In a next step, we introduce in Section \ref{sec:modelled} the corresponding space of modelled
distributions. Here, we opt for a definition which is in analogy to the classical characterization
of Triebel--Lizorkin spaces in terms of volume means of differences, cf.\ for
more on characterizations in terms of differences the book of Triebel
\cite[Sec.\ 2.5.11]{Triebel:1983}. We then provide, following the ideas discussed above,
results which show that it is again sufficient to work only with a discrete set of volume
means, cf.\ Proposition \ref{prop:forth} as well as Proposition \ref{prop:back}. 

In Section \ref{sec:embedding}, we make use of this characterization in order to proof embeddings for the spaces
considered in this work. We also provide embeddings which incorporate the Besov scale
of modelled distributions. Section \ref{sec:reconstruction} is then devoted to the proof
of the reconstruction theorem, and in Section \ref{sec:schauder} we show that it is still possible
to obtain Schauder estimates in the framework of the Triebel--Lizorkin scale of modelled distributions.
We also included a short section on products of modelled distributions, cf.\ Section \ref{sec:products}.
Finally, in Section \ref{sec:Besov} we shift our discussion back to the Besov scale.
The motivation comes from the work in \cite{Hairer:2015}, where the authors use volume
means of differences instead of differences itself (as in \cite{Hairer:2017}) for their respective 
space of modelled distributions. In the classical setting of Besov spaces, it is a well-known
fact that this actually makes no difference. We show that this is still true in the framework
of regularity structures.

\section{Some harmonic analysis} \label{sec:wavelets}
The following definition introduces the precise space of distributions which we 
are going to use in this work. The notation $L^q_\lambda$ appearing in the definition
of this space is used as a shortcut for the space $L^q((0,1],\lambda^{-1}\mathrm{d}\lambda)$.
Furthermore, for $\beta\in\N_0$, we denote by $\B^r_\beta(\R^d)$ the subspace of functions
in $\B^r(\R^d)$ which annihilate polynomials with scaled degree at most $\beta$.

\begin{definition}
Let $1\leq p < \infty$, $1\leq q \leq \infty$ and $\alpha\in\R$. Furthermore, fix $r\in\N$
such that $r > |\alpha|$. In the case of $\alpha < 0$, we let $F_{p,q}^\alpha$ be
the (Banach) space of Schwartz distributions $\xi$ on $\R^d$ with
\begin{align}\label{TL1}
    \bigg\|\Big\|\sup_{\eta\in\B^r(\R^d)}
    \frac{|\langle\xi,\eta^{\lambda}_x\rangle|}{\lambda^\alpha}\Big\|_{L^q_\lambda}\bigg\|_{L^p} < \infty.
\end{align}
On the other side, if $\alpha\geq 0$, we require that
\begin{align}\label{TL2}
    \Big\|\sup_{\eta\in\B^r(\R^d)}|\langle\xi,\eta_x\rangle|\Big\|_{L^p} < \infty,\quad
   \bigg\|\Big\|\sup_{\eta\in\B^r_{\lfloor\alpha\rfloor}(\R^d)}
    \frac{|\langle\xi,\eta^{\lambda}_x\rangle|}{\lambda^\alpha}\Big\|_{L^q_\lambda}\bigg\|_{L^p} < \infty. 
\end{align}
No matter which case, we denote by $\|\xi\|_{F^\alpha_{p,q}}$ the corresponding norm.
\end{definition}

Note that this scale of spaces depends on the chosen scaling $\s$ through the definition
of the spaces $\B^r(\R^d)$ and $\B^r_{\lfloor\alpha\rfloor}(\R^d)$. Next, we aim for
a characterization of the spaces $F^\alpha_{p,q}$ in terms of a wavelet analysis. 
Such a characterization is of course not new, cf.\ the book of Triebel \cite{Triebel:2006}, 
but we still prefer to give a proof since we allow for non-trivial scalings of $\R^d$. 
To this end, let us recall the necessary ingredients. For more on wavelets with
compact support and certain regularity, we refer to the work of Daubechies \cite{Daubechies:1988}.

For $r>0$, there is a compactly supported scaling function $\varphi\in\mathcal{C}^r(\R)$ with 
the following properties:
\begin{itemize}\itemsep3pt
\item[i)]   $\int_{\R^d}\mathrm{d}x\,\varphi(x)\varphi(x{-}k) = \delta_{k,0}$
            for every $k\in\Z$,
\item[ii)]  there exists a finite family of constants $(a_k)_{k\in K}$ with $K\subset\Z$
            such that $\varphi(x)=\sum_{k\in K}a_k\varphi(2x{-}k)$, and
\item[iii)] for every $x\in\R^d$ and every polynomial $Q$ of scaled degree at most $r$, we have
            $\sum_{k\in\Z}\int_{\R^d}Q(z)\varphi(z{-}k)\varphi(x{-}k)\,\mathrm{d}z = Q(x).$
\end{itemize}
Now, given such a scaling function $\varphi$ we let $V_n$ be the linear subspace of $L^2(\R^d)$
generated by functions of the form 
\begin{align*}
    \varphi^n_y(x) := 2^{-n\frac{|\s|}{2}}\prod_{i=1}^d\varphi^{2^{-n\s_i}}_{y_i}(x_i),
\end{align*}
where $y\in\Lambda_n$ and
\begin{align*}
    \Lambda_n = \big\{(y_1,\ldots,y_d)\in\R^d\colon y_i=2^{-n\s_i}k_i,\,k_i\in\Z,\,i=1,\ldots,d\big\}.
\end{align*}
The properties of $\varphi$ ensure that $V_n\subset V_{n+1}$. Wavelet theory then also
guarantees the existence of a finite set $\Psi\subset\mathcal{C}^r(\R^d)$ of 
compactly supported functions such that
\begin{itemize}\itemsep3pt
\item[i)] the linear subspace generated by functions of type
          $\psi^n_y:=2^{-n|\s|/2}\psi^{2^{-n}}_y$, where $\psi\in\Psi$ and $y\in\Lambda_n$,
          equals the orthogonal complement of $V_n$ in $V_{n+1}$, for all $n\geq 0$,
\item[ii)] we have $\int_{\R^d}\mathrm{d}x\,x^k\psi(x)=0$
           for every $k\in\N_0^d$ with $|k|_\s\leq r$ and every $\psi\in\Psi$, and
\item[iii)] for all $n\geq 0$, the set $\{\varphi^n_y\colon y\in\Lambda_n\}\cup
            \{\psi^m_y\colon m\geq n,\,\psi\in\Psi,\,y\in\Lambda_m\}$ yields an
            orthonormal basis for $L^2(\R^d)$.
\end{itemize}
Finally, for every $n\geq 0$ and $y\in\Lambda_n$, define the cubes
\begin{align*}
    Q_y^n = [y_1,y_1+2^{-n\s_1})\times\cdots\times[y_d,y_d+2^{-n\s_d})
\end{align*}
and let $\chi_y^n$ denote the associated characteristic functions. Now, the announced wavelet description reads as follows.  

\begin{proposition}\label{prop:waveletBounds}
Let $1\leq p < \infty,\,1\leq q \leq \infty$ and $\alpha\in\R$. In addition, choose $r\in\N$
such that $r>|\alpha|$ and consider a Schwartz distribution $\xi$ on $\R^d$. Then, one has $\xi\in F^\alpha_{p,q}$
if, and only if, the following two bounds hold:
\begin{equation}
\begin{aligned}\label{waveletBounds}
    \Big(\sum_{y\in\Lambda_0} |\langle\xi,\varphi_y^0\rangle|^p\Big)^\frac{1}{p}&<\infty, \\
    \sup_{\psi\in\Psi}\bigg\|\bigg(\sum_{n\geq 0}\sum_{y\in\Lambda_n}
    \frac{|\langle\xi,\psi^n_y\rangle|^q}{2^{-n(\frac{|\s|}{2}+\alpha)q}}
    \chi_y^n(x)\bigg)^\frac{1}{q}\bigg\|_{L^p} &< \infty.
\end{aligned}
\end{equation}
In particular, this gives an equivalent norm for the space $F^\alpha_{p,q}$ which
is encoded in terms of the magnitude of wavelet coefficients.
\end{proposition}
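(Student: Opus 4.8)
The plan is to prove both implications by passing through the orthonormal wavelet expansion
\[
\xi = \sum_{y\in\Lambda_0}\langle\xi,\varphi^0_y\rangle\,\varphi^0_y + \sum_{n\ge 0}\sum_{\psi\in\Psi}\sum_{y\in\Lambda_n}\langle\xi,\psi^n_y\rangle\,\psi^n_y ,
\]
which converges in $\mathcal{S}'(\R^d)$ by standard wavelet theory. Beforehand I would record three elementary auxiliary estimates. (i) A \emph{rescaling estimate}: since $\varphi$ and the $\psi\in\Psi$ are compactly supported $\mathcal{C}^r$ functions and the $\psi$ in addition annihilate polynomials of scaled degree $\le r$, a fixed anisotropic dilation of $\psi$ lies (up to a small multiplicative constant) in $\B^r$, respectively in $\B^r_{\lfloor\alpha\rfloor}$; this yields $|\langle\xi,\psi^n_y\rangle|\lesssim 2^{-n|\s|/2}\sup_{\eta}|\langle\xi,\eta^{\kappa 2^{-n}}_y\rangle|$ and $|\langle\xi,\varphi^0_y\rangle|\lesssim\sup_{\eta}|\langle\xi,\eta^{\kappa}_y\rangle|$ for a fixed $\kappa>0$, the supremum running over $\B^r$ in general and over $\B^r_{\lfloor\alpha\rfloor}$ for the $\psi$-coefficients when $\alpha\ge 0$. (ii) \emph{Translation invariance of the local suprema}: $\sup_{\eta\in\B^r}|\langle\xi,\eta^{\mu}_y\rangle|\lesssim\sup_{\eta\in\B^r}|\langle\xi,\eta^{\mu}_z\rangle|$ whenever $\|y-z\|_\s\le C\mu$, $\mu\in(0,1]$ (cover the enlarged support by unit balls), with the analogue for $\B^r_{\lfloor\alpha\rfloor}$; consequently these local suprema at a lattice point $y$ are dominated, for every $t>0$, by a $t$-th power Hardy--Littlewood maximal function $M_t[\,\cdot\,](x)$ evaluated at any $x\in Q(y,C\mu)$, and the discrete scale family $\{2^{-n}\}_n$ may be compared with the continuous norm $\|\cdot\|_{L^q_\lambda}$ up to constants. (iii) \emph{Almost-orthogonality}: for $\eta\in\B^r$ (or $\B^r_{\lfloor\alpha\rfloor}$) and $\lambda\in(0,1]$,
\[
|\langle\psi^n_y,\eta^{\lambda}_x\rangle|\;\lesssim\; 2^{-n|\s|/2}\,\mathbf{1}_{\{\|x-y\|_\s\le C(\lambda\vee 2^{-n})\}}\times
\begin{cases}
(2^{-n}/\lambda)^{\beta}\,\lambda^{-|\s|}, & 2^{-n}\le\lambda,\\
(\lambda\, 2^{n})^{\gamma}\,2^{n|\s|}, & 2^{-n}>\lambda,
\end{cases}
\]
where $\beta\in(0,r]$ is chosen with $\alpha+\beta>0$ (Taylor-expand $\eta^\lambda_x$ at $y$, use the vanishing moments of $\psi$) and $\gamma\ge 0$ with $\gamma>\alpha$ (Taylor-expand $\psi^n_y$ at $x$, use the vanishing moments of $\eta$; one takes $\gamma=0$, i.e.\ the crude $L^\infty\!\cdot\! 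L^1$ bound, when $\alpha<0$, and $\gamma\in(\alpha,\lfloor\alpha\rfloor+1]$ when $\alpha\ge 0$). That $r>|\alpha|$ is exactly what makes the admissible ranges of $\beta,\gamma$ nonempty.

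For the direction ``$\xi\in F^\alpha_{p,q}\Rightarrow$ \eqref{waveletBounds}'' I would feed (i)--(ii) into the left-hand sides of \eqref{waveletBounds}. The $\varphi^0$-sum becomes $\big(\sum_{y\in\Lambda_0}\!\big(\sup_{\eta}|\langle\xi,\eta^{\kappa}_y\rangle|\big)^{p}\big)^{1/p}$, which, being a locally finite sum, is $\lesssim\big\|\sup_{\eta}|\langle\xi,\eta^{\kappa}_{\,\cdot}\rangle|\big\|_{L^p}$; this is controlled by the scale-$1$ term of $\|\xi\|_{F^\alpha_{p,q}}$ when $\alpha\ge 0$ and by the $\lambda\asymp 1$ portion of the $L^q_\lambda$-integral when $\alpha<0$. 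For the $\psi$-sum, after (i) the level-$n$ summand is, on the cube $Q^n_y\ni x$, comparable to $F_n(y)^q$ with $F_n(z):=\sup_{\eta}|\langle\xi,\eta^{\kappa 2^{-n}}_z\rangle|/(2^{-n})^{\alpha}$; by (ii) one has $F_n(y)\lesssim M_t[\mathcal{D}_n](x)$ for $0<t<\min(p,q)$, where $\mathcal{D}_n$ equals $F_n(y)$ on $Q^n_y$. Summing in $n$, taking $\|\cdot\|_{L^p}$, and applying the (anisotropic) vector-valued Fefferman--Stein maximal inequality — legitimate since $p/t,q/t>1$ — bounds the $\psi$-sum by $\big\|\big(\sum_n\mathcal{D}_n^{q}\big)^{1/q}\big\|_{L^p}$, which by comparison of scales is $\lesssim\|\xi\|_{F^\alpha_{p,q}}$; for $q=\infty$ the $\ell^q$-step is replaced by an $\ell^\infty$-step.

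For the converse I would insert the wavelet expansion into $\langle\xi,\eta^\lambda_x\rangle$ (with $\eta\in\B^r$, or $\eta\in\B^r_{\lfloor\alpha\rfloor}$ when $\alpha\ge 0$) and split the $\psi$-part according to $2^{-n}\le\lambda$ or $2^{-n}>\lambda$. The $\varphi^0$-part of $\sup_{\eta}|\langle\xi,\eta^\lambda_x\rangle|$ is bounded, uniformly in $\lambda$, by a locally finite sum of $|\langle\xi,\varphi^0_y\rangle|$; dividing by $\lambda^\alpha$ and taking $\|\cdot\|_{L^q_\lambda}$ is harmless because $\lambda\mapsto\lambda^{-\alpha}$ lies in $L^q((0,1],\mathrm{d}\lambda/\lambda)$ when $\alpha<0$, while for $\alpha\ge 0$ this contributes to the two terms of the $F^\alpha_{p,q}$-norm separately. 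In each wavelet regime, estimate (iii) together with the summation over the $O(1)$ (respectively $\asymp(\lambda 2^{n})^{|\s|}$) relevant lattice points $y$ reduces the contribution, after dividing by $\lambda^\alpha$, to $\sum_{m\ge 0}2^{-m(\alpha+\beta)}\,\mathcal{E}_{N+m}(x)$ for the regime $2^{-n}\le\lambda$ and to $\sum_{m\ge 1}2^{-m(\gamma-\alpha)}\,\mathcal{E}_{N-m}(x)$ for the regime $2^{-n}>\lambda$, where $2^{-N}\asymp\lambda$ and the $\mathcal{E}_n$ are local averages near $x$ of the normalised coefficients $|\langle\xi,\psi^n_y\rangle|/2^{-n(|\s|/2+\alpha)}$. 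The weights are \emph{geometrically decaying} precisely because $\alpha+\beta>0$ and $\gamma-\alpha>0$, so — since $N$ runs over $\N_0$ as $\lambda$ runs over $(0,1]$ — each expression is a discrete convolution of an $\ell^1$-weight with $(\mathcal{E}_n(x))_n$ and Young's inequality gives $\|\,\cdot\,\|_{L^q_\lambda}\lesssim\|(\mathcal{E}_n(x))_n\|_{\ell^q}$. Finally, writing $\mathcal{E}_n\lesssim M_t[\mathcal{D}_n]$ ($0<t<\min(p,q)$) with $\mathcal{D}_n$ the function equal on $Q^n_y$ to $|\langle\xi,\psi^n_y\rangle|/2^{-n(|\s|/2+\alpha)}$, the Fefferman--Stein inequality yields $\|\,\cdot\,\|_{L^p}\lesssim\big\|\big(\sum_n\mathcal{D}_n^{q}\big)^{1/q}\big\|_{L^p}$, which is the second quantity in \eqref{waveletBounds}.

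The main obstacle, I expect, is this last $L^p(L^q_\lambda)$ estimate in the converse: one must interchange, via Minkowski's inequality, the $L^q_\lambda$-norm, the geometric convolutions in the scale index, and the maximal functions in an order that keeps all exponents legal; use the device of lowering the inner integrability exponent to $t<\min(p,q)$ so that the anisotropic vector-valued Fefferman--Stein inequality applies (with a separate, easier treatment of $q=\infty$); and track the exponent arithmetic that makes the weights $2^{-m(\alpha+\beta)}$ and $2^{-m(\gamma-\alpha)}$ summable, which is the only place the hypotheses $\alpha\neq 0$ and $r>|\alpha|$ are genuinely used. A secondary nuisance is the bookkeeping when $\alpha\ge 0$: one must check that the rescaling and translation estimates for test functions respect the vanishing-moment constraint built into $\B^r_{\lfloor\alpha\rfloor}$, and that the extra zeroth-order term of the $F^\alpha_{p,q}$-norm is matched by the $\varphi^0$-sum in \eqref{waveletBounds}.
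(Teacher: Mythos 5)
Your proposal follows the same line as the paper's proof: orthonormal wavelet expansion, almost-orthogonality estimates split by whether the wavelet scale is below or above $\lambda$ (with the vanishing moments of $\psi$ used in the small-scale regime and, for $\alpha\ge 0$, the vanishing moments of $\eta\in\B^r_{\lfloor\alpha\rfloor}$ used in the large-scale regime), discretization of the scale integral, Young's inequality in the scale index, the lowering-the-exponent trick, and the vector-valued Fefferman--Stein maximal inequality. Your estimates (i)--(iii), the exponent ranges $\alpha+\beta>0$ and $\gamma-\alpha>0$, and the reason why $r>|\alpha|$ makes these ranges nonempty all match the paper's computations (the paper takes $\beta=r$, and $\gamma=0$ resp.\ $\gamma=\lfloor\alpha\rfloor+1$). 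You also sketch the ``only if'' implication, which the paper delegates to Hairer--Labb\'e.

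There is, however, a genuine (repairable) slip in your last reduction. In the small-scales regime $2^{-n}\le\lambda\asymp2^{-N}$, your $\mathcal{E}_{n}$ is the average of $\mathcal{D}_n$ over a ball of radius $\sim 2^{-N}$ (or, equivalently, the normalised sum of the $\asymp 2^{(n-N)|\s|}$ relevant coefficients), and while $\mathcal{E}_n\lesssim \mathcal{M}[\mathcal{D}_n](x)$ is immediate, the claim $\mathcal{E}_n\lesssim M_t[\mathcal{D}_n](x)$ for $t<1$ is false: by Jensen's inequality $M_t[\mathcal{D}_n]\le \mathcal{M}[\mathcal{D}_n]$, so this is an inequality in the wrong direction. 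Replacing $\sum_y a_y$ by $(\sum_y a_y^t)^{1/t}$ before converting to a maximal function costs a factor proportional to the number of cubes, giving $\mathcal{E}_n\lesssim 2^{(n-N)|\s|\frac{1-t}{t}}\,M_t[\mathcal{D}_n](x)$. That growing factor must be reabsorbed by the geometric weight, which is precisely why the paper requires $\eta$ (your $t$) close enough to $1$ that $r+\alpha+\tfrac{\eta-1}{\eta}q|\s|>0$ before forming the $\ell^1(\Z)$ weight $\theta$. You flag ``tracking the exponent arithmetic'' as the obstacle but do not observe that the maximal-function trick itself shifts this arithmetic; as stated, the step $\mathcal{E}_n\lesssim M_t[\mathcal{D}_n]$ would only be valid with $t=1$, i.e.\ for $p,q>1$ only. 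Once this factor is tracked and absorbed as in the paper, the rest of the argument goes through.
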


\begin{proof}
We first remark that the ``only if'' part of the statement follows from
the same type of considerations which are already presented in \cite{Hairer:2017}.
Therefore, let us only discuss the ``if'' assertion. In other words,
we assume that the bounds in \eqref{waveletBounds} hold, and we have to verify that these
imply that $\xi\in F^\alpha_{p,q}$. To this end, we distinguish between the cases $\alpha\geq 0$
and $\alpha<0$. Let us begin with the latter one for the case $q<\infty$. 

Following the argument in \cite{Hairer:2017}, we show that the series 
\begin{align}\label{wavDecomp}
\sum_{y\in\Lambda_0}\langle\varphi^0_y,\eta^{\lambda}_x\rangle\langle\xi,\varphi^0_y\rangle +
\sum_{\psi\in\Psi}\sum_{n\geq 0}\sum_{y\in\Lambda_n}
\langle\psi^n_y,\eta^{\lambda}_x\rangle\langle\xi,\psi^n_y\rangle
\end{align}
converges absolutely, and by bounding the two contributions separately, we will
eventually obtain the required bound for $\langle\xi,\eta^{\lambda}_x\rangle$.
To this end, the first term incorporating the scaling function $\varphi$ is treated exactly as in \cite{Hairer:2017},
which also explains the occurrence of the first bound in \eqref{waveletBounds}.
In particular, we only discuss the second term in detail. Using the triangle inequality for 
the sum over $\psi\in\Psi$ this amounts to bound the term
\begin{align}\label{aux1}
    \bigg\|\bigg(\sum_{n_0=0}^\infty\bigg\|\sup_{\eta\in\mathfrak{B}^r(\R^d)}
    \frac{\sum_{n\geq 0}\sum_{y\in\Lambda_n}|\langle\psi^n_y,\eta^{\lambda}_x\rangle||\langle\xi,\psi^n_y\rangle|}
    {\lambda^\alpha}\bigg\|_{L^q_{\lambda,n_0}}^q\bigg)^\frac{1}{q}\bigg\|_{L^p}
\end{align}
where we introduced the shortcut $L^q_{\lambda,n_0}$ to denote $L^q((2^{-(n_0+1)},2^{-n_0}],\lambda^{-1}\mathrm{d}\lambda)$.
In addition, let $M$ denote the smallest integer such that $M$ is greater or equal to the maximum sizes
of the supports of $\varphi$ and $\psi\in\Psi$. For fixed $n_0\in\N_0$, we separate the sum over $n\geq 0$ 
into terms corresponding to $n<n_0$ and $n\geq n_0$, respectively. For such fixed $n_0\in\N_0$, we then note that
(cf.\ \cite{Hairer:2017})
\begin{align}\label{largeScales}
    |\langle\psi^n_y,\eta^{\lambda}_x\rangle| \lesssim 2^{n\frac{|\s|}{2}}
    \chi_{Q(x,\lambda+M2^{-n})}(y),
\end{align}
being valid uniformly over all $\psi\in\Psi$, all $x\in\R^d$, all $n<n_0$,
all $y\in\Lambda_n$, all $\lambda\in (2^{-(n_0+1)},2^{-n_0}]$
and all $\eta\in\mathfrak{B}^r(\R^d)$. Furthermore, it holds
\begin{align}\label{smallScales}
    |\langle\psi^n_y,\eta^{\lambda}_x\rangle| \lesssim 
    2^{-(n-n_0)(\frac{|\s|}{2}+r)}2^{n_0\frac{|\s|}{2}}\chi_{Q(x,\lambda+M2^{-n})}(y),
\end{align}
uniformly over all $\psi\in\Psi$, all $x\in\R^d$, all $n\geq n_0$,
all $y\in\Lambda_n$, all $\lambda\in (2^{-(n_0+1)},2^{-n_0}]$
and all $\eta\in\mathfrak{B}^r(\R^d)$. 

Now, by means of \eqref{largeScales} we obtain
\begin{align*}
    \bigg\|\sup_{\eta\in\mathfrak{B}^r(\R^d)}\sum_{n<n_0}&\sum_{y\in\Lambda_n}
    \frac{|\langle\psi^n_y,\eta^{\lambda}_x\rangle||\langle\xi,\psi^n_y\rangle|}{\lambda^\alpha}\bigg\|_{L^q_{\lambda,n_0}}^q \\
    &\lesssim\sum_{n<n_0}2^{\alpha(n_0-n)}\bigg|\sum_{\substack{y\in\Lambda_n \\ \|x-y\|_\s \leq 2^{-n}(M+1)}}
    \frac{|\langle\xi,\psi^n_y\rangle|}{2^{-n(\frac{|\s|}{2}+\alpha)}}\bigg|^q.
\end{align*}
Here, we used Jensen's inequality on the sum over $n<n_0$. In addition, using \eqref{smallScales} yields the bound
\begin{align*}
    \bigg\|\sup_{\eta\in\mathfrak{B}^r(\R^d)}&\sum_{n\geq n_0}\sum_{y\in\Lambda_n}
    \frac{|\langle\psi^n_y,\eta^{\lambda}_x\rangle||\langle\xi,\psi^n_y\rangle|}{\lambda^\alpha}\bigg\|_{L^q_{\lambda,n_0}}^q \\
    &\lesssim\sum_{n\geq n_0}2^{-(n-n_0)(r+\alpha)}\bigg|
    \sum_{\substack{y\in\Lambda_n \\ \|x-y\|_\s\leq 2^{-{n_0}}(M+1)}}2^{(n_0-n)|\s|}
    \frac{|\langle\xi,\psi^n_y\rangle|}{ 2^{-n(\frac{|\s|}{2}+\alpha)}}\bigg|^q.
\end{align*}
This time, we exploited Jensen's inequality on the sum over $n\geq n_0$.
The next step now consists of appropriately estimating the restricted 
sums over $y\in\Lambda_n$ in this last two bounds.

We begin with the large scales regime $n<n_0$. Recall the notation $Q^n_y$. Let $\Lambda_n^{M,x}$
denote the smallest subset of $\Lambda_n$ such that the cube $Q(x,2^{-n}(M{+}2))$ is covered
by the disjoint union of cubes $Q^n_y$ with $y\in\Lambda_n^{M,x}$. Note that $\Lambda_n^{M,x}$
contains the set of all $y\in\Lambda_n$ such that $\|y-x\|_\s\leq 2^{-n}(M{+}1)$. Hence,
for any $\eta\in (0,1)$ we obtain due to the monotonicity of $\ell^s$-norms that
\begin{align*}
    \bigg|\sum_{\substack{y\in\Lambda_n \\ \|x-y\|_\s \leq 2^{-n}(M+1)}}
    \frac{|\langle\xi,\psi^n_y\rangle|}{2^{-n(\frac{|\s|}{2}+\alpha)}}\bigg|^\eta &\leq
    \sum_{\substack{y\in\Lambda_n \\ \|x-y\|_\s \leq 2^{-n}(M+1)}}
    \frac{|\langle\xi,\psi^n_y\rangle|^\eta}{2^{-n(\frac{|\s|}{2}+\alpha)\eta}} \\
    &\lesssim \mint{-}_{Q(x,2^{-n}(M{+}2))}\bigg|\sum_{y\in\Lambda_n^{M,x}}
    \frac{|\langle\xi,\psi^n_y\rangle|}{2^{-n(\frac{|\s|}{2}+\alpha)}}\chi^n_y(z)\bigg|^\eta\,\mathrm{d}z \\
    &\leq\mathcal{M}\Big(\Big|\sum_{y\in\Lambda_n^{M,x}}
    \frac{|\langle\xi,\psi^n_y\rangle|}{2^{-n(\frac{|\s|}{2}+\alpha)}}\chi^n_y\Big|^\eta\Big)(x),
\end{align*}
where $\mathcal{M}f$ denotes the Hardy--Littlewood maximal function for a 
locally integrable function $f\in L^1_{\mathrm{loc}}(\R^d)$.
Due to the additional factor $2^{(n_0-n)|\s|}$, an analogous type of argument shows for
the small scales regime $n\geq n_0$ that
\begin{align*}
    \sum_{\substack{y\in\Lambda_n \\ \|x-y\|_\s\leq 2^{-{n_0}}(M+1)}}&2^{(n_0-n)|\s|}
    \frac{|\langle\xi,\psi^n_y\rangle|}{ 2^{-n(\frac{|\s|}{2}+\alpha)}} \\
    &\lesssim2^{-(n-n_0)\frac{\eta-1}{\eta}|\s|}\bigg\{\mathcal{M}\Big(\Big|\sum_{y\in\Lambda_n^{M,x}}
    \frac{|\langle\xi,\psi^n_y\rangle|}{2^{-n(\frac{|\s|}{2}+\alpha)}}\chi^n_y\Big|^\eta\Big)(x)\bigg\}^\frac{1}{\eta},
\end{align*}
which again holds true for every $\eta\in (0,1)$. Of course, this time one writes $\Lambda_n^{M,x}$
to denote the smallest subset of $\Lambda_n$ such that the cube $Q(x,2^{-n_0}(M{+}2))$ is covered
by the disjoint union of cubes $Q^n_y$ with $y\in\Lambda_n^{M,x}$.

Let us summarize what we have achieved so far. Taking together all of our bounds, we see that
the term in \eqref{aux1} is bounded by
\begin{align}\label{aux2}
    \bigg\|\bigg(\sum_{n_0=0}^\infty\sum_{n=0}^\infty\theta(n{-}n_0)
    \bigg\{\mathcal{M}\Big(\Big|\sum_{y\in\Lambda_n}\frac{|\langle\xi,\psi^n_y\rangle|}{2^{-n(\frac{|\s|}{2}+\alpha)}}
    \chi^n_y\Big|^\eta\Big)(x)\bigg\}^\frac{q}{\eta}\bigg)^\frac{1}{q}\bigg\|_{L^p},
\end{align}
where we introduced the sequence $(\theta(z))_{z\in\Z}$ by
\begin{align*}
    \theta(z) = \begin{cases}
                2^{-z(r+\alpha+\frac{\eta-1}{\eta}q|\s|)}, & z\geq 0, \\
                2^{-z\alpha},     & z < 0.
                \end{cases}
\end{align*}
Choosing $0<\eta<1$ such that $r+\alpha+\frac{\eta{-}1}{\eta}q|\s|>0$, it follows
$\|\theta\|_{\ell^1(\Z)}<\infty$. In particular, due to Young's inequality for convolutions
the quantity from \eqref{aux2} can be estimated by
\begin{align*}
    \bigg\|\bigg(\sum_{n_0=0}^\infty\bigg\{\mathcal{M}\Big(\Big|\sum_{y\in\Lambda_{n_0}}
    \frac{|\langle\xi,\psi^{n_0}_y\rangle|}{2^{-n_0(\frac{|\s|}{2}+\alpha)}}
    \chi^{n_0}_y\Big|^\eta\Big)(x)\bigg\}^\frac{q}{\eta}\bigg)^\frac{1}{q}\bigg\|_{L^p}.
\end{align*}
Next, since we have $\eta < p\wedge q$ we can also apply the 
Fefferman--Stein maximal inequality (cf.\ \cite{Fefferman:1971})
to bound this by
\begin{align*}
    \bigg\|\bigg(\sum_{n\geq 0}\sum_{y\in\Lambda_n}
    \frac{|\langle\xi,\psi^n_y\rangle|^q}{2^{-n(\frac{|\s|}{2}+\alpha)q}}
    \chi_y^n(x)\bigg)^\frac{1}{q}\bigg\|_{L^p},
\end{align*}
which is finite by assumption, i.e.\ $\xi\in F^\alpha_{p,q}$ as asserted.

Let us discuss briefly the case $q=\infty$. We also still assume that $\alpha < 0$. Then, a similar
argument as above shows that
\begin{align*}
    \bigg\|\sup_{\lambda\in (0,1]}\sup_{\eta\in\mathfrak{B}^r(\R^d)}
    \frac{|\langle\xi,\eta_x^{\lambda}\rangle|}{\lambda^\alpha}\bigg\|_{L^p}
    &\lesssim\bigg\|\sup_{n\in\N_0}\bigg\{\mathcal{M}\Big(\Big|\sum_{y\in\Lambda_n}
    \frac{|\langle\xi,\psi^n_y\rangle|}{2^{-n(\frac{|\s|}{2}+\alpha)}}
    \chi^n_y\Big|^\eta\Big)(x)\bigg\}^\frac{1}{\eta}\bigg\|_{L^p},
\end{align*}
where $\eta\in (0,1)$. Again, it remains to make use of the vector-valued maximal inequality, i.e.\ we obtain
\begin{align*}
    \bigg\|\sup_{\lambda\in (0,1]}\sup_{\eta\in\mathfrak{B}^r(\R^d)}
    \frac{|\langle\xi,\eta_x^{\lambda}\rangle|}{\lambda^\alpha}\bigg\|_{L^p}
    &\lesssim\bigg\|\sup_{n\in\N_0}\sum_{y\in\Lambda_n}\frac{|\langle\xi,\psi^n_y\rangle|}
    {2^{-n(\frac{|\s|}{2}+\alpha)}}\chi^n_y(x)\bigg\|_{L^p},
\end{align*}
which concludes the proof for the case $q=\infty$.

Now, we move on to the discussion of the case $\alpha\geq 0$. Recall from the definition of the
spaces $F^\alpha_{p,q}$ that this amounts to bound the two quantities from \eqref{TL2}. With
respect to the latter one, the argument works verbatim as the one presented above except that
one has to replace the bound in \eqref{largeScales} with
\begin{align}\label{LargeScales}
    |\langle\psi^n_y,\eta^{\lambda}_x\rangle| \lesssim 
    2^{n\frac{|\s|}{2}+\lfloor\alpha\rfloor+1}\lambda^{\lfloor\alpha\rfloor+1}
    \chi_{Q(x,\lambda+M2^{-n})}(y),
\end{align}
which holds uniformly over all $\psi\in\Psi$, all $x\in\R^d$, all $n<n_0$,
all $y\in\Lambda_n$, all $\lambda\in (2^{-(n_0+1)},2^{-n_0}]$
and all $\eta\in\mathfrak{B}^r_{\lfloor\alpha\rfloor}(\R^d)$. Therefore,
we only deal with the first quantity from \eqref{TL2}. To this end,
we again resort to the wavelet decomposition from \eqref{wavDecomp}---this time
with $\lambda=1$. The sum involving the scaling
function $\varphi$ can be bounded by the same arguments presented in \cite{Hairer:2017}.
Hence, we briefly discuss the sum involving the functions $\psi\in\Psi$.

Note that this time there is no integration in $\lambda\in (0,1]$.
In other words, we can use the arguments presented above, formally substituting
$\lambda = 1$, i.e.\ $n_0=0$. In particular, it is readily seen
that the following bound holds (with the obvious modification in the case $q=\infty$):
\begin{align*}
    \bigg\|\sup_{\eta\in\mathfrak{B}^r(\R^d)}&\sum_{n\geq 0}\sum_{y\in\Lambda_n}
    |\langle\xi,\psi^n_y\rangle||\langle\psi^n_y,\eta_x\rangle|\bigg\|_{L^p} \\
    &\lesssim\bigg\|\sum_{n\geq 0}2^{-n(r+\alpha+\frac{\eta{-}1}{\eta}|\s|)}
    \bigg\{\mathcal{M}\Big(\Big|\sum_{y\in\Lambda_n}\frac{|\langle\xi,\psi^n_y\rangle|}{2^{-n(\frac{|\s|}{2}+\alpha)}}
    \chi^n_y\Big|^\eta\Big)(x)\bigg\}^\frac{1}{\eta}\bigg\|_{L^p} \\
    &\lesssim\bigg\|\bigg(\sum_{n\geq 0}
    \bigg\{\mathcal{M}\Big(\Big|\sum_{y\in\Lambda_n}\frac{|\langle\xi,\psi^n_y\rangle|}{2^{-n(\frac{|\s|}{2}+\alpha)}}
    \chi^n_y\Big|^\eta\Big)(x)\bigg\}^\frac{q}{\eta}\bigg)^\frac{1}{q}\bigg\|_{L^p}.
\end{align*}
In the last step, we made use of H\"{o}lder's inequality, and that one can choose $0<\eta<1$ such that $r+\alpha+\frac{\eta{-}1}{\eta}|\s|>0$
Hence, we can conclude the proof of the ``if'' assertion by another application of the Fefferman--Stein maximal inequality.
\end{proof}

Consider now a sequence $(\xi_n)_{n\in\N_0}$ given by
\begin{align*}
    \xi_n = \sum_{y\in\Lambda_n} A^n_y\varphi^n_y,
\end{align*}
and define the quantities $\delta A^n_y = \langle\xi_{n+1}-\xi_n,\varphi^n_y\rangle$.
For the proof of the reconstruction theorem, the following convergence criterion in $F^\alpha_{p,q}$
for the sequence $(\xi_n)_{n\in\N}$ will be of importance. Of course, this criterion is the 
counterpart to the corresponding assertion in \cite[Prop 3.7]{Hairer:2017}. 

\begin{proposition}\label{proposition:characterisation_convergence}
Let $\gamma>0$, $\alpha < 0$, $1\leq p <\infty$ and $1\leq q \leq \infty$. 
Furthermore, we assume that the following two bounds hold:
\begin{align}\label{convCrit1}
\bigg\| \sup_{n\geq 0} \sum_{y\in\Lambda_n} \frac{|A^n_y|} 
{2^{-n(\alpha+\frac{|\s|}{2})}}\chi^n_y(x) \bigg\|_{L^p} &< \infty, \\\label{convCrit2}
\bigg\|\bigg(\sum_{n\geq 0}\Big|\sum_{y\in\Lambda_n}
\frac{|\delta A^n_y|}{2^{-n(\gamma+\frac{|\s|}{2})}}\chi^n_y(x)
\Big|^q\bigg)^\frac{1}{q}\bigg\|_{L^p} &< \infty.
\end{align}
Then, for every $\bar{\alpha}<\alpha$, the sequence $(\xi_n)_{n\in\N}$ converges in $F^{\bar{\alpha}}_{p,q}$.
Moreover, if $q=\infty$ the limit distribution is actually an element of $F^\alpha_{p,q}$.
\end{proposition}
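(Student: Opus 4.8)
The plan is to show that $(\xi_n)_n$ is Cauchy in $F^{\bar\alpha}_{p,q}$ — hence convergent, by completeness — by estimating the wavelet coefficients of the increments $\zeta_m:=\xi_{m+1}-\xi_m$ through Proposition~\ref{prop:waveletBounds}, and then, when $q=\infty$, to verify that the limit already obeys the sharper wavelet bounds with exponent $\alpha$. The starting point is that $\zeta_m\in V_{m+1}$. Decomposing along $V_{m+1}=V_m\oplus W_m$, with $W_m$ the orthogonal complement of $V_m$ in $V_{m+1}$ (spanned by the $\psi^m_y$, $\psi\in\Psi$, $y\in\Lambda_m$), and using $\xi_m\in V_m$, one finds that the $V_m$–part of $\zeta_m$ is $\sum_{y\in\Lambda_m}\delta A^m_y\,\varphi^m_y$, while its $W_m$–part is $\sum_{\psi,y}b^{m,\psi}_y\,\psi^m_y$ with $b^{m,\psi}_y:=\langle\zeta_m,\psi^m_y\rangle=\langle\xi_{m+1},\psi^m_y\rangle=\sum_{y'\in\Lambda_{m+1}}A^{m+1}_{y'}\langle\varphi^{m+1}_{y'},\psi^m_y\rangle$. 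Thus the ``new detail'' $b^{m,\psi}$ of the approximation is governed by $A^{m+1}$, hence by \eqref{convCrit1}, whereas the $\delta A^m$–part is governed by \eqref{convCrit2}. Since $\varphi^0_y\in V_0\subseteq V_m$ and $\psi^n_y\in W_n\subseteq V_m$ whenever $n<m$, the coarser coefficients of $\zeta_m$ are the \emph{leakage} coefficients $\langle\zeta_m,\psi^n_y\rangle=\sum_{y'\in\Lambda_m}\delta A^m_{y'}\langle\varphi^m_{y'},\psi^n_y\rangle$ ($n<m$) and $\langle\zeta_m,\varphi^0_y\rangle=\sum_{y'}\delta A^m_{y'}\langle\varphi^m_{y'},\varphi^0_y\rangle$, while $\langle\zeta_m,\psi^n_y\rangle=0$ for $n>m$. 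Telescoping $\xi_N-\xi_M=\sum_{m=M}^{N-1}\zeta_m$ then produces, at scale $n$: $0$ if $n\geq N$; the leakage sum over $M\leq m\leq N-1$ if $n<M$; and $b^{n,\psi}_y$ plus the leakage sum over $n<m\leq N-1$ if $M\leq n\leq N-1$ (with the analogous formula for the $\varphi^0$–coefficients).

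The engine of the estimates is the bound $|\langle\varphi^m_{y'},\psi^n_y\rangle|\lesssim 2^{-(m-n)|\s|/2}\chi_{\{\|y'-y\|_\s\lesssim 2^{-n}\}}(y')$ for $m\geq n$ (and likewise against $\varphi^0_y$), which follows merely from the supports and the $L^2$–normalisation — crucially, no extra decay in $m$ is available, since $\varphi$ carries no cancellation, so the gain in $m$ will have to come from the weight $2^{-m\gamma}$ in \eqref{convCrit2}. Raising the restricted $y'$–sums to a power $\eta\in(0,1)$, using monotonicity of $\ell^s$–norms and recognising the ensuing local averages as Hardy--Littlewood maximal functions — exactly as in the proof of Proposition~\ref{prop:waveletBounds} — I would obtain $\sum_y\bigl|\sum_{y'}\delta A^m_{y'}\langle\varphi^m_{y'},\psi^n_y\rangle\bigr|\chi^n_y(x)\lesssim 2^{-m(\gamma+|\s|/2)}\,2^{(m-n)|\s|(\frac1\eta-\frac12)}\{\mathcal{M}(|h^{(m)}|^\eta)(x)\}^{1/\eta}$, where $h^{(m)}:=\sum_{y'}\tfrac{|\delta A^m_{y'}|}{2^{-m(\gamma+|\s|/2)}}\chi^m_{y'}$, and likewise $\sum_y|b^{n,\psi}_y|\chi^n_y(x)\lesssim 2^{-n(\alpha+|\s|/2)}\{\mathcal{M}(|G^{(n+1)}|^\eta)(x)\}^{1/\eta}$ with $G^{(m)}:=\sum_{y'}\tfrac{|A^m_{y'}|}{2^{-m(\alpha+|\s|/2)}}\chi^m_{y'}$. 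Since $\gamma>0$ and $p,q\geq1$, I fix $\eta\in\bigl(\tfrac{|\s|}{|\s|+\gamma},1\bigr)$: then $|\s|(\tfrac1\eta-\tfrac12)<\gamma+\tfrac{|\s|}{2}$, which makes the $m$–sums below geometrically convergent, and $\eta<p\wedge q$, which licenses the Fefferman--Stein inequality.

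Feeding this into the wavelet norm of $\xi_N-\xi_M$ (Proposition~\ref{prop:waveletBounds}), the leakage contributions are handled by substituting $k=m-n$: the weight $2^{-n(\bar\alpha+|\s|/2)}2^{-m(\gamma+|\s|/2)+(m-n)|\s|(\frac1\eta-\frac12)}$ then factorises, the $m$–sum is a convergent geometric series with a prefactor decaying in $M$, and summing the residual $\ell^q_n$–weights followed by Hardy--Littlewood/Fefferman--Stein and \eqref{convCrit2} gives a bound $\to0$ as $M\to\infty$ (this uses only $\bar\alpha<\gamma$, automatic since $\bar\alpha<\alpha<0<\gamma$). The $\varphi^0$–part is even easier: $\bigl(\sum_{y\in\Lambda_0}|\langle\xi_N-\xi_M,\varphi^0_y\rangle|^p\bigr)^{1/p}\lesssim\sum_{m\geq M}2^{-m\gamma}\|h^{(m)}\|_{L^p}\lesssim 2^{-M\gamma}\sup_m\|h^{(m)}\|_{L^p}\to0$ by \eqref{convCrit2}. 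The genuinely new term is the $b$–term, present only at scales $n\geq M$, where $\sum_y|b^{n,\psi}_y|\chi^n_y(x)/2^{-n(\bar\alpha+|\s|/2)}\lesssim 2^{n(\bar\alpha-\alpha)}\{\mathcal{M}(|G^{(n+1)}|^\eta)(x)\}^{1/\eta}$; the \emph{strict} gap $\bar\alpha-\alpha<0$ provides a factor $2^{M(\bar\alpha-\alpha)}$, and Fefferman--Stein with \eqref{convCrit1} makes this contribution vanish as $M\to\infty$. This is the step responsible for the loss $\bar\alpha<\alpha$. (If $|\bar\alpha|\geq r$ one first proves convergence in $F^{\bar\alpha_0}_{p,q}$ for some $\bar\alpha_0\in(\max(\bar\alpha,-r),\alpha)$ and uses $F^{\bar\alpha_0}_{p,q}\hookrightarrow F^{\bar\alpha}_{p,q}$.) Finally, for $q=\infty$ the bound \eqref{convCrit1} is itself an $\ell^\infty_n$–estimate, so no regularity need be spent: rerunning the same computations with $\bar\alpha$ replaced by $\alpha$ over the full scale range shows that the wavelet coefficients of the limit — which, by the convergence already obtained in $F^{\bar\alpha}_{p,\infty}$ and the ``only if'' part of Proposition~\ref{prop:waveletBounds}, are the coefficientwise limits of those of $\xi_N$ — satisfy \eqref{waveletBounds} with exponent $\alpha$ (the $b$–term is bounded by $\|\sup_m|G^{(m)}|\|_{L^p}<\infty$, the leakage term by $\|\sup_m|h^{(m)}|\|_{L^p}<\infty$, using $\alpha<\gamma$), so the limit lies in $F^\alpha_{p,\infty}$.

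I expect the main obstacle to be the leakage of the $V_m$–projections of the increments into all coarser scales $n<m$: there is no cancellation, so the entire scale gain has to be extracted from the $2^{-m\gamma}$ weight in \eqref{convCrit2}, which — as in the proof of Proposition~\ref{prop:waveletBounds} — requires the $\eta$–trick and the Fefferman--Stein maximal inequality, with the admissible range of $\eta$ (and hence the whole argument) hinging on $\gamma>0$. A subtler point, easy to overlook, is the separate treatment of the ``detail'' coefficients $b^{m,\psi}$, controlled only by \eqref{convCrit1}: these are precisely what obstructs convergence at the endpoint regularity $\alpha$ when $q<\infty$, and precisely why no obstruction remains when $q=\infty$.
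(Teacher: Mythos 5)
Your proposal is correct and follows essentially the same route as the paper: the decomposition $\xi_{m+1}-\xi_m$ along $V_{m+1}=V_m\oplus V_m^\perp$, the inner-product bound $|\langle\varphi^m_{y'},\psi^n_y\rangle|\lesssim 2^{-(m-n)|\s|/2}$, the $\eta$-trick converting restricted lattice sums into Hardy--Littlewood maximal functions, and the Fefferman--Stein inequality are all exactly the paper's tools, with the same source of the loss $\bar\alpha<\alpha$ (the ``detail'' $V_m^\perp$-coefficients governed only by \eqref{convCrit1}) and the same reason it disappears when $q=\infty$. The only differences are cosmetic — you compute the wavelet coefficients of the telescoped difference $\xi_N-\xi_M$ scale by scale rather than separately bounding $\sum g_n$ and $\sum\delta\xi_n$ as the paper does, and your $\varphi^0$-coefficient bound should carry $2^{-m\bar\gamma}$ rather than $2^{-m\gamma}$ — but these do not affect the substance.
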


\begin{proof}
We follow the strategy of Hairer and Labb\'{e} \cite{Hairer:2017}, adapting some arguments as already seen
in the wavelet characterization of the space $F^\alpha_{p,q}$. In particular, we will again
make use of the Fefferman--Stein maximal inequality. But first, we decompose
\begin{align*}
    \xi_{n+1}-\xi_n = g_n + \delta\xi_n \in V_n\oplus V_n^\perp = V_{n+1},
\end{align*}
where
\begin{align*}
    g_n = \sum_{z\in\Lambda_n}\delta A^n_z\varphi^n_z,\quad
    \delta\xi_n = \sum_{z\in\Lambda_n}\Big(\sum_{u\in\Lambda_{n+1}}
    A^{n+1}_u\langle\varphi^{n+1}_u,\psi^n_z\rangle\Big)\,\psi^n_z.
\end{align*}
Now, fix positive integers $k\leq K$. We aim to bound the wavelet norm of the sums $\sum_{n=k}^K g_n$
and $\sum_{n=k}^K \delta\xi_n$ in $F^{\bar{\alpha}}_{p,q}$, for every $\bar{\alpha}<\alpha$.
Let us treat first the contributions coming from $\delta\xi_n$ (with an obvious modification
of the argument for the case $q=\infty$). To this end, note first that
\begin{align*}
    \bigg\|\bigg(\sum_{m\geq 0}&\sum_{y\in\Lambda_m}
    \frac{|\langle\sum_{n=k}^K\delta\xi_n,\psi^m_y\rangle|^q}
    {2^{-m(\bar{\alpha}+\frac{|\s|}{2})q}}\chi^m_y(x)\bigg)^\frac{1}{q}\bigg\|_{L^p} \\
    &=\bigg\|\bigg(\sum_{k\leq n\leq K}\sum_{y\in\Lambda_n}
    \frac{|\sum_{z\in\Lambda_{n+1}}A^{n+1}_z\langle\varphi^{n+1}_z,\psi^n_y\rangle|^q}
    {2^{-n(\bar{\alpha}+\frac{|\s|}{2})q}}\chi^n_y(x)\bigg)^\frac{1}{q}\bigg\|_{L^p}.
\end{align*}
Then, we use that $|\langle\varphi^{n+1}_z,\psi^n_y\rangle|\lesssim 2^{-|\s|/2}$
uniformly over the respective parameters. Actually, the left hand side vanishes whenever
$\|y-z\|_\s>(2M)2^{-n}$. Hence, we deduce that
\begin{align*}
    \bigg\|&\bigg(\sum_{k\leq n\leq K}\sum_{y\in\Lambda_n}
    \frac{|\sum_{z\in\Lambda_{n+1}}A^{n+1}_z\langle\varphi^{n+1}_z,\psi^n_y\rangle|^q}
    {2^{-n(\bar{\alpha}+\frac{|\s|}{2})q}}\chi^n_y(x)\bigg)^\frac{1}{q}\bigg\|_{L^p} \\
    &\hspace{1cm}\lesssim\bigg\|\bigg(\sum_{k\leq n\leq K}\sum_{y\in\Lambda_n}
    \Big|\sum_{\substack{z\in\Lambda_{n+1} \\ \|z-x\|_\s\leq C2^{-n}}}
    2^{-|\s|}\frac{|A^{n+1}_z|}{2^{-(n+1)(\bar{\alpha}+\frac{|\s|}{2})}}
    \Big|^q\chi^n_y(x)\bigg)^\frac{1}{q}\bigg\|_{L^p},
\end{align*}
with $C=2M+1$. Now, due to the factor $2^{-|\s|}$ we can proceed similarly as in the proof
of the wavelet characterization to obtain (with $0<\eta<1$)
\begin{align*}
    \bigg\|\bigg(\sum_{k\leq n\leq K}\sum_{y\in\Lambda_n}&
    \Big|\sum_{\substack{z\in\Lambda_{n+1} \\ \|z-x\|_\s\leq C2^{-n}}}
    2^{-|\s|}\frac{|A^{n+1}_z|}{2^{-(n+1)(\bar{\alpha}+\frac{|\s|}{2})}}
    \Big|^q\chi^n_y(x)\bigg)^\frac{1}{q}\bigg\|_{L^p} \\
    &\lesssim\bigg\|\bigg(\sum_{k\leq n\leq K}\bigg\{\mathcal{M}\Big(\Big|\sum_{z\in\Lambda_{n+1}}
    \frac{|A^{n+1}_z|}{2^{-(n+1)(\bar{\alpha}+\frac{|\s|}{2})}}\chi^n_z\Big|^\eta\Big)(x)
    \bigg\}^\frac{q}{\eta}\bigg)^\frac{1}{q}\bigg\|_{L^p} \\
    &\lesssim\bigg\|\bigg(\sum_{k\leq n\leq K}\Big|\sum_{z\in\Lambda_{n+1}}
    \frac{|A^{n+1}_z|}{2^{-(n+1)(\bar{\alpha}+\frac{|\s|}{2})}}\chi^n_z(x)\Big|^q
    \bigg)^\frac{1}{q}\bigg\|_{L^p},
\end{align*}
where the last step, of course, follows from the Fefferman--Stein maximal inequality.
From this, we immediately infer that
\begin{align*}
    \bigg\|\bigg(\sum_{m\geq 0}\sum_{y\in\Lambda_m}&
    \frac{|\langle\sum_{n=k}^K\delta\xi_n,\psi^m_y\rangle|^q}
    {2^{-m(\bar{\alpha}+\frac{|\s|}{2})q}}\chi^m_y(x)\bigg)^\frac{1}{q}\bigg\|_{L^p} \\ 
    &\lesssim 2^{-k(\alpha-\bar{\alpha})}\bigg\| \sup_{n\geq 0} \sum_{y\in\Lambda_n} 
    \frac{|A^n_y|} {2^{-n(\alpha+\frac{|\s|}{2})}} \chi^n_y(x) \bigg\|_{L^p}.
\end{align*}

In the second step we treat the contributions from the $g_n$. To this end, we only discuss
the bound related to the second quantity in \eqref{waveletBounds}. As a matter of fact,
the bound related to the first quantity in \eqref{waveletBounds} follows from similar
considerations, which is why we refrain from providing details.
Moreover, the case $q = \infty$ again follows from an obvious modification of the argument 
for the case $q < \infty$. For this reason, we will only treat the latter one in detail.
First of all, we have
\begin{align*}
    \bigg\|\bigg(\sum_{m\geq 0}&\sum_{y\in\Lambda_m}
    \frac{|\langle\sum_{n=k}^K g_n,\psi^m_y\rangle|^q}{2^{-m(\bar{\alpha}+\frac{|\s|}{2})q}}
    \chi^m_y(x)\bigg)^\frac{1}{q}\bigg\|_{L^p} \\
    &\leq\bigg\|\bigg(\sum_{m\geq 0}\sum_{y\in\Lambda_m}\bigg(\sum_{(m{+}1)\vee k\leq n\leq K}\sum_{z\in\Lambda_n}
    \frac{|\delta A^n_z||\langle\varphi^n_z,\psi^m_y\rangle|}{2^{-m(\bar{\alpha}+\frac{|\s|}{2})}}
    \bigg)^q\chi^m_y(x)\bigg)^\frac{1}{q}\bigg\|_{L^p}.
\end{align*}
Then, observe that $|\langle\varphi^n_z,\psi^m_y\rangle|\lesssim 2^{-(n-m)\frac{|\s|}{2}}$
which holds uniformly over all the respective parameters---in particular for all $m<n$. 
This time, the left hand side vanishes as soon as $\|z-y\|_\s > (2M)2^{-m}$. 
In addition, let us choose $0<\eta<1$ such that one has $\bar{\gamma}:=\gamma+\frac{\eta{-}1}{\eta}|\s|>0$.
We also put $\alpha^\prime := \bar{\alpha}+\frac{\eta{-}1}{\eta}|\s|<0$. Then, we obtain the bound
\begin{align*}
    &\bigg(\sum_{(m{+}1)\vee k\leq n\leq K}\sum_{z\in\Lambda_n}
    \frac{|\delta A^n_z||\langle\varphi^n_z,\psi^m_y\rangle|}
    {2^{-m(\bar{\alpha}+\frac{|\s|}{2})}}\bigg)^q\chi^m_y(x) \\
    &\hspace{0.5cm}\lesssim 2^{\bar{\alpha}qm}\bigg(
    \sum_{(m{+}1)\vee k\leq n\leq K}2^{-n\gamma}
    \sum_{\substack{z\in\Lambda_n \\ \|z-y\|_\s\leq (2M)2^{-m}}}
    2^{(m-n)|\s|}\frac{|\delta A^n_z|}{2^{-n(\gamma+\frac{|\s|}{2})}}\bigg)^q\chi_y^m(x) \\
    &\hspace{0.5cm}\lesssim 2^{\alpha^\prime qm}
    \sum_{(m{+}1)\vee k\leq n\leq K}2^{-n\bar{\gamma}}\bigg(
    \sum_{\substack{z\in\Lambda_n \\ \|z-x\|_\s\leq C2^{-m}}}
    2^{(m-n)|\s|}\frac{|\delta A^n_z|^\eta}{2^{-n(\gamma+\frac{|\s|}{2})\eta}}
    \bigg)^\frac{q}{\eta}\chi_y^m(x),
\end{align*}
where the last line is a consequence of the monotonicity of $\ell^s$-norms followed by 
an application of Jensen's inequality. From this, we can infer that
\begin{align*}
     \bigg\|\bigg(&\sum_{m\geq 0}\sum_{y\in\Lambda_m}
    \frac{|\langle\sum_{n=k}^K g_n,\psi^m_y\rangle|^q}{2^{-m(\bar{\alpha}+\frac{|\s|}{2})q}}
    \chi^m_y(x)\bigg)^\frac{1}{q}\bigg\|_{L^p} \\
    &\lesssim 2^{-k\bar{\gamma}}\bigg\|\bigg(\sum_{m\geq 0}\sum_{n\geq m}2^{\alpha^\prime qm}
    \bigg|\mint{-}_{Q(x,C^\prime 2^{-m})}\Big|\sum_{z\in\Lambda_n}
    \frac{|\delta A^n_z|^\eta}{2^{-n(\gamma+\frac{|\s|}{2})\eta}}\chi^n_z(u)
    \Big|^\eta\,\mathrm{d}u\bigg|^\frac{q}{\eta}\bigg)^\frac{1}{q}\bigg\|_{L^p} \\
    &\lesssim 2^{-k\bar{\gamma}}\bigg\|\bigg(\sum_{n\geq 0}
    \bigg\{\mathcal{M}\Big(\Big|\sum_{z\in\Lambda_n}
    \frac{|\delta A^n_z|^\eta}{2^{-n(\gamma+\frac{|\s|}{2})\eta}}\chi^n_z
    \Big|^\eta\Big)(x)\bigg\}^\frac{q}{\eta}\bigg)^\frac{1}{q}\bigg\|_{L^p}.
\end{align*}
Here, we in particular made use of $\sum_{m\geq 0}2^{\alpha^\prime qm} \sim 1$ due to $\alpha^\prime < 0$.
As it is by now routine, we can proceed from this point with the help of the Fefferman--Stein maximal 
inequality to deduce that
\begin{align*}
    \bigg\|\bigg(\sum_{m\geq 0}\sum_{y\in\Lambda_m}&
    \frac{|\langle\sum_{n=k}^K g_n,\psi^m_y\rangle|^q}{2^{-m(\bar{\alpha}+\frac{|\s|}{2})q}}
    \chi^m_y(x)\bigg)^\frac{1}{q}\bigg\|_{L^p} \\
    &\lesssim 2^{-k\bar{\gamma}}\bigg\|\bigg(\sum_{n\geq 0}\Big|\sum_{y\in\Lambda_n}
    \frac{|\delta A^n_y|}{2^{-n(\gamma+\frac{|\s|}{2})}}\chi^n_y(x)
    \Big|^q\bigg)^\frac{1}{q}\bigg\|_{L^p}.
\end{align*}
All in all, we obtain the desired convergence by making use of our assumptions \eqref{convCrit1} 
and \eqref{convCrit2}, respectively. Furthermore, a second view on our argument 
for the contributions due to the $\delta\xi_n$ reveals that we also have the bound
\begin{align*}
    \bigg\|\sup_{m\geq 0}\sum_{y\in\Lambda_m}
    \frac{|\langle\sum_{n=k}^K\delta\xi_n,\psi^m_y\rangle|}
    {2^{-m(\alpha+\frac{|\s|}{2})}}\chi^m_y(x)\bigg\|_{L^p} 
    \lesssim \bigg\| \sup_{n\geq 0} \sum_{y\in\Lambda_n} 
    \frac{|A^n_y|} {2^{-n(\alpha+\frac{|\s|}{2})}} \chi^n_y(x) \bigg\|_{L^p},
\end{align*}
uniformly over all $k\geq 0$ and all $K\geq k$. As the argument for the contributions due to
the $g_n$ works verbatim for the case $\bar{\alpha} = \alpha$, we deduce that in case of $q=\infty$
the limit distribution actually lives in $F^\alpha_{p,q}$.
\end{proof}

\section{Spaces of modelled distributions} \label{sec:modelled}
In this section, we introduce the corresponding scale of spaces of modelled distributions 
which shall resemble the scale $F_{p,q}^\alpha$ on the level of a regularity structure. 
To this end, we want to follow as close as possible the characterization of Triebel--Lizorkin spaces
by volume means of differences, cf.\ the book of Triebel \cite[Sec.\ 2.5.11]{Triebel:1983}. 
Therefore, we opt for the following definition.

\begin{definition}
Let $(\A,\T,\mathcal{G})$ be a regularity structure, and let $(\Pi,\Gamma)$ be a model.
Consider $1\leq p <\infty$, $1\leq q \leq \infty$ as well as $\gamma > 0$. 
Then, we let $\F_{p,q}^\gamma$ be the (Banach) space of all functions 
$\bf\colon\R^d\to\T_{\gamma}^-$ such that
\begin{itemize}
\item[i)] $\displaystyle\sup_{\zeta\in\A_{\gamma}}\big\||\bf(x)|_\zeta\big\|_{L^p} < \infty$,
\item[ii)] $\displaystyle\sup_{\zeta\in\A_{\gamma}}\bigg\|\Big\|
\mint{-}_{Q(0,4\lambda)}\frac{|\bf(x{+}h)-\Gamma_{x+h,x}\bf(x)|_\zeta}
{\lambda^{\gamma-\zeta}}\,\mathrm{d}h\Big\|_{L^q_\lambda}\bigg\|_{L^p}<\infty$.
\end{itemize}
In the sequel, we will often make use of the difference operator
\begin{align*}
    \Delta_{\Gamma,h}\bf(x) = \bf(x{+}h)-\Gamma_{x+h,x}\bf(x).
\end{align*}
Finally, the associated norm for $\bf\in\F_{p,q}^\gamma$ is denoted by 
$\vvvert\bf\vvvert_{\F_{p,q}^\gamma}$.
\end{definition}

Recall that one particular useful fact of having a wavelet characterization for the space
of distributions $F^\alpha_{p,q}$ is that norms of sequence spaces are typically more
easily analyzed than norms of function spaces, say $L^p$-norms. For the same reason, it turns out that
one should look for a suitable ``discretized'' characterization for the Triebel--Lizorkin scale of modelled distributions
$\F_{p,q}^\gamma$. This was actually one of the key technical achievements in the work of Hairer and Labb\'{e} \cite{Hairer:2017}
for the Besov scale of modelled distributions. Hence, it is not too much of a surprise
that we will follow their ideas and constructions; and adapt them in a suitable way to our setting.

\begin{definition}\label{def:fbar}
Let $(\A,\T,\mathcal{G})$ be a regularity structure, and let $(\Pi,\Gamma)$ be a model.
Consider $1\leq p <\infty$, $1\leq q \leq \infty$ as well as $\gamma > 0$. 
Furthermore, let $\E_n=Q(0,2^{-n})\cap\Lambda_n\setminus\{0\}$. We denote by $\bar{\mathcal{F}}^\gamma_{p,q}$ 
the (Banach) space of all sequences of maps 
\begin{align*}
\fbar^{(n)}\colon\Lambda_n\to\T_{\gamma}^-,\quad n\geq 0,
\end{align*}
such that, uniformly over all $\zeta\in\A_{\gamma}$,
\begin{itemize}
\item[i)] $\displaystyle\Big(\sum_{y\in\Lambda_0}
\big|\fbar^{(0)}(y)\big|_\zeta^p\Big)^\frac{1}{p} < \infty$,
\item[ii)] $\displaystyle\bigg\|\bigg(\sum_{n\geq 0}\Big|\sum_{y\in\Lambda_n}\sum_{h\in\E_{n}}
\frac{|\fbar^{(n)}(y{+}h)-\Gamma_{y{+}h,y}\fbar^{(n)}(y)|_\zeta}{2^{-n(\gamma-\zeta)}}
\chi^n_y(x)\Big|^q\bigg)^\frac{1}{q}\bigg\|_{L^p}<\infty$,
\item[iii)] $\displaystyle\bigg\|\bigg(\sum_{n\geq 0}
\Big|\sum_{y\in\Lambda_n}\frac{|\fbar^{(n)}(y)-\fbar^{(n+1)}(y)|_\zeta}
{2^{-n(\gamma-\zeta)}}\chi^n_y(x)\Big|^q\bigg)^\frac{1}{q}\bigg\|_{L^p}<\infty$.
\end{itemize}
The associated norm for an element $\fbar\in\Fbar_{p,q}^\gamma$ will be denoted by 
$\vvvert\fbar\vvvert_{\Fbar^\gamma_{p,q}}$.
\end{definition}

We define for $M\in\N$ the sets $\E_n^{M,0}=Q(0,M\cdot 2^{-n})\cap \Lambda_n$ and
$\E_n^M = \E_n^{M,0}\setminus\{0\}$. We then have the following additional bounds
which will be of great use for us in the sequel.

\begin{remark}\label{rem:consistency1}
For every $\fbar\in\Fbar^\gamma_{p,q}$ and every $M\in\N$, we have
\begin{align*}
\bigg\|\bigg(\sum_{n\geq 0}\Big|\sum_{y\in\Lambda_n}\sum_{h\in\E_{n+1}^{M,0}}
\frac{|\fbar^{(n)}(y)-\Gamma_{y,y+h}\fbar^{(n{+}1)}(y{+}h)|_\zeta}
{2^{-n(\gamma-\zeta)}}\chi^n_y(x)\Big|^q\bigg)^\frac{1}{q}\bigg\|_{L^p} < \infty.
\end{align*}
In fact, this quantity is bounded by $\vvvert\fbar\vvvert_{\Fbar^\gamma_{p,q}}$ up to some
proportionality constant. For a proof, one can use induction over $M\in\N$ and reduce everything to
the consistency and translation bound.
\end{remark}

\begin{remark}\label{rem:consistency2}
Let $\fbar\in\Fbar^\gamma_{p,q}$ and $M\in\N$. Then
\begin{align*}
    \bigg\|\bigg(\sum_{n\geq 0}\Big|\sum_{y\in\Lambda_n}\sum_{h\in\E_n^M}
    \frac{|\fbar^{(n)}(y{+}h)-\Gamma_{y+h,y}\fbar^{(n)}(y)|_\zeta}{2^{-n(\gamma-\zeta)}}
    \chi^n_y(x)\Big|^q\bigg)^\frac{1}{q}\bigg\|_{L^p} \lesssim \vvvert\fbar\vvvert_{\Fbar^\gamma_{p,q}}.
\end{align*}
This follows immediately from the translation bound, e.g.\ by induction over $M\in\N$.
\end{remark}

\begin{remark}\label{embeddingsSeqSpaces}
Consider parameters $1 \leq q \leq \bar{q} \leq \infty$, $1 \leq p <\infty$ and $\gamma > 0$. 
Then, the following elementary (continuous) embeddings hold true (cf.\ \cite[Sec.\ 2.3.2]{Triebel:1983}):
\begin{itemize}\itemsep3pt
    \item[i)]   $\Fbar^\gamma_{p,q}\subset \Fbar^\gamma_{p,\bar{q}},$
    \item[ii)]  $\bar{\mathcal{B}}^{\gamma}_{p,q\wedge p}\subset
                \Fbar^\gamma_{p,q}\subset\bar{\mathcal{B}}^{\gamma}_{p,q\vee p}.$
\end{itemize}
For the scale of spaces $\bar{\mathcal{B}}$, we refer to Definition \ref{def:discreteBesov}.
In this context, we also use for $n\geq 0$ and maps $u\colon\Lambda_n\to\R$ the notation
\begin{align*}
    \|u\|_{\ell^p_n} := \bigg(\sum_{y\in\Lambda_n}2^{-n|\s|}|u(y)|^p\bigg)^\frac{1}{p}.
\end{align*}
\textit{Indeed:} The first assertion follows directly from $\ell^q\subset\ell^{\bar{q}}$.
Therefore, we immediately move on to the second assertion. Of course, 
the respective local bounds are immediate. Furthermore, the argumentation
for the respective translation and consistency bounds is almost identical. Thus, we only
discuss the former. For this, fix $n\geq 0$, $h\in\E_n$ and $\zeta\in\A_\gamma$. 
Due to the continuous embedding $\ell^1\subset\ell^p$ we obtain the bound
\begin{align*}
    \bigg\|&\frac{|\fbar^{(n)}(y{+}h)-\Gamma_{y{+}h,y}\fbar^{(n)}(y)|_\zeta}
    {2^{-n(\gamma-\zeta)}}\bigg\|_{\ell^p_n}^{q\vee p} \\
    &\hspace{1cm}\leq\bigg(\int_{\R^d}\Big(\sum_{y\in\Lambda_n}
    \frac{|\fbar^{(n)}(y{+}h)-\Gamma_{y{+}h,y}\fbar^{(n)}(y)|_\zeta}{2^{-n(\gamma-\zeta)}}
    \chi^n_y(x)\Big)^{(q\vee p)\frac{p}{q\vee p}}\,\mathrm{d}x\bigg)^\frac{q\vee p}{p}.
\end{align*}
Since $p/(q\vee p)\leq 1$, recall that the space $L^{p/(q\vee p)}(\R^d)$ admits an inverse
triangle inequality for \textit{non-negative} functions. Combining this fact with the former
bound immediately yields
\begin{align*}
    \bigg(\sum_{n\geq 0}&\sum_{h\in\E_n}\left\|
    \frac{|\fbar^{(n)}(y{+}h)-\Gamma_{y{+}h,y}\fbar^{(n)}(y)|_\zeta}{2^{-n(\gamma-\zeta)}}
    \right\|_{\ell^p_n}^{q\vee p}\bigg)^\frac{1}{q\vee p} \\ &\leq
    \bigg\|\bigg(\sum_{n\geq 0}\sum_{h\in\E_n}\Big|\sum_{y\in\Lambda_n}
    \frac{|\fbar^{(n)}(y{+}h)-\Gamma_{y{+}h,y}\fbar^{(n)}(y)|_\zeta}{2^{-n(\gamma-\zeta)}}
    \chi^n_y(x)\Big|^{q\vee p}\bigg)^\frac{1}{q\vee p}\bigg\|_{L^p}.
\end{align*}
At this point, it suffices to exploit the continuous embedding $\ell^q\subset\ell^{q\vee p}$
and the fact that the set $\E_n$ contains finitely many points, independent of $n\geq 0$.
This proves the embedding $\Fbar^\gamma_{p,q}\subset\bar{\mathcal{B}}^{\gamma}_{p,q\vee p}$.
The remaining one is an immediate consequence of the triangle inequality in $L^{p/(q\wedge p)}(\R^d)$.
\end{remark}

Apart from these elementary embeddings, we also expect to have $\F^\gamma_{p,q}\subset\F^{\gamma'}_{p,q}$ 
whenever $\gamma'<\gamma$. In general, this may not be true if $q<\infty$ since the theory imposes
H\"{o}lder type bounds on models $(\Pi,\Gamma)$. Therefore, we make the following standing assumption.

\begin{assump}\label{gamma}
    For a given set of homogeneities $\A$, we assume that $\gamma\notin\A$.
\end{assump}

\begin{remark}\label{rem:sup_Lp}
A noteworthy consequence of the embedding 
$\Fbar^\gamma_{p,q}\subset\bar{\mathcal{B}}^{\gamma}_{p,q\vee p}$
is that we can immediately propagate the local bound to smaller scales, i.e.\
\begin{align*}
    \sup_{n\geq 0} \big\||\fbar^{(n)}(y)|_\zeta\big\|_{\ell^p_n} \lesssim 
    \vvvert\fbar\vvvert_{\Fbar^\gamma_{p,q}},
\end{align*}
which holds uniformly over all $\fbar\in\Fbar^\gamma_{p,q}$ and all $\zeta\in\A_{\gamma}$, 
cf.\ \cite[Lemma 2.14]{Hairer:2017}. On the other side, adapting the 
arguments developed in the proof of \cite[Lemma 2.14]{Hairer:2017} to our setup
and working directly with the scale $\Fbar^\gamma_{p,q}$ shows that we 
actually have the following improved version
\begin{align}\label{localBound}
    \bigg\|\sup_{n\geq 0}\sum_{y\in\Lambda_n}\sum_{h\in\mathcal{E}_n}
		|\fbar^{(n)}(y{+}h)|_\zeta\chi^n_y(x)\bigg\|_{L^p} 
    \lesssim \vvvert\fbar\vvvert_{\Fbar^\gamma_{p,q}},
\end{align}
which again is valid uniformly over all $\fbar\in\Fbar^\gamma_{p,q}$ and all $\zeta\in\A_{\gamma}$.
\end{remark}

\begin{proof}[Proof of \eqref{localBound}]
First of all, due to Assumption \ref{gamma} and the argument in the proof of \cite[Lemma 2.14]{Hairer:2017}
it suffices to prove the desired bound for the case $\zeta=\max\A_\gamma$. We fix $n\geq 0$,
$y'\in\Lambda_n$ and $x\in Q^n_{y'}$. We then have
\begin{align*}
\sum_{y\in\Lambda_{n+1}}\sum_{h\in\mathcal{E}_{n+1}}|\fbar^{(n+1)}(y{+}h)|_\zeta\chi^{n+1}_y(x)
=\sum_{\substack{y\in\Lambda_{n+1}\\ y\in Q^n_{y'}}}\sum_{h\in\mathcal{E}_{n+1}}
|\fbar^{(n+1)}(y{+}h)|_\zeta\chi^{n+1}_y(x).
\end{align*}
Recall the action of the scaling map
\begin{align*}
\mathcal{S}^\lambda_\s x' := (\lambda^{-\s_1}x'_1,\ldots,\lambda^{-\s_d}x'_d),\quad x'\in\R^d,\,\lambda>0.
\end{align*}
Furthermore, for any given $x'\in\R^d$ and $m\in\N$, we denote by $z^m_{x'}\in\Lambda_m$ the unique lattice point
with the property that $x'\in Q^m_{z^m_{x'}}$. Now, we bound
\begin{align*}
\sum_{\substack{y\in\Lambda_{n+1}\\ y\in Q^n_{y'}}}\sum_{h\in\mathcal{E}_{n+1}}
|\fbar^{(n+1)}(y{+}h)|_\zeta\chi^{n+1}_y(x) &\leq
\sum_{\substack{y\in\Lambda_{n+1}\\ y\in Q^n_{y'}}}\sum_{h\in\mathcal{E}_{n+1}}
\big|\fbar^{(n)}\big(z^n_{y+\mathcal{S}_\s^{2^{-1}}h}\big)\big|_\zeta\chi^{n+1}_y(x) \\
&\hspace{-2.5cm}+\sum_{\substack{y\in\Lambda_{n+1}\\ y\in Q^n_{y'}}}\sum_{h\in\mathcal{E}_{n+1}}
|\fbar^{(n+1)}(y{+}h)-\fbar^{(n)}(z_y^n)|_\zeta\chi^{n+1}_y(x) \\
&\hspace{-2.5cm}+\sum_{\substack{y\in\Lambda_{n+1}\\ y\in Q^n_{y'}}}\sum_{h\in\mathcal{E}_{n+1}}
\big|\fbar^{(n)}(z_y^n)-\fbar^{(n)}\big(z^n_{y+\mathcal{S}_\s^{2^{-1}}h}\big)\big|_\zeta\chi^{n+1}_y(x).
\end{align*}
Observe that for $y\in\Lambda_{n+1}\cap Q^n_{y'}$ and $h\in\mathcal{E}_{n+1}$ 
we have $z^n_y=y'$ as well as $z^n_{y+\mathcal{S}_\s^{2^{-1}}h}=y'+\mathcal{S}_\s^{2^{-1}}h$.
Note also that the map $h\mapsto y'+\mathcal{S}_\s^{2^{-1}}h$ defines a bijection
$\mathcal{E}_{n+1}\to\{y'\}+\mathcal{E}_{n}$. Now since $\zeta=\max\A_\gamma$ we have
$\Q_\zeta\Gamma\bTau = \Q_\zeta\bTau$ for all $\bTau\in\T^-_\gamma$ and all $\Gamma\in\G$.
Hence,
\begin{align*}
\sum_{\substack{y\in\Lambda_{n+1}\\ y\in Q^n_{y'}}}\sum_{h\in\mathcal{E}_{n+1}}
|\fbar^{(n+1)}(y{+}h)|_\zeta\chi^{n+1}_y(x) &\leq \sum_{h\in\mathcal{E}_{n}}
|\fbar^{(n)}(y'{+}h)|_\zeta\chi^{n}_{y'}(x) \\
&\hspace{-2.5cm}+\sum_{h\in\mathcal{E}^{2,0}_{n+1}}
|\Gamma_{y',y'+h}\fbar^{(n+1)}(y'{+}h)-\fbar^{(n)}(y')|_\zeta\chi^{n}_{y'}(x) \\
&\hspace{-2.5cm}+\sum_{h\in\mathcal{E}_{n}}
|\Gamma_{y'+h,y'}\fbar^{(n)}(y')-\fbar^{(n)}(y'{+}h)|_\zeta\chi^{n}_{y'}(x).
\end{align*}
Since this bound holds true uniformly over all $n\geq 0$,
all $y'\in\Lambda_n$ and all $x\in Q^n_{y'}$ we infer with 
the help of Remark \ref{rem:consistency1} that
\begin{align*}
\bigg\|\sup_{n\geq 0}&\sum_{y\in\Lambda_n}\sum_{h\in\mathcal{E}_n}
|\fbar^{(n)}(y{+}h)|_\zeta\chi^n_y(x)\bigg\|_{L^p} \\ &\leq
\bigg\|\sum_{y\in\Lambda_0}\sum_{h\in\mathcal{E}_0}
|\fbar^{(0)}(y{+}h)|_\zeta\chi^0_y(x)\bigg\|_{L^p} +
C\|(2^{-n(\gamma-\zeta)})_n\|_{\ell^{\frac{q}{q-1}}}\vvvert\fbar\vvvert_{\Fbar^\gamma_{p,q}},
\end{align*}
for some absolute constant $C>0$. This concludes the proof.
\end{proof}

The remainder of this section is devoted to the discussion of how these spaces,
i.e.\ $\F^\gamma_{p,q}$ and $\Fbar^\gamma_{p,q}$, are related to each other.
We begin with the task of how to obtain from a given modelled distribution $\bf\in\F^\gamma_{p,q}$
an associated element $\fbar$ in the space $\Fbar^\gamma_{p,q}$. This objective is
settled in the first part of the following statement. On the other hand, if we want
to go the other way around later on, we would like to ensure that both of the resulting maps
between $\F^\gamma_{p,q}$ and $\Fbar^\gamma_{p,q}$ are consistent. We take care
of this aim in the second part of the following statement.

\begin{proposition}\label{prop:forth} 
Let $1\leq p <\infty$, $1\leq q\leq \infty$ and $\gamma > 0$.

(i) Given a modelled distribution $\bf\in\F^\gamma_{p,q}$, we define for all $n\geq 0$
and $y\in\Lambda_n$
\begin{align}\label{average}
\fbar^{(n)}(y) = \mint{-}_{Q(y,2^{-n})}\Gamma_{y,z}\bf(z)\,\mathrm{d}z.  
\end{align}
With this definition, we indeed obtain $\fbar\in\Fbar^\gamma_{p,q}$ and
$\vvvert\fbar\vvvert_{\Fbar^\gamma_{p,q}}\lesssim\vvvert\bf\vvvert_{\F^\gamma_{p,q}}$.

(ii) Let again $\bf\in\F^\gamma_{p,q}$ be a modelled distribution and define, 
for $x\in\R^d$ and $n\in\N_0$, the map
\begin{align}
\bf_n(x) = \Gamma_{x,x_n}\fbar^{(n)}(x_n).
\end{align}
Here, $\fbar^{(n)}$ is defined as in the first part of the statement.
Then, we have convergence $\Q_\zeta\bf_n\to\Q_\zeta\bf$ in $L^p(\R^d)$ for all $\zeta\in\A_\gamma$.
\end{proposition}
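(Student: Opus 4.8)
The plan is to reduce both parts to the elementary identity that, for an arbitrary reference point $x\in\R^d$ and any $n\ge0$, $y\in\Lambda_n$,
\[
\fbar^{(n)}(y)=\Gamma_{y,x}\bf(x)+\mint{-}_{Q(y,2^{-n})}\Gamma_{y,z}\,\Delta_{\Gamma,z-x}\bf(x)\,\mathrm{d}z,
\]
which follows by writing $\bf(z)=\Gamma_{z,x}\bf(x)+\Delta_{\Gamma,z-x}\bf(x)$ under the average and using the cocycle relation $\Gamma_{y,z}\Gamma_{z,x}=\Gamma_{y,x}$; the integrals are meaningful since $|\bf|_\beta\in L^p\subset L^1_{\mathrm{loc}}$. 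Two facts will be used throughout without further mention: $\A_\gamma$ is finite, so any sum over $\beta\in\A_\gamma$ can be split off by Minkowski's inequality, and $\gamma-\beta>0$ for every $\beta\in\A_\gamma$.

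For part (i) I verify the three bounds of Definition \ref{def:fbar}. The local bound is immediate: for $z\in Q(y,1)$ one has $\|y-z\|_\s\le1$, so the structure group bound gives $|\fbar^{(0)}(y)|_\zeta\lesssim\sum_{\beta\ge\zeta}\mint{-}_{Q(y,1)}|\bf(z)|_\beta\,\mathrm{d}z$, and Jensen's inequality together with the finite overlap of $\{Q(y,1)\}_{y\in\Lambda_0}$ bounds the $\ell^p$-sum by $\sum_\beta\||\bf|_\beta\|_{L^p}\lesssim\vvvert\bf\vvvert_{\F^\gamma_{p,q}}$. For the translation bound, applying the identity with the \emph{same} reference point $x$ to $\fbar^{(n)}(y+h)$ and to $\fbar^{(n)}(y)$ (and pulling $\Gamma_{y+h,y}$ through the average via $\Gamma_{y+h,y}\Gamma_{y,z}=\Gamma_{y+h,z}$) makes the leading terms $\Gamma_{y+h,x}\bf(x)$ cancel, leaving
\begin{align*}
\fbar^{(n)}(y{+}h)-\Gamma_{y+h,y}\fbar^{(n)}(y)
&=\mint{-}_{Q(y+h,2^{-n})}\Gamma_{y+h,z}\,\Delta_{\Gamma,z-x}\bf(x)\,\mathrm{d}z \\
&\quad-\mint{-}_{Q(y,2^{-n})}\Gamma_{y+h,z}\,\Delta_{\Gamma,z-x}\bf(x)\,\mathrm{d}z.
\end{align*}
Taking the $\zeta$-component, applying the structure group bound (with $\|y{+}h-z\|_\s\lesssim2^{-n}$ on both averaging domains, since $h\in\E_n$), restricting $x$ to the support of $\chi^n_y$ so that both domains are contained in $Q(x,3\cdot2^{-n})$, and substituting $w=z-x$, one arrives at the pointwise estimate
\begin{align*}
&\sum_{y\in\Lambda_n}\sum_{h\in\E_n}\frac{|\fbar^{(n)}(y{+}h)-\Gamma_{y+h,y}\fbar^{(n)}(y)|_\zeta}{2^{-n(\gamma-\zeta)}}\chi^n_y(x) \\
&\qquad\lesssim\sum_{\beta\in\A_\gamma,\,\beta\ge\zeta}\mint{-}_{Q(0,3\cdot2^{-n})}\frac{|\Delta_{\Gamma,w}\bf(x)|_\beta}{2^{-n(\gamma-\beta)}}\,\mathrm{d}w,
\end{align*}
where the $y$-sum collapses by $\sum_y\chi^n_y\equiv1$ and the $h$-sum only contributes a bounded factor. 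The consistency bound is handled in exactly the same way, the difference of averages over $Q(y,2^{-n})$ and $Q(y,2^{-n-1})$ yielding the same right-hand side with $Q(0,2\cdot2^{-n})$. It then remains, for each $\beta\in\A_\gamma$, to control the dyadic quantity $\big\|\big(\sum_{n\ge0}(\mint{-}_{Q(0,c\cdot2^{-n})}|\Delta_{\Gamma,w}\bf(x)|_\beta\,\mathrm{d}w\,/\,2^{-n(\gamma-\beta)})^q\big)^{1/q}\big\|_{L^p}$ by $\vvvert\bf\vvvert_{\F^\gamma_{p,q}}$. This is the familiar passage from a dyadic sum to the continuous $L^q_\lambda$-integral defining $\F^\gamma_{p,q}$: on a positive-length sub-interval of $(2^{-n-1},2^{-n}]$ one has $Q(0,c\cdot2^{-n})\subseteq Q(0,4\lambda)$ and $\lambda^{\gamma-\beta}\le2^{-n(\gamma-\beta)}$, so the $n$-th dyadic term is dominated by the integral of $(\mint{-}_{Q(0,4\lambda)}|\Delta_{\Gamma,w}\bf(x)|_\beta\,\mathrm{d}w/\lambda^{\gamma-\beta})^q\,\lambda^{-1}\,\mathrm{d}\lambda$ over that sub-interval, and summing over $n$ reconstitutes the full norm. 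The case $q=\infty$ is the same with suprema replacing integrals throughout.

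For part (ii), the identity applied with $y=x_n$ (the lattice point of $\Lambda_n$ with $x\in Q^n_{x_n}$) gives, after using $\Gamma_{x,x_n}\Gamma_{x_n,x}=1$ and $\Gamma_{x,x_n}\Gamma_{x_n,z}=\Gamma_{x,z}$,
\[
\bf_n(x)-\bf(x)=\mint{-}_{Q(x_n,2^{-n})}\Gamma_{x,z}\,\Delta_{\Gamma,z-x}\bf(x)\,\mathrm{d}z .
\]
Since $Q(x_n,2^{-n})\subseteq Q(x,2\cdot2^{-n})$, the structure group bound and the substitution $w=z-x$ yield $|\bf_n(x)-\bf(x)|_\zeta\lesssim\sum_{\beta\ge\zeta}\mint{-}_{Q(0,2\cdot2^{-n})}|\Delta_{\Gamma,w}\bf(x)|_\beta\,\mathrm{d}w$. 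Because $Q(0,2\cdot2^{-n})\subseteq Q(0,4\lambda)$ for all $\lambda\in(2^{-n-1},2^{-n}]$, averaging over that block bounds the right-hand side, for each $\beta\in\A_\gamma$, by $2^{-n(\gamma-\beta)}G_\beta(x)$, where $G_\beta(x):=\big\|\lambda^{-(\gamma-\beta)}\mint{-}_{Q(0,4\lambda)}|\Delta_{\Gamma,h}\bf(x)|_\beta\,\mathrm{d}h\big\|_{L^q_\lambda}$ (a supremum if $q=\infty$) satisfies $\|G_\beta\|_{L^p}\le\vvvert\bf\vvvert_{\F^\gamma_{p,q}}$. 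Since $\gamma-\beta>0$, we conclude $\||\bf_n-\bf|_\zeta\|_{L^p}\lesssim\sum_{\beta\ge\zeta}2^{-n(\gamma-\beta)}\|G_\beta\|_{L^p}\to0$ as $n\to\infty$, which is exactly $\Q_\zeta\bf_n\to\Q_\zeta\bf$ in $L^p$.

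The step I expect to be the main obstacle is the translation bound in (i), where one must simultaneously (a) rearrange the algebra so that the discrete increment $\fbar^{(n)}(y{+}h)-\Gamma_{y+h,y}\fbar^{(n)}(y)$ is expressed purely through $\Delta_{\Gamma,\cdot}\bf$ evaluated at a single reference point, and (b) track the geometric constants carefully enough that the resulting discrete averages nest inside the balls $Q(0,4\lambda)$ of the continuous Triebel--Lizorkin norm uniformly over $\lambda$ in a fixed-length sub-interval of each dyadic block. Everything else --- the local and consistency bounds, and all of part (ii) --- is then a variation on the same computation.
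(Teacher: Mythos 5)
Your proof is correct and follows essentially the same route as the paper: in both cases the discrete increments in $\fbar$ are re-expressed as volume means of $\Delta_{\Gamma,\cdot}\bf(x)$ over $\|\cdot\|_\s$-balls of radius $\sim 2^{-n}$ centred at the $L^p$-variable $x$, and the dyadic-to-continuous passage is carried out by matching each scale $n$ with a fixed-measure subinterval of $(2^{-n-1},2^{-n}]$ inside the $L^q_\lambda$-norm. The only cosmetic difference is that you expand $\bf(z)=\Gamma_{z,x}\bf(x)+\Delta_{\Gamma,z-x}\bf(x)$ under the average from the outset so that the $\Gamma_{\cdot,x}\bf(x)$ terms cancel explicitly, whereas the paper first bounds the increment by a volume mean of $\Delta_{\Gamma,h}\bf(z)$ and then re-expands around $x$ via the triangle inequality; the two rearrangements are algebraically equivalent.
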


\begin{proof}
(i) The local bound holds immediately. Thus, let us start with the translation bound. For, we
fix $x\in\R^d$, $n\geq 0$ and $y=x_n\in\Lambda_n$. We then have the estimate
\begin{align*}
    \frac{|\fbar^{(n)}(y{+}h)-\Gamma_{y+h,y}\fbar^{(n)}(y)|_\zeta}{2^{-n(\gamma-\zeta)}} 
    &\lesssim \sum_{\beta\geq\zeta}\mint{-}_{Q(y,2^{-n})}
    \frac{|\bf(z{+}h)-\Gamma_{z+h,z}\bf(z)|_\beta}{2^{-n(\gamma-\beta)}}\,\mathrm{d}z \\
    &\hspace{-3cm}\lesssim \sum_{\beta\geq\zeta}\mint{-}_{Q(x,2\cdot 2^{-n})}
    \frac{|\bf(z{+}h)-\Gamma_{z+h,z}\bf(z)|_\beta}{2^{-n(\gamma-\beta)}}\,\mathrm{d}z \\
    &\hspace{-3cm}\lesssim \sum_{\beta\geq\zeta}\mint{-}_{Q(0,2\cdot 2^{-n})}
    \frac{|\bf(x{+}z{+}h)-\Gamma_{x{+}z{+}h,x}\bf(x)|_\beta}{2^{-n(\gamma-\beta)}}\,\mathrm{d}z \\
    &\hspace{-1.5cm}+\sum_{\beta\geq\zeta}\mint{-}_{Q(0,2\cdot 2^{-n})}
    \frac{|\Gamma_{x+z+h,x+z}(\bf(x{+}z)-\Gamma_{x+z,x}\bf(x))|_\beta}{2^{-n(\gamma-\beta)}}\,\mathrm{d}z \\
    &\hspace{-3cm}\lesssim\sum_{\delta\geq\beta}\sum_{\beta\geq\zeta}\mint{-}_{Q(0,3\cdot 2^{-n})}
    \frac{|\bf(x{+}h)-\Gamma_{x+h,x}\bf(x)|_\delta}{2^{-n(\gamma-\delta)}}\,\mathrm{d}h.
\end{align*}
In particular, this immediately yields the required bound.
When it comes to the consistency bound, it is straightforward to verify that
\begin{align*}
    |\fbar^{(n)}(y)&-\fbar^{(n+1)}(y)|_\zeta \lesssim
    \sum_{\beta\geq\zeta}2^{-n(\beta-\zeta)}
    \mint{-}_{Q(0,2^{-n})}|\Delta_{\Gamma,h}\bf(x)|_\beta\,\mathrm{d}h,
\end{align*}
from which the desired bound again follows at once. This concludes the proof of the first assertion.

(ii) This is an immediate consequence of the fact that for a.e.\ $x\in\R^d$
\begin{align*}
    |\bf(x)-\bf_n(x)|_\zeta \lesssim \sum_{\beta\geq\zeta}2^{-n(\gamma-\zeta)}
    \mint{-}_{Q(0,2\cdot2^{-n})}\frac{|\Delta_{\Gamma,h}\bf(x)|_\beta}{2^{-n(\gamma-\beta)}}\,\mathrm{d}h.
\end{align*} 
Hence, we can conclude the proof.
\end{proof}

In a second step, we prove how to get back a modelled distribution from
an element of the discrete counterpart $\Fbar^\gamma_{p,q}$.

\begin{proposition}\label{prop:back}
We consider $1\leq p <\infty$, $1\leq q\leq \infty$ and $\gamma > 0$.
Let $\fbar\in\Fbar^\gamma_{p,q}$ and define, for $x\in\R^d$ and $n\in\N_0$,
\begin{align*}
\bf_n(x) = \Gamma_{x,x_n}\fbar^{(n)}(x_n).
\end{align*}
Then, for all $\zeta\in\A_\gamma$, the sequence
$(\Q_\zeta\bf_n)_{n\in\N}$ is Cauchy in $L^p(\R^d)$. Furthermore, the limit 
$\bf\colon\R^d\to\T_\gamma^-$ is a modelled distribution in $\F^\gamma_{p,q}$
with $\vvvert\bf\vvvert_{\F^\gamma_{p,q}}\lesssim\vvvert\fbar\vvvert_{\Fbar^\gamma_{p,q}}$.
\end{proposition}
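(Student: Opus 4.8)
The plan is to first show that $(\Q_\zeta\bf_n)_n$ is Cauchy in $L^p$ for each $\zeta\in\A_\gamma$ by a telescoping argument with geometric decay, and then to verify the two defining bounds of $\F^\gamma_{p,q}$ for the limit $\bf$. Throughout write $x_n=z^n_x$ for the lattice point of $\Lambda_n$ with $x\in Q^n_{x_n}$, let all sums over $\beta$ range over the finite set $\{\beta\in\A_\gamma:\beta\geq\zeta\}$, and set $\delta_0:=\gamma-\max\A_\gamma>0$, which is finite by Assumption~\ref{gamma} and local finiteness of $\A$. For the Cauchy property I would fix $\zeta$ and, using the cocycle identity, rewrite
\begin{align*}
\bf_{n+1}(x)-\bf_n(x)=-\,\Gamma_{x,x_n}\big(\fbar^{(n)}(x_n)-\Gamma_{x_n,x_{n+1}}\fbar^{(n+1)}(x_{n+1})\big),
\end{align*}
noting that $x_{n+1}-x_n\in\E^{M,0}_{n+1}$ for a fixed $M$ and that $\|x-x_n\|_\s\lesssim 2^{-n}$. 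Applying the structure group bound~iii) for $\Gamma$ then gives $|\Q_\zeta(\bf_{n+1}-\bf_n)(x)|\lesssim 2^{-n(\gamma-\zeta)}\sum_\beta R^\beta_n(x)$, where
\begin{align*}
R^\beta_n(x):=\sum_{y\in\Lambda_n}\sum_{h\in\E^{M,0}_{n+1}}\frac{|\fbar^{(n)}(y)-\Gamma_{y,y+h}\fbar^{(n+1)}(y+h)|_\beta}{2^{-n(\gamma-\beta)}}\chi^n_y(x),
\end{align*}
the diagonal term $h=0$ being the pure consistency increment. Since $\|R^\beta_n\|_{L^p}\leq\big\|(\sum_m|R^\beta_m|^q)^{1/q}\big\|_{L^p}\lesssim\vvvert\fbar\vvvert_{\Fbar^\gamma_{p,q}}$ by Remark~\ref{rem:consistency1}, this yields $\|\Q_\zeta(\bf_{n+1}-\bf_n)\|_{L^p}\lesssim 2^{-n(\gamma-\zeta)}\vvvert\fbar\vvvert_{\Fbar^\gamma_{p,q}}$, and summing the geometric series gives the claim. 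I then define $\Q_\zeta\bf$ as the limit and $\bf:=\sum_{\zeta\in\A_\gamma}\Q_\zeta\bf$.

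For bound~i) of $\F^\gamma_{p,q}$, the structure group bound gives $|\bf_n(x)|_\zeta\lesssim\sum_\beta 2^{-n(\beta-\zeta)}|\fbar^{(n)}(x_n)|_\beta$, and since $\big\|\sum_{y\in\Lambda_n}|\fbar^{(n)}(y)|_\beta\chi^n_y\big\|_{L^p}=\big\||\fbar^{(n)}|_\beta\big\|_{\ell^p_n}\lesssim\vvvert\fbar\vvvert_{\Fbar^\gamma_{p,q}}$ uniformly in $n$ by Remark~\ref{rem:sup_Lp}, one gets $\big\||\bf_n(x)|_\zeta\big\|_{L^p}\lesssim\vvvert\fbar\vvvert_{\Fbar^\gamma_{p,q}}$ uniformly, and passing to the $L^p$-limit gives bound~i).

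The core is bound~ii). I would split the $L^q_\lambda$-norm over the dyadic blocks $\lambda\in(2^{-(n_0+1)},2^{-n_0}]$ and, on such a block, write $\bf=\bf_{n_0}+\sum_{k\geq n_0}(\bf_{k+1}-\bf_k)$ (the tail converging absolutely a.e.\ in each component, since $\sum_k\|\Q_\zeta(\bf_{k+1}-\bf_k)\|_{L^p}<\infty$), so that a.e.\ $|\Q_\zeta\Delta_{\Gamma,h}\bf(x)|\leq|\Q_\zeta\Delta_{\Gamma,h}\bf_{n_0}(x)|+\sum_{k\geq n_0}|\Q_\zeta\Delta_{\Gamma,h}(\bf_{k+1}-\bf_k)(x)|$. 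For the first term, since $\|h\|_\s\leq 4\lambda\lesssim 2^{-n_0}$ the lattice points $(x+h)_{n_0}$ and $x_{n_0}$ differ by some $h'\in\E^{M'}_{n_0}$; inserting $\Gamma$'s and using bound~iii) for $\Gamma$ shows that $\lambda^{-(\gamma-\zeta)}|\Q_\zeta\Delta_{\Gamma,h}\bf_{n_0}(x)|$ is bounded, up to a finite sum over $\beta$, by
\begin{align*}
S^\beta_{n_0}(x):=\sum_{y\in\Lambda_{n_0}}\sum_{h'\in\E^{M'}_{n_0}}2^{n_0(\gamma-\beta)}|\fbar^{(n_0)}(y{+}h')-\Gamma_{y+h',y}\fbar^{(n_0)}(y)|_\beta\,\chi^{n_0}_y(x),
\end{align*}
which is independent of $h$ and of $\lambda$ inside the block; hence averaging over $Q(0,4\lambda)$ and taking the $L^q_\lambda$-norm on the block are harmless, and the $\ell^q$-sum over $n_0$ followed by the $L^p$-norm reproduces (a finite sum over $\beta$ of) the quantity of Remark~\ref{rem:consistency2}, which is $\lesssim\vvvert\fbar\vvvert_{\Fbar^\gamma_{p,q}}$.

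For the tail, writing $\phi_k:=\bf_{k+1}-\bf_k$ and arguing as in the first paragraph, I would first record $|\phi_k(z)|_{\zeta'}\lesssim\sum_{\beta\geq\zeta'}2^{-k(\gamma-\zeta')}R^\beta_k(z)$ for every $\zeta'$; combining this with bound~iii) for $\Gamma_{x+h,x}$ (using $\|h\|_\s\lesssim 2^{-n_0}$ with $n_0\leq k$) yields, for $k\geq n_0$,
\begin{align*}
\lambda^{-(\gamma-\zeta)}|\Q_\zeta\Delta_{\Gamma,h}\phi_k(x)|\lesssim 2^{-(k-n_0)\delta_0}\sum_\beta\big(R^\beta_k(x{+}h)+R^\beta_k(x)\big),
\end{align*}
the summable gain $2^{-(k-n_0)\delta_0}$ arising because $\phi_k$ is $O(2^{-k(\gamma-\zeta)})$ while the normalisation only divides by $\lambda^{\gamma-\zeta}\sim 2^{-n_0(\gamma-\zeta)}$. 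Since the right-hand side is independent of $\lambda$ inside the block, averaging $R^\beta_k(\cdot{+}h)$ over $h\in Q(0,4\lambda)$ is bounded, for any $\eta\in(0,1)$, by $\mathcal{M}\big((R^\beta_k)^\eta\big)(x)^{1/\eta}$; choosing $0<\eta<p\wedge q$, summing over the blocks, using the bound $\|a\star h\|_{\ell^q}\leq\|a\|_{\ell^1}\|h\|_{\ell^q}$ for the geometric sequence $a=(2^{-j\delta_0})_{j\geq 0}\in\ell^1$ acting on the $\ell^q$-sequence over $n_0$, and then the Fefferman--Stein maximal inequality~\cite{Fefferman:1971}, reduces the tail contribution to $\sum_\beta\big\|(\sum_{k\geq 0}|R^\beta_k(x)|^q)^{1/q}\big\|_{L^p}$, which is $\lesssim\vvvert\fbar\vvvert_{\Fbar^\gamma_{p,q}}$ by Remark~\ref{rem:consistency1}; together with the bound on the first term this proves bound~ii). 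The case $q=\infty$ is handled by replacing $\ell^q$-sums with suprema throughout, and the $\eta$-regularisation is exactly what keeps the maximal-function estimates valid down to $q=1$ and small $p$. I expect the main obstacle to be the careful bookkeeping in the pointwise estimates for $\Delta_{\Gamma,h}\phi_k$ and $\Delta_{\Gamma,h}\bf_{n_0}$---matching each $\Gamma$-increment to one of the normalised quantities of Remarks~\ref{rem:consistency1}--\ref{rem:consistency2} while extracting the decay $2^{-(k-n_0)\delta_0}$---together with the passage from these pointwise bounds to the final $L^p(L^q_\lambda)$-estimate via the maximal inequalities.
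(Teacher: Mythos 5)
Your overall architecture is sound and is essentially the same as the paper's: prove the Cauchy property via the structure-group bound and Remark~\ref{rem:consistency1}, deduce the local bound, and then split the translation bound into the contribution from $\bf_{n_0}$ and a telescoping tail. The paper phrases the tail as the sum of the first and third terms of its three-way decomposition~\eqref{decomp}, while you bundle both into $\sum_{k\geq n_0}\Delta_{\Gamma,h}\phi_k$ with $\phi_k=\bf_{k+1}-\bf_k$; after expanding $\Delta_{\Gamma,h}\phi_k(x)=\phi_k(x{+}h)-\Gamma_{x+h,x}\phi_k(x)$ your pointwise bounds $|\phi_k(\cdot)|_{\zeta'}\lesssim 2^{-k(\gamma-\zeta')}\sum_\beta R^\beta_k(\cdot)$ and the resulting decay $2^{-(k-n_0)\delta_0}$ match the paper's intermediate estimates exactly, so this is a cosmetic rather than structural difference.

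There is, however, a genuine gap in the tail estimate. You assert that
\begin{align*}
\mint{-}_{Q(0,4\lambda)} R^\beta_k(x{+}h)\,\mathrm{d}h
\;\leq\; \Big(\mathcal{M}\big((R^\beta_k)^\eta\big)(x)\Big)^{1/\eta}
\end{align*}
for any $\eta\in(0,1)$, but this inequality is false: by Jensen, $\mint{-}_{Q}(R^\beta_k)^\eta\leq(\mint{-}_Q R^\beta_k)^\eta$, so the maximal function of the $\eta$-th power can \emph{under}estimate the average. The quantity $R^\beta_k$ is piecewise constant on cubes of side $\sim 2^{-k}$, while the average is taken over a cube of side $\sim 2^{-n_0}$, so the number of cells involved is $N\sim 2^{(k-n_0)|\s|}$. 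Subadditivity of $t\mapsto t^\eta$ (the argument used in the paper for its small-scale wavelet estimates) gives $\frac1N\sum a\leq N^{1/\eta-1}\big(\frac1N\sum a^\eta\big)^{1/\eta}$, and the correct bound therefore reads
\begin{align*}
\mint{-}_{Q(0,4\lambda)} R^\beta_k(x{+}h)\,\mathrm{d}h
\;\lesssim\; 2^{(k-n_0)|\s|\frac{1-\eta}{\eta}}\Big(\mathcal{M}\big((R^\beta_k)^\eta\big)(x)\Big)^{1/\eta}.
\end{align*}
This blow-up factor must be absorbed by the decay $2^{-(k-n_0)\delta_0}$: the geometric sequence you plug into Young's convolution inequality is in reality $\big(2^{-j(\delta_0 - |\s|\frac{1-\eta}{\eta})}\big)_{j\geq 0}$, which is summable only if $\delta_0>|\s|\frac{1-\eta}{\eta}$. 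To repair the step you must choose $\eta\in(0,1)$ simultaneously close enough to $1$ that $\delta_0-|\s|\frac{1-\eta}{\eta}>0$ \emph{and} with $\eta<p\wedge q$; the former does not follow from the latter, and your choice ``$\eta<p\wedge q$'' alone is insufficient when $p$ or $q$ is small. The paper makes exactly this choice explicit by requiring $\gamma'=\gamma-\zeta+|\s|(\eta-1)/\eta>0$. Once the parameter is adjusted, the rest of your argument (Young's inequality on the $\ell^q$-sequence over $n_0$, Fefferman--Stein, Remark~\ref{rem:consistency1}) goes through as you describe.
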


\begin{proof}
We first prove the convergence assertion. To this end, fix $x\in\R^d$ and $n\geq 0$.
We then bound $|\bf_n(x)-\bf_{n+1}(x)|_\zeta$ by
\begin{equation}
\begin{aligned}\label{LpConv}
    \big|\Gamma_{x,x_n}\big(\fbar^{(n)}&(x_n)
    -\Gamma_{x_n,x_{n+1}}\fbar^{(n+1)}(x_{n+1})\big)\big|_\zeta \\
    &\lesssim \sum_{\beta\geq\zeta}2^{-n(\beta-\zeta)}
    |\fbar^{(n)}(x_n)-\Gamma_{x_n,x_{n+1}}\fbar^{(n+1)}(x_{n+1})|_\beta.
\end{aligned}
\end{equation}
Next, we make use of H\"older's inequality and 
Remark \ref{rem:consistency1} to deduce that
\begin{align*}
    \bigg\|\sum_{n\geq n_0}\frac{|\fbar^{(n)}(x_n)-\Gamma_{x_n,x_{n+1}}\fbar^{(n+1)}(x_{n+1})|_\beta}
    {2^{n(\beta-\zeta)}}\bigg\|_{L^p} \lesssim 2^{-n_0(\gamma-\zeta)}
    \vvvert\fbar\vvvert_{\Fbar^\gamma_{p,q}}.
\end{align*}
Thus, for all $\zeta\in\A_\gamma$, the sequence
$(\Q_\zeta\bf_n)_{n\in\N}$ is indeed Cauchy in $L^p(\R^d)$. Let us denote
the limit by $\bf\colon\R^d\to\T^-_\gamma$. It remains to show that
$\bf\in\F^\gamma_{p,q}$, and that the asserted bound does hold.

We first note that the local bound is already part of the argument above.
Hence, let us concentrate on the translation bound. For this, we start with
\begin{align*}
    \bigg\|\Big\|\mint{-}_{Q(0,4\lambda)}\frac{|\Delta_{\Gamma,h}\bf(x)|_\zeta}
    {\lambda^{\gamma-\zeta}}\,\mathrm{d}h\Big\|_{L^q_\lambda}\bigg\|_{L^p} \lesssim
    \bigg\|\bigg(\sum_{n\geq 0}\Big|\mint{-}_{Q(0,4\cdot 2^{-n})}
    \frac{|\Delta_{\Gamma,h}\bf(x)|_\zeta}{2^{-n(\gamma-\zeta)}}\,\mathrm{d}h\Big|^q\bigg)^\frac{1}{q}\bigg\|_{L^p}.
\end{align*}
For the time being, we fix $x\in\R^d$, $n\geq 0$ as well as $h\in Q(0,4\cdot 2^{-n})$.
Following Hairer and Labb\'{e} \cite{Hairer:2017}, 
we then employ the following decomposition of $\Delta_{\Gamma,h}\bf(x)$:
\begin{align}\label{decomp}
    \big(\bf(x{+}h)-\bf_n(x{+}h)\big) + \Delta_{\Gamma,h}\bf_n(x)
    +\Gamma_{x+h,x}\big(\bf_n(x)-\bf(x)\big).
\end{align}
We treat each term separately in the following. Let us begin with the second term in \eqref{decomp}.
We find $h'\in\E_n^{M,0}$, $M=4$, such that $(x{+}h)_n=x_n+h'$. Observe next that
\begin{align*}
    \Delta_{\Gamma,h}\bf_n(x) = \Gamma_{x+h,x_n+h'}
    \big(\fbar^{(n)}(x_n{+}h')-\Gamma_{x_n+h',x_n}\fbar^{(n)}(x_n)\big).
\end{align*}
If $h'=0$ the term on the right hand side of this identity vanishes, i.e.\ we obtain the bound
\begin{align*}
    \Big|\mint{-}_{Q(0,4\cdot 2^{-n})}&
    \frac{|\Delta_{\Gamma,h}\bf_n(x)|_\zeta}{2^{-n(\gamma-\zeta)}}\,\mathrm{d}h\Big|
    \\ &\lesssim \sum_{\beta\geq\zeta}\sum_{y\in\Lambda_n}\sum_{h\in\E_n^M}
    \frac{|\fbar^{(n)}(y{+}h')-\Gamma_{y+h',y}\fbar^{(n)}(y)|_\beta}
    {2^{-n(\gamma-\beta)}}\chi^n_y(x).
\end{align*}
Therefore, it remains to make use of Remark \ref{rem:consistency2} which yields a bound
of requested order. We turn to the third term in \eqref{decomp}. In this case, we can estimate
\begin{align*}
    \mint{-}_{Q(0,4\cdot 2^{-n})}&
    \frac{|\Gamma_{x+h,x}(\bf_n(x)-\bf(x))|_\zeta}{2^{-n(\gamma-\zeta)}}\,\mathrm{d}h
    \lesssim\sum_{\beta\geq\zeta}\sum_{m\geq n}\frac{|\bf_m(x)-\bf_{m+1}(x)|_\beta}{2^{-n(\gamma-\beta)}} \\
    &\hspace{-1cm}\lesssim\sum_{\delta\geq\beta}\sum_{\beta\geq\zeta}\sum_{m\geq n}
    \sum_{y\in\Lambda_m}\sum_{h\in\E_{m+1}^0}\frac{|\fbar^{(m)}(y)-\Gamma_{y,y+h}\fbar^{(m+1)}(y{+}h)|_\delta}
    {2^{-(n-m)(\gamma-\beta)}2^{-m(\gamma-\delta)}}\chi^m_y(x).
\end{align*}
Exploiting Young's inequality for convolutions therefore shows that
\begin{align*}
    \bigg\|\bigg(&\sum_{n\geq 0}\Big|\mint{-}_{Q(0,4\cdot 2^{-n})}
    \frac{|\Gamma_{x+h,x}(\bf_n(x)-\bf(x))|_\zeta}
    {2^{-n(\gamma-\zeta)}}\,\mathrm{d}h\Big|^q\bigg)^\frac{1}{q}\bigg\|_{L^p} \\ &\lesssim
    \sum_{\delta\geq\zeta}\bigg\|\bigg(\sum_{n\geq 0}\Big|\sum_{y\in\Lambda_n}\sum_{h\in\E_{n+1}^0}
    \frac{|\fbar^{(n)}(y)-\Gamma_{y,y+h}\fbar^{(n+1)}(y{+}h)|_\delta}
    {2^{-n(\gamma-\delta)}}\chi^n_y(x)\Big|^q\bigg)^\frac{1}{q}\bigg\|_{L^p}.
\end{align*}
Thus, Remark \ref{rem:consistency1} finally ensures that a bound of required type holds.

Let us finally discuss the remaining term from \eqref{decomp}. To this end, choose $0<\eta<1$
such that $\gamma':= \gamma-\zeta+|\s|(\eta-1)/\eta>0$. We then proceed as follows:
\begin{align*}
    \mint{-}_{Q(0,4\cdot 2^{-n})}&
    \frac{|\bf_n(x{+}h)-\bf(x{+}h)|_\zeta}{2^{-n(\gamma-\zeta)}}\,\mathrm{d}h \\
    &\lesssim\sum_{m\geq n}\mint{-}_{Q(0,4\cdot 2^{-n})}
    \frac{|\bf_m(x{+}h)-\bf_{m+1}(x{+}h)|_\beta}{2^{-n(\gamma-\zeta)}}\,\mathrm{d}h \\
    &\lesssim\sum_{\beta\geq\zeta}\sum_{m\geq n}
    \sum_{\substack{y\in\Lambda_m \\ \|y{-}x\|_\s\leq C2^{-n}}}\sum_{h\in\E^0_{m+1}}
    \frac{|\fbar^{(m)}(y)-\Gamma_{y,y+h}\fbar^{(m+1)}(y{+}h)|_\beta}
    {2^{-(n-m)|\s|}2^{-(n-m)(\gamma-\zeta)}2^{-m(\gamma-\beta)}}.
\end{align*}
Having arrived at this point, note that each term in the sum over $m\geq n$ can itself be bounded by
\begin{align*}
    2^{(n-m)\gamma'}\Big\{\mathcal{M}\Big(\Big|\sum_{y\in\Lambda_m}\sum_{h\in\E^0_{m+1}}
    \frac{|\fbar^{(m)}(y)-\Gamma_{y,y+h}\fbar^{(m+1)}(y{+}h)|_\beta}
    {2^{-m(\gamma-\beta)}}\chi^m_y\Big|^\eta\Big)(x)\Big\}^\frac{1}{\eta}.
\end{align*}
Now, apply first Young's inequality for convolutions and then
the vector-valued maximal inequality. Having done this, it only remains 
to resort another time to the bound of Remark \ref{rem:consistency1} in order
to conclude the argument.
\end{proof}

\begin{remark}
As a consequence of the bounds contained in Proposition \ref{prop:forth} and Proposition \ref{prop:back},
the elementary embeddings presented in Remark \ref{embeddingsSeqSpaces} carry over immediately
to the spaces $\F^\gamma_{p,q}$ and $\mathcal{B}^\gamma_{p,q}$.
More elaborate embeddings follow in the next section.
\end{remark}

\section{Embedding theorems} \label{sec:embedding}
It is the aim of this section to provide embeddings for (and between) the two scales $\F^{\gamma}_{p,q}$
and $\mathcal{B}^\gamma_{p,q}$. Embeddings with respect to the scale of Besov spaces were already
proven by Hairer and Labb\'{e} in \cite[Sec.\ 4]{Hairer:2017}. Therefore, we focus on statements which always include
at least once the scale of Triebel--Lizorkin modelled distributions. Recall that 
in the course of the last section we have already proven the following embeddings
\begin{align*}
    \F^\gamma_{p,q}\subset \F^\gamma_{p,\bar{q}},\quad
    \mathcal{B}^{\gamma}_{p,q\wedge p}\subset\F^\gamma_{p,q}\subset\mathcal{B}^{\gamma}_{p,q\vee p}.
\end{align*}
Here, $1 \leq q \leq \bar{q} \leq \infty$, $1 \leq p <\infty$ and $\gamma > 0$.
The main result of this section provides further embeddings and reads as follows.
We also want to remind the reader at this point of Assumption \ref{gamma}.

\begin{proposition}\label{embedding}
    Consider $0<\gamma\leq\bar{\gamma}$, $1\leq q,\bar{q},r\leq \infty$ as well as $1\leq p,\bar{p}<\infty$.
    Then, it holds
    \begin{itemize}\itemsep3pt
    \item[i)]   $\F^{\bar{\gamma}}_{p,\bar{q}}\subset \F^\gamma_{p,q},\quad$ if \quad $q<\bar{q}$ 
                \quad and \quad $\gamma<\bar{\gamma}$,
    \item[ii)]  $\F^{\bar{\gamma}}_{\bar{p},q}\subset \F^\gamma_{p,r},\quad$ if \quad $\bar{p}<p$ \quad and 
                \quad $\gamma<\bar{\gamma}-|\s|(1/\bar{p}-1/p)$,
    \item[iii)] $\F^{\bar{\gamma}}_{\bar{p},q}\subset \mathcal{B}^\gamma_{p,\bar{p}},\quad$ if \quad $\bar{p}<p$ 
                \quad and \quad $\gamma<\bar{\gamma}-|\s|(1/\bar{p}-1/p)$.
\end{itemize}
\end{proposition}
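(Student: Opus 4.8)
The plan is to transfer everything to the discrete scales of Definition \ref{def:fbar}: given a modelled distribution in the larger space $\F^{\bar\gamma}$, I pass via Proposition \ref{prop:forth} to the associated sequence $\fbar\in\Fbar^{\bar\gamma}$ defined by the local volume means \eqref{average}, I truncate it by applying the canonical projection $\Q_{<\gamma}$ onto $\T^-_\gamma$, I verify the three defining bounds of the relevant target discrete space for $\Q_{<\gamma}\fbar$, and finally I reconstruct a modelled distribution in $\F^\gamma$ (resp.\ $\mathcal{B}^\gamma$) via Proposition \ref{prop:back} (resp.\ its Besov analogue together with the equivalence $\bar{\mathcal{B}}\cong\mathcal{B}$ of \cite[Thm~2.15]{Hairer:2017}). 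The estimates on the level of sequences are variations of those already carried out in Propositions \ref{prop:forth} and \ref{prop:back}, the two new features being a single-scale Sobolev rescaling, needed for parts (ii)--(iii), and, everywhere, a truncation error coming from $\Q_{<\gamma}$ not commuting with the structure group.

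For part (i), when writing out the translation and consistency quantities of $\Q_{<\gamma}\fbar$ in $\Fbar^\gamma_{p,q}$ for $\zeta\in\A_\gamma$, the ``diagonal'' part --- the one built literally from $\Q_\zeta(\fbar^{(n)}(y{+}h)-\Gamma_{y{+}h,y}\fbar^{(n)}(y))$ and $\Q_\zeta(\fbar^{(n)}(y)-\fbar^{(n+1)}(y))$ --- is exactly $2^{-n(\bar\gamma-\gamma)}$ times the corresponding $\Fbar^{\bar\gamma}_{p,\bar{q}}$-quantity; since $q<\bar{q}$, H\"{o}lder's inequality in $n$ with exponents $\bar{q}$ and $s$, $1/q=1/\bar{q}+1/s$, together with $\|(2^{-n(\bar\gamma-\gamma)})_n\|_{\ell^s}<\infty$, turns this into the bound $\vvvert\fbar\vvvert_{\Fbar^{\bar\gamma}_{p,\bar{q}}}$. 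The genuinely new term is $\Q_\zeta\Gamma_{y{+}h,y}\Q_{\geq\gamma}\fbar^{(n)}(y)$, which by the translation bound on $\Gamma$ and $\|h\|_\s\lesssim 2^{-n}$ is dominated by $\sum_{\beta\in\A\cap[\gamma,\bar\gamma)}2^{-n(\beta-\gamma)}|\fbar^{(n)}(z^n_x)|_\beta$. Here Assumption \ref{gamma} is decisive: $\gamma\notin\A$ forces $\beta-\gamma>0$ for every such $\beta$, so after bounding the $\ell^q$-norm over $n$ by the $\ell^1$-norm and using Minkowski's inequality, this contribution is controlled by $\sum_\beta\bigl(\sum_n 2^{-n(\beta-\gamma)}\bigr)\sup_n\||\fbar^{(n)}(y)|_\beta\|_{\ell^p_n}\lesssim\vvvert\fbar\vvvert_{\Fbar^{\bar\gamma}_{p,\bar{q}}}$ by the local bound of Remark \ref{rem:sup_Lp}. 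The level-zero local bound is unchanged.

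Parts (ii) and (iii) rest on the per-scale inequality
\begin{align*}
\Bigl\|\sum_{y\in\Lambda_n}a^n_y\chi^n_y\Bigr\|_{L^p}\leq 2^{n|\s|(1/\bar{p}-1/p)}\Bigl\|\sum_{y\in\Lambda_n}a^n_y\chi^n_y\Bigr\|_{L^{\bar{p}}},\qquad a^n_y\geq 0,
\end{align*}
which follows from the disjointness of the cubes $Q^n_y$ and the inclusion $\ell^{\bar{p}}\subset\ell^p$. Applying it scale by scale to the translation and consistency sums --- for part (ii) after bounding the $\ell^r$-norm over $n$ by the $\ell^1$-norm, for part (iii) inside the $\ell^{\bar{p}}$-norm over $n$ which is the outermost norm of the Besov scale --- and factoring out $2^{-n(\bar\gamma-\gamma)}$, one is left with a geometric series of ratio $2^{|\s|(1/\bar{p}-1/p)-(\bar\gamma-\gamma)}$, which is summable precisely because $\gamma<\bar\gamma-|\s|(1/\bar{p}-1/p)$; each individual scale is bounded by $\vvvert\fbar\vvvert_{\Fbar^{\bar\gamma}_{\bar{p},q}}$ since a single summand is pointwise dominated by the full $\ell^q$-sum. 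The level-zero local bound is checked directly (it costs nothing, again by $\ell^{\bar{p}}\subset\ell^p$ on the unit lattice together with bounded overlap), and the truncation error from $\Q_{<\gamma}$ is handled exactly as in part (i), now estimating $\||\fbar^{(n)}(y)|_\beta\|_{\ell^p_n}\leq 2^{n|\s|(1/\bar{p}-1/p)}\||\fbar^{(n)}(y)|_\beta\|_{\ell^{\bar{p}}_n}\lesssim 2^{n|\s|(1/\bar{p}-1/p)}\vvvert\fbar\vvvert_{\Fbar^{\bar\gamma}_{\bar{p},q}}$.

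I expect the delicate point to be precisely the control of this off-diagonal truncation error: unlike in the H\"{o}lder setting, for $q<\infty$ the naive truncation $\Q_{<\gamma}\bf$ of a modelled distribution need not itself be a modelled distribution of the lower regularity, and it is only the strictly positive gain $2^{-n(\beta-\gamma)}$ forced on each intermediate homogeneity $\beta\in\A\cap[\gamma,\bar\gamma)$ by Assumption \ref{gamma}, combined with the local bounds of Remark \ref{rem:sup_Lp}, that makes these terms summable; for parts (ii)--(iii) one must in addition keep careful track that this gain is not undone by the Sobolev loss $2^{n|\s|(1/\bar{p}-1/p)}$. The diagonal contributions, by contrast, are routine rearrangements of the arguments from Propositions \ref{prop:forth} and \ref{prop:back}, combined with the elementary $\ell$-space inclusions and H\"{o}lder's inequality.
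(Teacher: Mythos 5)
Part (i) of your sketch is broadly sound and essentially matches the paper's reference to a ``straightforward modification'' of the Besov-scale argument: the diagonal contribution pays $2^{-n(\bar\gamma-\gamma)}$ and is absorbed by H\"{o}lder in $n$ with $1/q=1/\bar{q}+1/s$, and the off-diagonal truncation error $\sum_{\beta\in\A\cap[\gamma,\bar\gamma)}2^{-n(\beta-\gamma)}|\fbar^{(n)}|_\beta$ is summable in $n$ because Assumption~\ref{gamma} gives $\beta-\gamma>0$ and Remark~\ref{rem:sup_Lp} gives the sup-in-$n$ bound at the higher level $\bar\gamma$.

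There is, however, a genuine gap in your treatment of parts (ii) and (iii). You handle the truncation error by paying the entire Sobolev cost $2^{n|\s|(1/\bar{p}-1/p)}$ at a single scale via the per-scale estimate
$\||\fbar^{(n)}|_\beta\|_{\ell^p_n}\leq 2^{n|\s|(1/\bar{p}-1/p)}\||\fbar^{(n)}|_\beta\|_{\ell^{\bar{p}}_n}$,
and then sum $\sum_n 2^{-n(\beta-\gamma)}2^{n|\s|(1/\bar{p}-1/p)}$. But $\beta$ ranges over $\A\cap[\gamma,\bar\gamma)$, and nothing in the hypotheses forces $\beta-\gamma>|\s|(1/\bar{p}-1/p)$ for every such $\beta$: the hypothesis $\gamma<\bar\gamma-|\s|(1/\bar{p}-1/p)$ constrains $\bar\gamma-\gamma$, not $\beta-\gamma$ for the intermediate homogeneities. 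If $\A$ has a homogeneity $\beta$ just above $\gamma$ (which Assumption~\ref{gamma} allows, it only forbids $\gamma\in\A$), your geometric series diverges. You flag this tension in your last paragraph but do not resolve it. The paper resolves it by \emph{not} paying the Sobolev cost all at once: it steps down one homogeneity level at a time (a recursion over the finite set $\A_{\bar\gamma}$), at each step introducing the intermediate integrability $p_{\bar\zeta}\in[\bar{p},p]$ determined by $\bar\zeta=\bar\gamma-|\s|(1/\bar{p}-1/p_{\bar\zeta})$; this reduces the truncation error to the single homogeneity being dropped and requires the \emph{boundary-case} Sobolev embedding $\delta=\delta_1-|\s|(1/p_1-1/p)$, which your crude $\ell^1$ argument cannot give (it would produce $\sum_n 1=\infty$). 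That boundary case is precisely the content of Lemma~\ref{lemmaEmb} (Jawerth's interpolation trick with the distribution function), which is the key tool missing from your sketch. Likewise, for part (iii) the paper does not argue directly at the level of sequence spaces: it first establishes the $q=\infty$ case and then uses real interpolation (Peetre's $K$-functional) between $\bar{\mathcal{B}}^{\gamma\pm\varepsilon}_{p,\infty}$ to recover the sharp secondary index $\bar{p}$. Your direct per-scale argument has the same truncation-error gap and, beyond that, would not obviously produce the $\ell^{\bar{p}}$ index rather than $\ell^1$ or $\ell^q$.
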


\begin{remark}
The first embedding is the pendant of the corresponding embedding for the Besov scale of modelled
distributions from \cite{Hairer:2017}. The second and final one represent analogues of well-known 
embeddings for the classical Triebel--Lizorkin scale due to Jawerth \cite{Jawerth:1977}.
\end{remark}

Before we dive into the proof of these embeddings, we provide some helpful results. 
In order to obtain the embedding $\F^{\bar{\gamma}}_{\bar{p},q}\subset \F^\gamma_{p,r}$,
we will rely several times on the following bound. 

\begin{lemma}\label{lemmaEmb}
    Consider $1\leq p_1<p<\infty$, $1\leq q,r\leq \infty$ and $0<\delta\leq\delta_1$ 
    such that $\delta\leq\delta_1-|\s|(1/p_1{-}1/p)$. Furthermore, let 
    $u_n\colon\Lambda_n\to\R$, $n\geq 0$, be a sequence of functions. 
    Then, the following bound holds
    \begin{align*}
        \bigg\|\Big\|\sum_{y\in\Lambda_n}\frac{u_n(y)}{2^{-n\delta}}\chi^n_y(x)\Big\|_{\ell^r}\bigg\|_{L^p}
        \lesssim \bigg\|\Big\|\sum_{y\in\Lambda_n}\frac{u_n(y)}{2^{-n\delta_1}}
        \chi^n_y(x)\Big\|_{\ell^q}\bigg\|_{L^{p_1}}.
    \end{align*}
\end{lemma}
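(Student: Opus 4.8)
The plan is to pass to the step functions $F_n(x):=\sum_{y\in\Lambda_n}u_n(y)\chi^n_y(x)$, which are constant on the dyadic cubes $Q^n_y$ of generation $n$; both sides of the asserted inequality then become mixed Lebesgue--sequence norms of $(2^{n\delta}F_n)_{n}$ and $(2^{n\delta_1}F_n)_{n}$, respectively. Since $\|(a_n)_n\|_{\ell^r}\leq\|(a_n)_n\|_{\ell^1}$ for every $r\geq 1$ and $\|(a_n)_n\|_{\ell^q}\geq\|(a_n)_n\|_{\ell^\infty}$ for every $q\in[1,\infty]$, both with constant one, it suffices to prove the single endpoint bound
\[
\Big\|\sum_{n\geq 0}2^{n\delta}|F_n(x)|\Big\|_{L^p}\lesssim\|G\|_{L^{p_1}},\qquad G(x):=\sup_{n\geq 0}2^{n\delta_1}|F_n(x)|.
\]
Writing $\kappa:=\delta_1-\delta$, $\nu:=|\s|(1/p_1-1/p)$ and $V_n:=2^{n\delta_1}|F_n|\leq G$, the hypothesis reads $\kappa\geq\nu>0$ and the goal becomes $\big\|\sum_{n\geq 0}2^{-n\kappa}V_n\big\|_{L^p}\lesssim\|G\|_{L^{p_1}}$.

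The elementary engine is a reverse Hölder (Bernstein-type) inequality for step functions: if $F$ is constant on $\{Q^n_y\}_{y\in\Lambda_n}$ then $\|F\|_{L^{p'}}=2^{-n|\s|/p'}\big\|(F(y))_{y\in\Lambda_n}\big\|_{\ell^{p'}}$ for any $p'$, so $\ell^{p_1}\subseteq\ell^p$ (as $p_1\leq p$) gives $\|F\|_{L^p}\leq 2^{n\nu}\|F\|_{L^{p_1}}$. Applied to $F=V_n$ and combined with $V_n\leq G$ this yields $\|V_n\|_{L^p}\leq 2^{n\nu}\|G\|_{L^{p_1}}$ for every $n\geq 0$. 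In the non-critical range $\kappa>\nu$ (i.e.\ when the hypothesis holds with strict inequality) the proof is then immediate: by the triangle inequality in $L^p$,
\[
\Big\|\sum_{n\geq 0}2^{-n\kappa}V_n\Big\|_{L^p}\leq\sum_{n\geq 0}2^{-n\kappa}\|V_n\|_{L^p}\leq\|G\|_{L^{p_1}}\sum_{n\geq 0}2^{-n(\kappa-\nu)}\lesssim\|G\|_{L^{p_1}}.
\]

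The main obstacle is the borderline case $\kappa=\nu$, where this geometric series diverges and one must argue more carefully, following the method of Jawerth \cite{Jawerth:1977}. I would pass to the distribution function of $S:=\sum_{n\geq 0}2^{-n\kappa}V_n$: given $\lambda>0$, split $S(x)=\sum_{n<N(x)}2^{-n\kappa}V_n(x)+\sum_{n\geq N(x)}2^{-n\kappa}V_n(x)$ at a point-dependent scale $N(x)$ chosen so that the tail, which is $\lesssim 2^{-N(x)\kappa}G(x)$, stays below $\lambda$; the head then runs over only finitely many scales and, restricted to the level set where the Hardy--Littlewood maximal function satisfies $\mathcal{M}(G^{p_1})\lesssim\lambda^{p_1}$, is controlled by applying the reverse Hölder inequality at scale $N(x)$ together with the weak-type $(1,1)$ bound for $\mathcal{M}$ (recall $G^{p_1}\in L^1$). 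Collecting the resulting estimates for $|\{S>\lambda\}|$ and integrating against $\lambda^{p-1}\,\mathrm{d}\lambda$ yields the claim; the point-dependent choice of $N(x)$ is exactly what removes the logarithmic loss a scale-independent splitting would incur, and the anisotropic scaling $\s$ enters only through the volumes $|Q^n_y|=2^{-n|\s|}$, so it requires no change to the scheme.
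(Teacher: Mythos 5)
Your setup is right and the non-critical case argument is a clean variant of the paper's proof: the reduction to $\ell^1$ on the left and $\ell^\infty$ on the right, the reverse H\"older bound $\|F\|_{L^p}\leq 2^{n|\s|(1/p_1-1/p)}\|F\|_{L^{p_1}}$ for scale-$n$ step functions, and the observation that the strict-inequality regime $\kappa>\nu$ is trivial by the triangle inequality are all correct. The paper does not separate out this easy subcase, so that part is a useful simplification.

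The borderline case $\kappa=\nu$, however, has a gap. As sketched, the maximal-function route gives at best the level-set inclusion $\{S>C\lambda\}\subseteq\{\mathcal{M}(G^{p_1})\gtrsim\lambda^{p_1}\}$, whence by the weak-type $(1,1)$ bound $|\{S>C\lambda\}|\lesssim\lambda^{-p_1}\|G\|_{L^{p_1}}^{p_1}$. But $\int_1^\infty\lambda^{p-1}\lambda^{-p_1}\,\mathrm{d}\lambda=\infty$ whenever $p>p_1$, so integrating this against $\lambda^{p-1}\,\mathrm{d}\lambda$ does not yield the claim; and the point-dependent $N(x)$, as used here only to make the tail small, does not repair this. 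What you are missing is the \emph{deterministic} head estimate that produces the level stretching $\lambda\mapsto\lambda^{p/p_1}$. This is exactly what the paper's proof supplies: after normalizing the right-hand side to one, the per-scale $L^{p_1}$ bound $\|V_n\|_{L^{p_1}}\leq 1$ combined with the $\ell^\infty$-form of your reverse H\"older inequality gives the uniform pointwise bound $V_n(x)\leq 2^{n|\s|/p_1}$, hence $\sum_{n<N}2^{-n\kappa}V_n(x)\lesssim 2^{N|\s|/p}$ with \emph{no} maximal function. Choosing $N$ (depending on the level $\lambda$, not on $x$) so that $2^{N|\s|/p}\approx\lambda$ forces the tail to exceed $c\lambda$ on $\{S>C\lambda\}$, which in turn forces $G(x)\gtrsim 2^{N\kappa}\lambda\approx\lambda^{p/p_1}$. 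The inclusion $\{S>C\lambda\}\subseteq\{G>c\lambda^{p/p_1}\}$ then integrates to the genuine $L^{p_1}$ norm of $G$ (not merely a weak norm or an $L^p$ norm of the maximal function), which is the content of the paper's \eqref{lower}, \eqref{upper}, and the choice of $N(k)$. Replacing your $\mathcal{M}(G^{p_1})$ step by this deterministic head bound would close the argument.
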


\begin{proof}
    We use an interpolation trick from Jawerth, cf.\ also the book of Triebel 
    \cite[Sec. 2.7.1]{Triebel:1983}. For, we first remark that we may assume 
    without loss of generality that the right hand side equals one. Furthermore,
    let us also recall some notation. For a measurable set $M\subset\R^d$ we denote by 
    $|M|$ its Lebesgue measure. If $h$ is a measurable function, its associated
    distribution function is given by $\lambda_h(t)=|\{x\in\R^d\colon h(x)>t\}|$, where $t>0$.
    
    We observe first that
    \begin{align*}
        \sup_{y\in\Lambda_n}\frac{|u_n(y)|}{2^{-n\delta}} \leq 2^{n\frac{|\s|}{p}}
        \Big\|\frac{u_n(y)}{2^{-n\delta}}\Big\|_{\ell^{p}_n} \leq 2^{n\frac{|\s|}{p}},
    \end{align*}
    uniformly over $n\geq 0$. In particular, there exists $K>0$ such that 
    \begin{align}\label{lower}
        \sum_{n=0}^N\Big|\sum_{y\in\Lambda_n}\frac{u_n(y)}{2^{-n\delta}}\chi^n_y(x)\Big|^r
        \leq K2^{N\frac{|\s|}{p}r}, 
    \end{align}
    as well as
    \begin{align}\label{upper}
        \sum_{n\geq N+1}\Big|\sum_{y\in\Lambda_n}\frac{u_n(y)}{2^{-n\delta}}
        \chi^n_y(x)\Big|^r\leq K2^{N(\delta-\delta_1)r}\sup_{n\geq 0}
        \Big|\sum_{y\in\Lambda_n}\frac{u_n(y)}{2^{-n\delta_1}}\chi^n_y(x)\Big|^r,
    \end{align}
    uniformly over all $N\geq 0$ and all $x\in\R^d$. Next, we bound
    \begin{align*}
        \bigg\|\Big\|\sum_{y\in\Lambda_n}\frac{u_n(y)}{2^{-n\delta}}\chi^n_y(x)\Big\|_{\ell^r}\bigg\|_{L^p}
        \lesssim \sum_{\substack{k\in\Z \\ 2^k\leq (2K)^\frac{1}{r}}}& 2^{kp}
        \lambda_{\big\|\sum\limits_{y\in\Lambda_n}\frac{u_n(y)}{2^{-n\delta}}\chi^n_y\big\|_{\ell^r}}(2^k) \\
        &+\sum_{\substack{k\in\Z \\ 2^k> (2K)^\frac{1}{r}}} 2^{kp}
        \lambda_{\big\|\sum\limits_{y\in\Lambda_n}\frac{u_n(y)}{2^{-n\delta}}\chi^n_y\big\|_{\ell^r}}(2^k).
    \end{align*}
    In the following, we will bound each of the two sums separately. Beginning with the first term,
    we note that due to \eqref{upper} with $N=-1$ we have
    \begin{align*}
        \sum_{\substack{k\in\Z \\ 2^k\leq (2K)^\frac{1}{r}}}
        2^{kp}\lambda_{\big\|\sum\limits_{y\in\Lambda_n}\frac{u_n(y)}{2^{-n\delta}}\chi^n_y\big\|_{\ell^r}}(2^k)
        &\lesssim \sum_{\substack{k\in\Z \\ 2^k\leq (2K)^\frac{1}{r}}}2^{kp_1}
        \lambda_{\sup\limits_{n\geq 0}\big|\sum\limits_{y\in\Lambda_n}\frac{u_n(y)}{2^{-n\delta_1}}\chi^n_y\big|^r}(2^{kr}) \\
        &\lesssim \sum_{k\in\Z} 2^{kp_1}\lambda_{\big\|\sum\limits_{y\in\Lambda_n}\frac{u_n(y)}
        {2^{-n\delta_1}}\chi^n_y\big\|_{\ell^q}}(2^k) \lesssim 1,
    \end{align*}
    which is a bound of required order. With regard to the second term, we first choose for each
    $k\in\Z$ with the property $2^k>(2K)^{1/r}$ the uniquely determined integer $N=N(k)\in\N$
    which is maximal with respect to $K2^{Nr|\s|/p}\leq 2^{-1}2^{kr}$. Observe that then
    $2^{kr}2^{N|\s|(\delta-\delta')r} \sim 2^{kr\frac{p}{p_1}}$ uniformly over all $k\in\Z$
    with the property $2^k>(2K)^{1/r}$. In particular, we obtain due to \eqref{lower} and \eqref{upper}
    the bound
    \begin{align*}
        \sum_{\substack{k\in\Z \\ 2^k> (2K)^\frac{1}{r}}} 2^{kp}
        \lambda_{\big\|\sum\limits_{y\in\Lambda_n}\frac{u_n(y)}{2^{-n\delta}}\chi^n_y\big\|_{\ell^r}}(2^k)
        &\lesssim \sum_{\substack{k\in\Z}} \big(2^{\frac{p}{p_1}}\big)^{kp_1}
        \lambda_{\sup\limits_{n\geq 0}\big|\sum\limits_{y\in\Lambda_n}\frac{u_n(y)}{2^{-n\delta_1}}
        \chi^n_y\big|}(2^{\frac{p}{p_1}k}),
    \end{align*}
    which is again of required order. This concludes the proof.
\end{proof}

Secondly, with respect to the embedding $\F^{\bar{\gamma}}_{\bar{p},q}\subset \mathcal{B}^\gamma_{p,\bar{p}}$
it turns out to be useful to make use of the real interpolation method based on Peetre's $K$-functional.
Therefore, let us introduce some notation. For two real Banach spaces $X$ and $Y$, which are both
continuously embedded into another real Banach space $Z$ (in our case of interest, $Z=L^p$),
we define the functional
\begin{align*}
    K(t,z) := \inf_{\substack{z=x+y \\ x\in X,\, y\in Y}}\big(\|x\|_X + t\|y\|_X\big).
\end{align*}
For $0<\theta<1$ and $1\leq q\leq \infty$, we then denote by $(X,Y)_{\theta,q}$ the subset of
all elements $z\in X+Y$ such that
\begin{align*}
    \|z\|_{(X,Y)_{\theta,q}} := \bigg(\int_{0}^\infty\frac{\mathrm{d}t}{t}\,
    \Big|\frac{K(t,z)}{t^\theta}\Big|^q\bigg)^\frac{1}{q} < \infty.
\end{align*}
It is a well known fact that the space $(X,Y)_{\theta,q}$ with the norm $\|\cdot\|_{(X,Y)_{\theta,q}}$
itself constitutes a real Banach space. Now, we eventually have all ingredients in place in order
to prove the asserted embedding relations.

\begin{proof}[Proof (of Proposition \ref{embedding})]
    Due to Proposition \ref{prop:forth} and Proposition \ref{prop:back} it suffices to establish
    the asserted embeddings on the level of the sequence spaces $\Fbar$ and $\bar{\mathcal{B}}$.
    
    \textit{i)} Here, a straightforward modification of the proof of the second case of 
    \cite[Thm 4.1]{Hairer:2017} also works in the case of the
    Triebel--Lizorkin scale of modelled distributions. We leave the details to the
    interested reader.
    
    \textit{ii)} Put $\bar{\zeta}=\max\A_{\bar{\gamma}}$ and let us first consider the case 
    $\gamma\in (\bar{\zeta},\bar{\gamma})$. For $\zeta\in\A_\gamma$, we define the functions
    \begin{align*}
        u_n^\zeta(y) = \sum_{h\in\E_n}|\fbar^{(n)}(y{+}h)-\Gamma_{y+h,y}\fbar^{(n)}(y)|_\zeta,
        \quad n\geq 0,\quad y\in\Lambda_n.
    \end{align*}
    As the local bound is an immediate consequence of $\ell^{\bar{p}}\subset\ell^p$,
    the asserted embedding relation then follows directly from Lemma \ref{lemmaEmb}.
    For what follows, it is crucial that this result holds true even if 
    $\gamma=\bar{\gamma}-|\s|(1/\bar{p}-1/p)$.
    
    Now, let us turn to the case $\gamma\in(\ubar{\zeta},\bar{\zeta})$, where
    $\ubar{\zeta}=\max\,(\A_{\bar{\gamma}}\setminus\{\bar{\zeta}\})$.
    Before we give the proof of the required bounds, let us remark that after having
    established this particular case, one obtains the general statement by a recursion over 
    the (finite) set of homogeneities $\A_{\bar{\gamma}}$. 
    
    Now, in order to verify the asserted embedding in the case $\gamma\in(\ubar{\zeta},\bar{\zeta})$,
    we follow the argument in \cite{Hairer:2017}. We consider
    the case where $\bar{\gamma}-|\s|(1/\bar{p}-1/p)\leq\bar{\zeta}$. Then,
    for every $\varepsilon\geq 0$ such that $\bar{\zeta}+\varepsilon < \bar{\gamma}$
    we denote by $p_{\bar{\zeta}^\varepsilon}\in[\bar{p},p]$
    the uniquely determined number with the property
    \begin{align*}
        \bar{\zeta}+\varepsilon = \bar{\gamma}-|\s|(1/\bar{p}-1/p_{\bar{\zeta}^\varepsilon}).
    \end{align*}
    In a first step, we remark that it suffices to show that 
    \begin{align}\label{1st}
    \Q_{<\gamma}\fbar\in\Fbar^{\bar{\zeta}}_{p_{\bar{\zeta}},r}, \quad
    \vvvert\Q_{<\gamma}\fbar\vvvert_{\Fbar^{\bar{\zeta}}_{p_{\bar{\zeta}},r}}
    \lesssim \vvvert\fbar\vvvert_{\Fbar^{\bar{\gamma}}_{\bar{p},q}}.
    \end{align}
    Indeed, we observe that $\A_\gamma=\A_{\bar{\zeta}}$ and $\gamma<\bar{\zeta}$.
    Now, if $p=p_{\bar{\zeta}}$ then $\Fbar^{\bar{\zeta}}_{p,r}\subset\Fbar^\gamma_{p,r}$
    is nothing else than an already established embedding. On the other side, if we have $p_{\bar{\zeta}}<p$
    then because of $\gamma<\bar{\gamma}-|\s|(1/\bar{p}-1/p) = \bar{\zeta}-|\s|(1/p_{\bar{\zeta}}-1/p)$
    we can employ the bound from the first case and obtain 
    $\Fbar^{\bar{\zeta}}_{p_{\bar{\zeta}},r}\subset\Fbar^\gamma_{p,r}$.
    In order to obtain \eqref{1st}, it in turn suffices to show that
    \begin{align}\label{2nd}
        \vvvert\Q_{<\gamma}\fbar\vvvert_{\Fbar^{\gamma'}_{p_{\bar{\zeta}},r}}
        \lesssim\vvvert\fbar\vvvert_{\Fbar^{\bar{\gamma}}_{\bar{p},q}}
    \end{align}
    uniformly over all $\gamma'\in(\ubar{\zeta},\bar{\zeta})$. Indeed, \eqref{1st}
    follows immediately from \eqref{2nd} by an application of Fatou's Lemma. On the 
    other side, for a proof of \eqref{2nd} it suffices to bound
    \begin{align}\label{3rd}
         \vvvert\Q_{<\gamma}\fbar\vvvert_{\Fbar^{\gamma'}_{p_{\bar{\zeta}^\varepsilon},r}}
        \lesssim\vvvert\fbar\vvvert_{\Fbar^{\bar{\gamma}}_{\bar{p},q}},
    \end{align}
    uniformly over all $\varepsilon>0$ such that $\bar{\zeta}+\varepsilon<\bar{\gamma}$
    and all $\gamma'\in (\ubar{\zeta},\bar{\zeta})$. Indeed, let 
    $\gamma'_\varepsilon=\gamma'-|\s|(1/p_{\bar{\zeta}^\varepsilon}-1/p_{\bar{\zeta}})$.
    Then, for sufficiently small $\varepsilon>0$ we have $\A_{\gamma'_\varepsilon}=\A_{\gamma'}$
    and by an application of Fatou's Lemma as well as the first case we have
    \begin{align*}
        \vvvert\Q_{<\gamma}\fbar\vvvert_{\Fbar^{\gamma'}_{p_{\bar{\zeta}},r}}
        \leq\varliminf_{\varepsilon \to 0}
        \vvvert\Q_{<\gamma}\fbar\vvvert_{\Fbar^{\gamma'_\varepsilon}_{p_{\bar{\zeta}},r}}
        \lesssim\varliminf_{\varepsilon \to 0}
        \vvvert\Q_{<\gamma}\fbar\vvvert_{\Fbar^{\gamma'}_{p_{\bar{\zeta}^\varepsilon},r}},
    \end{align*}
    with a proportionality constant of required order, i.e.\ the bound in \eqref{2nd} follows. 
    Thus, we are finally left with a proof of \eqref{3rd}. To this end, we first make use of the fact that
    \begin{equation}
    \begin{aligned}\label{decomp2}
        |\Q_{<\gamma}&(\fbar^{(n)}(y{+}h)-\Gamma_{y+h,h}\fbar^{(n)}(y))|_\zeta
        \\ &\hspace{1cm}\leq|\fbar^{(n)}(y{+}h)-\Gamma_{y+h,h}\fbar^{(n)}(y)|_\zeta +
        |\Gamma_{y+h,y}\Q_{\bar{\zeta}}\fbar^{(n)}(y)|_\zeta,
    \end{aligned}
    \end{equation}
    which holds for all $\zeta\in\A_{\gamma'}$. Of course, for the first term on the 
    right hand side of this bound we again simply apply Lemma \ref{lemmaEmb}. With respect to
    the second quantity we have
    \begin{align*}
        \bigg\|\bigg(\sum_{n\geq 0}\sum_{h\in\E_n}&\Big|
        \sum_{y\in\Lambda_n}\frac{|\Gamma_{y+h,y}\Q_{\bar{\zeta}}\fbar^{(n)}(y)|_\zeta}
        {2^{-n(\gamma-\zeta)}}\chi^n_y(x)\Big|^r\bigg)^\frac{1}{r}\bigg\|_{L^{p_{\bar{\zeta}^\varepsilon}}} \\
        &\lesssim\Big\|\sup_{n\geq 0}\sum_{y\in\Lambda_n}
        |\Q_{\bar{\zeta}}\fbar^{(n)}(y)|_{\bar{\zeta}}\chi^n_y(x)\Big\|_{L^{p_{\bar{\zeta}^\varepsilon}}}
        \big\|\big(2^{n(\gamma-\bar{\zeta})}\big)_n\big\|_{\ell^r} \\
        &\lesssim \Big\|\sup_{n\geq 0}\sum_{y\in\Lambda_n}
        |\Q_{\bar{\zeta}}\fbar^{(n)}(y)|_{\bar{\zeta}}\chi^n_y(x)\Big\|_{L^{p_{\bar{\zeta}^\varepsilon}}}.
    \end{align*}
    Now, as in the proof of Remark \ref{rem:sup_Lp} one establishes the existence
    of a constant $K>0$ such that
    \begin{align*}
        \Big\|\sup_{0\leq n\leq N+1}\sum_{y\in\Lambda_n}
        |\Q_{\bar{\zeta}}\fbar^{(n)}(y)|_{\bar{\zeta}}\chi^n_y(x)\Big\|_{L^{p_{\bar{\zeta}^\varepsilon}}}
        &\leq \Big\|\sup_{0\leq n\leq N}\sum_{y\in\Lambda_n}
        |\Q_{\bar{\zeta}}\fbar^{(n)}(y)|_{\bar{\zeta}}\chi^n_y(x)\Big\|_{L^{p_{\bar{\zeta}^\varepsilon}}} \\
        &\hspace{-5.5cm}+K\bigg\|\bigg(\sum_{n\geq 0}\sum_{h\in\E_{n+1}^0}\Big|
        \sum_{y\in\Lambda_n}\frac{|\fbar^{(n)}(y{+}h)-\Gamma_{y+h,y}\fbar^{(n)}(y)|_\zeta}
        {2^{-n\varepsilon}}\chi^n_y(x)\Big|^r\bigg)^\frac{1}{r}\bigg\|_{L^{p_{\bar{\zeta}^\varepsilon}}},
    \end{align*}
    uniformly over all $N\geq 0$. Hence, by induction over $N\geq 0$ and with the help of
    Remark \ref{rem:consistency1} as well as Lemma \ref{lemmaEmb} we obtain
    \begin{align*}
        \Big\|\sup_{n\geq 0}\sum_{y\in\Lambda_n}
        |\Q_{\bar{\zeta}}\fbar^{(n)}(y)|_{\bar{\zeta}}\chi^n_y(x)\Big\|_{L^{p_{\bar{\zeta}^\varepsilon}}}
        \lesssim \vvvert\fbar\vvvert_{\Fbar^{\bar{\gamma}}_{\bar{p},q}},
    \end{align*}
    uniformly over the required data.
    
    It finally remains to consider the case $\bar{\gamma}-|\s|(1/\bar{p}-1/p)>\bar{\zeta}$,
    where still $\gamma\in(\ubar{\zeta},\bar{\zeta})$. Here, we find $\varepsilon>0$
    such that $\bar{\zeta}+\varepsilon<\bar{\gamma}-|\s|(1/\bar{p}-1/p)$.
    Then, again by making use of the bound \eqref{decomp2} and slightly modifying
    the following argumentation shows that
    \begin{align*}
        \vvvert\Q_{<\gamma}\fbar\vvvert_{\Fbar^\gamma_{p,r}} \lesssim
        \vvvert\fbar\vvvert_{\Fbar^{\bar{\gamma}}_{\bar{p},q}}.
    \end{align*}
    This concludes the discussion of the second embedding.
    
    \textit{iii)} First, choose $\varepsilon > 0$ such that $\gamma\pm\varepsilon$
    satisfy Assumption \ref{gamma}, $\A_\gamma=\A_{\gamma\pm\varepsilon}$ as well as 
    $\gamma\pm\varepsilon < \bar{\gamma} - |\s|(1/\bar{p}{-}1/p)$. We already know that
    $\F^{\bar{\gamma}}_{\bar{p},\infty} \subset \F^{\gamma\pm\varepsilon}_{p,\infty}
    \subset\mathcal{B}^{\gamma\pm\varepsilon}_{p,\infty}$, i.e.\ it follows from
    the theory of real interpolation that
    \begin{align*}
        \big(\F^{\bar{\gamma}}_{\bar{p},\infty},\,\F^{\bar{\gamma}}_{\bar{p},\infty}\big)_{1/2,\bar{p}} 
        \subset \big(\mathcal{B}^{\gamma+\varepsilon}_{p,\infty},\,
        \mathcal{B}^{\gamma-\varepsilon}_{p,\infty}\big)_{1/2,\bar{p}}.
    \end{align*}
    Furthermore, it is trivial to see that $\F^{\bar{\gamma}}_{\bar{p},\infty}\subset
    \big(\F^{\bar{\gamma}}_{\bar{p},\infty},\,\F^{\bar{\gamma}}_{\bar{p},\infty}\big)_{1/2,\bar{p}}$.
    Thus, the asserted relation follows at once if we have shown that
    \begin{align*}
        \big(\bar{\mathcal{B}}^{\gamma+\varepsilon}_{p,\infty},\,
        \bar{\mathcal{B}}^{\gamma-\varepsilon}_{p,\infty}\big)_{1/2,\bar{p}}
        \subset \bar{\mathcal{B}}^\gamma_{p,\bar{p}}.
    \end{align*}
    The corresponding local bound is trivial. Hence, we only concentrate on the
    translation bound in detail. To this end, we follow the argument in the
    book of Triebel \cite[Sec.\ 2.4.2]{Triebel:1983}.
    Consider $\bf\in\big(\bar{\mathcal{B}}^{\gamma+\varepsilon}_{p,\infty},\,
    \bar{\mathcal{B}}^{\gamma-\varepsilon}_{p,\infty}\big)_{1/2,\bar{p}}$. In addition, 
    we consider $\fbar_1\in\bar{\mathcal{B}}^{\gamma+\varepsilon}_{p,\infty}$ and 
    $\fbar_2\in\bar{\mathcal{B}}^{\gamma-\varepsilon}_{p,\infty}$ such that 
    $\fbar = \fbar_1 + \fbar_2$. Then, we have the bound
    \begin{align*}
        \sum_{h\in\E_n}\Big\|\frac{|\fbar^{(n)}(y{+}h)-\Gamma_{y+h,y}\fbar^{(n)}(y)|_\zeta}
        {2^{-n(\gamma-\zeta)}}\Big\|^{\bar{p}}_{\ell^p_n}
        \lesssim 2^{-n\varepsilon\bar{p}}\big|\vvvert\fbar_1\vvvert_{\bar{\mathcal{B}}^{\gamma+\varepsilon}_{p,\infty}}
        +2^{2n\varepsilon}\vvvert\fbar_2\vvvert_{\bar{\mathcal{B}}^{\gamma-\varepsilon}_{p,\infty}}\big|^{\bar{p}},
    \end{align*}
    uniformly over all $n\geq 0$. Thus, by taking the infimum we deduce that
    \begin{align*}
        \bigg(\sum_{n\geq 0}\Big\|\frac{|\fbar^{(n)}(y{+}h)-\Gamma_{y+h,y}\fbar^{(n)}(y)|_\zeta}
        {2^{-n(\gamma-\zeta)}}\Big\|^{\bar{p}}_{\ell^p_n}\bigg)^{1/\bar{p}} 
        &\lesssim \bigg(\sum_{n\geq 0}\big|2^{-n\varepsilon}
        K(2^{2n\varepsilon},\fbar)\big|^{\bar{p}}\bigg)^{1/\bar{p}} 
        \\ &\lesssim\|\fbar\|_{(\bar{\mathcal{B}}^{\gamma+\varepsilon}_{p,\infty},\,
        \bar{\mathcal{B}}^{\gamma-\varepsilon}_{p,\infty})_{1/2,\bar{p}}}.
    \end{align*}
    As this is the asserted bound, we can conclude the argument.
\end{proof}

\section{The reconstruction operator} \label{sec:reconstruction}\newcommand{\bnorm}[1]{\bigg\| #1 \bigg\|}
\newcommand{\norm}[1]{\left\lVert #1 \right\rVert}
\newcommand{\nnorm}[1]{\vvvert  #1 \vvvert}
\newcommand{\bp}[1]{\left( #1 \right)}
\newcommand{\bv}[1]{\left\vert #1 \right\vert}

This section is devoted to the proof of the reconstruction theorem. The setting we work with consists of a fixed regularity structure $(\A, \T, \G)$ 
which we will always ask to include the polynomial regularity structure $(\Abar, \Tbar, \Gbar).$ Furthermore, models
for such a regularity structure are always understood to act canonically on the polynomial structure.

In addition, it is a powerful ingredient of the theory of regularity structures that it
comes with an appropriate notion of distance between two modelled distributions. We want to emphasize the fact
that the theory in particular allows for the situation, where two such modelled distributions are modelled with
respect to two \textit{different} models. Of course, we want to translate the corresponding notion
from \cite{Hairer:2014} to the scale $\F^\gamma_{p,q}$ and therefore define
\begin{align*}
    & \vvvert\bf; \fbar\vvvert  = \sup_{\zeta} \norm{|\bf(x) - \fbar(x)|_{\zeta}}_{L^p}  \\ 
    & +  \sup_{\zeta} \bigg\|\Big\|\mint{-}_{Q(0, 4 \lambda)} 
    \frac{|\bf (x{+}h) - \fbar(x{+}h) - \Gamma_{x+h,x} \bf (x) + \bar{\Gamma}_{x+h, x}\fbar(x)|_{\zeta}}
    {\lambda^{\gamma - \zeta}}\,\mathrm{d}h\Big\|_{L^q_{\lambda}}\bigg\|_{L^p}.
\end{align*}
The reconstruction theorem then reads as follows.
\begin{theorem}\label{reconstruction_theorem}
Let  $(\mathcal{A}, \mathcal{T}, \mathcal{G})$ be a regularity structure and $(\Pi, \Gamma)$ 
be a model as above. Furthermore, consider $\gamma \in \R_+ \setminus \N$ and
$1\leq p <\infty$ as well as $1\leq q \leq \infty$. 
We set $\alpha = \min (\mathcal{A} \setminus \N)\wedge \gamma. $ If $q<\infty$, choose $\bar{\alpha} < \alpha.$ 
In the case $q=\infty$, we let $\bar{\alpha}=\alpha$. Then, there exists a unique continuous linear map 
$\Recon\colon \F^{\gamma}_{ p, q} \to F^{\bar{\alpha}}_{p, q} $ such that
\begin{equation}\label{reconstruction_bound}
\bigg\|\Big\|\sup_{\eta \in \B^r} \frac{|\langle \Recon \bf - \Pi_x \bf (x), \eta^{\lambda}_{x} \rangle|}{\lambda^{\gamma}} \Big\|_{L^q_{\lambda}}\bigg\|_{L^p} \lesssim \vvvert\bf\vvvert_{\F^\gamma_{p,q}}\|\Pi\|( 1 + \|\Gamma\|),
\end{equation}
uniformly over all modelled distributions and all models.
Furthermore given a second model $(\bar{\Pi}, \bar{\Gamma})$ and denoting by $\bar{\Recon}$ the associated reconstruction operator, we have
\begin{equation}\label{lipschitz_reconstruction_bound}
\begin{aligned}
     \bigg\|&\Big\| \sup_{\eta \in \B^r} \frac{|\langle \Recon \bf - \bar{\Recon}\fbar - \Pi_x \bf(x) + \bar{\Pi}_x \fbar (x),
     \eta^{\lambda}_x  \rangle|}{\lambda^{\gamma}}\Big\|_{L^q_{\lambda}}\bigg\|_{L^p} \\
    & \lesssim \vvvert\bf; \fbar\vvvert \|\Pi\|(1 + \|\Gamma\|) + \vvvert\fbar\vvvert 
    \big(\|\Pi - \bar{\Pi}\|( 1 + \|\Gamma\|) + \|\bar{\Pi}\| \|\Gamma - \bar{\Gamma}\| \big).
\end{aligned}
\end{equation}
\end{theorem}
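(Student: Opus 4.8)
The plan is to adapt the wavelet construction of Hairer and Labb\'e \cite{Hairer:2017} to the Triebel--Lizorkin scale, using the discretisation of Section \ref{sec:modelled} and, as the load-bearing analytic input, the convergence criterion of Proposition \ref{proposition:characterisation_convergence}. Given $\bf\in\F^\gamma_{p,q}$, I would take the discretisation $\fbar\in\Fbar^\gamma_{p,q}$ furnished by Proposition \ref{prop:forth}(i) and set $A^n_y:=\langle\Pi_y\fbar^{(n)}(y),\varphi^n_y\rangle$ for $y\in\Lambda_n$, $n\geq0$, with $\xi_n$ the corresponding partial sums as in Proposition \ref{proposition:characterisation_convergence}. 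The first task is to verify \eqref{convCrit1}--\eqref{convCrit2} for $(A^n_y)$. Bound \eqref{convCrit1} follows from $|A^n_y|\lesssim\|\Pi\|\sum_{\zeta\in\A_\gamma}2^{-n(|\s|/2+\zeta)}|\fbar^{(n)}(y)|_\zeta$ (the defining estimate of a model together with the normalisation of $\varphi^n_y$) combined with the improved local bound \eqref{localBound}; for \eqref{convCrit2} one uses $V_n\subset V_{n+1}$ and $\Pi_y=\Pi_z\Gamma_{z,y}$ to obtain
\begin{align*}
\delta A^n_y=\sum_{z\in\Lambda_{n+1}}\langle\varphi^{n+1}_z,\varphi^n_y\rangle\,
\big\langle\Pi_z\big(\fbar^{(n+1)}(z)-\Gamma_{z,y}\fbar^{(n)}(y)\big),\varphi^{n+1}_z\big\rangle,
\end{align*}
a finite sum with $|\langle\varphi^{n+1}_z,\varphi^n_y\rangle|\lesssim2^{-|\s|/2}$; inserting the model bound and decomposing $\fbar^{(n+1)}(z)-\Gamma_{z,y}\fbar^{(n)}(y)$ into a consistency difference between levels $n$ and $n{+}1$ and a level-$n$ translation difference (after passing from $z\in\Lambda_{n+1}$ to a neighbouring point of $\Lambda_n$) reduces the left-hand side to the quantities appearing in Remarks \ref{rem:consistency1}--\ref{rem:consistency2} and in parts (ii)--(iii) of Definition \ref{def:fbar}, giving \eqref{convCrit2} with constant $\lesssim\|\Pi\|(1+\|\Gamma\|)\vvvert\fbar\vvvert_{\Fbar^\gamma_{p,q}}$. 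Proposition \ref{proposition:characterisation_convergence} then yields $\Recon\bf:=\lim_n\xi_n\in F^{\bar\alpha}_{p,q}$ (and $\in F^\alpha_{p,q}$ when $q=\infty$); linearity is immediate, and combining the above with Proposition \ref{prop:forth}(i) gives $\|\Recon\bf\|_{F^{\bar\alpha}_{p,q}}\lesssim\|\Pi\|(1+\|\Gamma\|)\vvvert\bf\vvvert_{\F^\gamma_{p,q}}$.

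For the reconstruction bound \eqref{reconstruction_bound} I would fix $\lambda\in(2^{-(n+1)},2^{-n}]$ and split, writing $\bf_n(x)=\Gamma_{x,x_n}\fbar^{(n)}(x_n)$ so that $\Pi_x\bf_n(x)=\Pi_{x_n}\fbar^{(n)}(x_n)$,
\begin{align*}
\langle\Recon\bf-\Pi_x\bf(x),\eta^\lambda_x\rangle
&=\langle\Recon\bf-\xi_n,\eta^\lambda_x\rangle+\langle\xi_n-\Pi_{x_n}\fbar^{(n)}(x_n),\eta^\lambda_x\rangle\\
&\quad+\langle\Pi_x(\bf_n(x)-\bf(x)),\eta^\lambda_x\rangle.
\end{align*}
The last term is controlled by the pointwise estimate for $|\bf(x)-\bf_n(x)|_\zeta$ established inside the proof of Proposition \ref{prop:forth}(ii): after division by $\lambda^\gamma$ and dyadic decomposition of $\|\cdot\|_{L^q_\lambda}$ it reproduces exactly the translation part of $\vvvert\bf\vvvert_{\F^\gamma_{p,q}}$. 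For the first term one expands $\Recon\bf-\xi_n=\sum_{m\geq n}(\xi_{m+1}-\xi_m)$ and pairs the pieces $g_m,\delta\xi_m$ (notation of the proof of Proposition \ref{proposition:characterisation_convergence}) with $\eta^\lambda_x$; since $m\geq n$ the wavelets now sit at scales finer than $\lambda$, and estimates of the type \eqref{largeScales}--\eqref{smallScales} (with the roles of $\lambda$ and $2^{-m}$ exchanged) followed by Young's and Fefferman--Stein's inequalities bound the contribution by $2^{-n\gamma}$ times a quantity $\lesssim\|\Pi\|(1+\|\Gamma\|)\vvvert\fbar\vvvert_{\Fbar^\gamma_{p,q}}$. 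The middle term is handled as in \cite{Hairer:2017}: comparing $\xi_n$ with the orthogonal projection $P_n\Pi_{x_n}\fbar^{(n)}(x_n)$ produces a sum of level-$n$ translation differences over neighbours $y-x_n\in\E_n^M$, bounded via Remark \ref{rem:consistency2} with the gain $2^{-n\gamma}$, while $\langle(1-P_n)\Pi_{x_n}\fbar^{(n)}(x_n),\eta^\lambda_x\rangle$ is estimated by wavelet bounds of type \eqref{smallScales}, the integer homogeneities dropping out thanks to the vanishing moments of the $\psi\in\Psi$. Assembling the three contributions and using $\|\cdot\|_{L^q_\lambda}^q=\sum_{n\geq0}\|\cdot\|^q_{L^q((2^{-(n+1)},2^{-n}],\,\lambda^{-1}\mathrm{d}\lambda)}$ then gives \eqref{reconstruction_bound}.

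Uniqueness is the standard argument: the difference of two maps obeying \eqref{reconstruction_bound} is a distribution $\zeta$ with $\|\,\|\sup_{\eta}|\langle\zeta,\eta^\lambda_x\rangle|\lambda^{-\gamma}\|_{L^q_\lambda}\|_{L^p}<\infty$ and $\gamma>0$, which forces $\zeta=0$ (e.g.\ all its wavelet coefficients vanish, cf.\ \cite{Hairer:2014}). For \eqref{lipschitz_reconstruction_bound} one runs the construction in parallel for $(\Pi,\Gamma)$ and $(\bar\Pi,\bar\Gamma)$ and estimates the difference of approximants: $A^n_y-\bar A^n_y$ splits as $\langle\Pi_y(\fbar^{(n)}(y)-\gbar^{(n)}(y)),\varphi^n_y\rangle+\langle(\Pi_y-\bar\Pi_y)\gbar^{(n)}(y),\varphi^n_y\rangle$ (with $\gbar$ the discretisation of the second modelled distribution), and similarly $\delta A^n_y-\delta\bar A^n_y$ with an additional piece involving $\Gamma_{z,y}-\bar\Gamma_{z,y}$, so that the roles played by $\vvvert\fbar\vvvert_{\Fbar^\gamma_{p,q}}$ above are taken over, respectively, by $\vvvert\bf;\fbar\vvvert$, by $\|\Pi-\bar\Pi\|\,\vvvert\fbar\vvvert$ and by $\|\bar\Pi\|\,\|\Gamma-\bar\Gamma\|\,\vvvert\fbar\vvvert$; one also uses a two-model version of Proposition \ref{prop:forth}, the difference of the two discretisations being controlled by $\vvvert\bf;\fbar\vvvert+\|\Gamma-\bar\Gamma\|\,\vvvert\fbar\vvvert$. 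I expect the main obstacle to be the verification of \eqref{convCrit2}, namely producing the identity for $\delta A^n_y$ and converting $\fbar^{(n+1)}(z)-\Gamma_{z,y}\fbar^{(n)}(y)$ into the consistency and translation quantities of Definition \ref{def:fbar} so that Proposition \ref{proposition:characterisation_convergence} can be invoked; once this is in place, the tail estimate for $\Recon\bf-\xi_n$ in the reconstruction bound is a nontrivial but by now routine re-use of the Fefferman--Stein machinery of Section \ref{sec:wavelets}, and the remaining steps are bookkeeping.
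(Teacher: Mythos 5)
Your overall strategy---discretise via Proposition \ref{prop:forth}, set $A^n_y=\langle\Pi_y\fbar^{(n)}(y),\varphi^n_y\rangle$, verify \eqref{convCrit1}--\eqref{convCrit2} and invoke Proposition \ref{proposition:characterisation_convergence} to build $\Recon\bf$, then prove the bound \eqref{reconstruction_bound} by a three-term decomposition, and finish with the standard uniqueness argument---is exactly the route the paper takes. Steps~1 and~3 and the Lipschitz estimate are fine. The gap is in Step~2, in the way you split up $\Recon\bf-\Pi_x\bf(x)$.

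Your decomposition
\begin{align*}
\langle\Recon\bf-\Pi_x\bf(x),\eta^\lambda_x\rangle
=\langle\Recon\bf-\xi_n,\eta^\lambda_x\rangle
+\langle\xi_n-\Pi_{x_n}\fbar^{(n)}(x_n),\eta^\lambda_x\rangle
+\langle\Pi_x(\bf_n(x)-\bf(x)),\eta^\lambda_x\rangle
\end{align*}
is algebraically correct, and the third term is indeed $O(\lambda^\gamma)$ via the pointwise estimate of Proposition \ref{prop:forth}(ii). However, the first two terms are \emph{not} individually of order $\lambda^\gamma$: they are only of order $\lambda^\alpha$ with $\alpha=\min(\A\setminus\N)\wedge\gamma$, so that $\lambda^{-\gamma}$ times either one diverges like $2^{n(\gamma-\alpha)}$ as $n\to\infty$. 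Concretely, you propose to write $\Recon\bf-\xi_n=\sum_{m\geq n}(g_m+\delta\xi_m)$ and to bound the $\delta\xi_m$ contributions by the small-scale wavelet estimate \eqref{smallScales}. But $\delta\xi_m$ is built from the \emph{raw} coefficients $A^{m+1}_u$, not from differences; after pairing with $\eta^\lambda_x$, summing over $z$ and $m\geq n$, and dividing by $\lambda^\gamma\sim 2^{-n\gamma}$, this gives a quantity of size $2^{n(\gamma-\zeta)}|\fbar^{(\cdot)}|_\zeta$ with $\zeta\in\A_\gamma$, which is unbounded. The same happens for $\langle(1-P_n)\Pi_{x_n}\fbar^{(n)}(x_n),\eta^\lambda_x\rangle$, which you propose to estimate separately: the vanishing moments of $\psi$ kill the polynomial sectors, but the non-integer sectors give $\sum_\zeta 2^{n(\gamma-\zeta)}|\fbar^{(n)}(x_n)|_\zeta$, again divergent.

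The essential point is that the two divergent pieces must be paired so that the cancellation $\delta\xi_m-\mathcal{P}^{\bot}_m\Pi_\cdot$ turns raw coefficients into differences, and this is precisely what the paper's decomposition
\begin{align*}
\Recon\bf-\Pi_x\bf(x)=\big(\Recon_n\bf-\mathcal{P}_n\Pi_x\bf(x)\big)
+\sum_{m\geq n}\big(\Recon_{m+1}\bf-\Recon_m\bf-\mathcal{P}^{\bot}_m\Pi_x\bf(x)\big)
\end{align*}
achieves, by further grouping $\delta f_m$ together with $\mathcal{P}^{\bot}_m\Pi_x\bf(x)$. Then
$A^{m+1}_y-\langle\Pi_x\bf(x),\varphi^{m+1}_y\rangle
=\langle\Pi_y(\fbar^{(m+1)}(y)-\Gamma_{y,x}\bf(x)),\varphi^{m+1}_y\rangle$
is a difference whose size is $\sim\lambda^{\gamma-\zeta}$, and the sum over $m\geq n$ converges to the required $O(\lambda^\gamma)$ bound. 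To repair your argument you would need to combine $\langle(1-P_n)\Pi_{x_n}\fbar^{(n)}(x_n),\eta^\lambda_x\rangle$ with the $\delta\xi_m$-part of $\langle\Recon\bf-\xi_n,\eta^\lambda_x\rangle$ rather than estimate them separately; after that modification your approach would coincide with the paper's.
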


\begin{proof}
We will proceed as in \cite{Hairer:2017} and thus we will build a discrete approximation of the reconstruction
operator, namely we start with $\bf \in \F^{\gamma}_{p,q}$ and build $\fbar$ as in Proposition \ref{prop:forth}. 
Then, we define for $n \in \N$ and $x \in \Lambda_n$
$$ A^n_x = \langle \Pi_x \fbar^{(n)}(x), \varphi^n_x\rangle
$$
as well as $$ \Recon_n \bf = \sum_{x \in \Lambda_n} A^n_x\varphi^n_x.
$$
Now, we follow the original paper and divide the proof in three steps. For simplicity, we assume 
in the following that $q < \infty.$ As we have already discussed previously, the case $q = \infty$ 
follows essentially in the same way by replacing sums with suprema. The different choice of $\bar{\alpha}$ 
for $q = \infty $ is a direct consequence of the improved convergence result in Proposition 
\ref{proposition:characterisation_convergence}.

\textit{Step 1:} We prove convergence of $\Recon_n\bf \to \Recon \bf$ in $F^{\bar{\alpha}}_{p,q}$ for $\bar{\alpha} < \alpha \wedge 0.$ To see this, we only need to prove the assumptions of Proposition 
\ref{proposition:characterisation_convergence}. We start by estimating: 
$$ |A^n_x| \le \sum_{\gamma > \zeta}\norm{\Pi} 2^{- n (\frac{| \s |}{2}  + \zeta)}|\fbar^{(n)}(x)|_{\zeta}.$$
Hence, by applying the bound from Remark \ref{rem:sup_Lp} and the definition of $\alpha$ 
\begin{align*}
\bigg\| \sup_{n\geq 0} \sum_{y\in\Lambda_n}\frac{|A^n_y|}
{2^{-n(\alpha+\frac{|\s|}{2})p}}\bigg \|_{L^p} &\lesssim \sup_{\zeta} 
\bigg\| \sup_{n \ge 0} \sum_{y \in \Lambda_n}|\bf(y)|_{\zeta} \chi^n_y(x) 
\bigg\|_{L^p} \\ &\lesssim \nnorm{\fbar}_{\bar{\mathcal{F}}^{\gamma}_{p,q}} \lesssim 
\vvvert\bf\vvvert_{\F^\gamma_{p,q}},
\end{align*}
while for the second bound we have 
\begin{align*}
    |\delta A^n_x | = & \ \Big|\sum_{y \in \Lambda_{n+1}}\langle \Pi_y \fbar^{(n+1)}(y) - \Pi_x \fbar^{(n)}(x), \varphi^{n+1}_y \rangle \langle \varphi^{n+1}_y, \varphi^n_x\rangle\Big| \\
    \lesssim & \ \sup_{\zeta} \sum_{\substack{y \in \Lambda_{n+1} \\ \norm{y - x}_{\s} \le C2^{-n}}} | \fbar^{(n)}(x) - \Gamma_{x,y} \fbar^{(n+1)}(y)|_{\zeta}2^{-n(\frac{|\s|}{2} + \zeta )}
\end{align*}\\ 
so that we indeed obtain the second required bound of Proposition \ref{proposition:characterisation_convergence}:
\begin{align*}
     & \bigg\|\bigg(\sum_{n\geq 0}\Big|\sum_{y\in\Lambda_n} \frac{|\delta A^n_y|}{2^{-n(\gamma+\frac{|\s|}{2})}}\chi^n_y(x) \Big|^q\bigg)^\frac{1}{q}\bigg\|_{L^p} \\
     & \lesssim \bigg\|\bigg(\sum_{n\geq 0}\Big|\sum_{y\in\Lambda_n} \sum_{h \in \E^{C,0}_{n+1}} \frac{|\fbar^{(n)}(y) - \Gamma_{y,y+h} \fbar^{(n+1)}(y+h)|_{\zeta}}{2^{-n(\gamma- \zeta)}}\chi^n_y(x) \Big|^q\bigg)^\frac{1}{q}\bigg\|_{L^p}, 
\end{align*}
which is bounded by the norm of $\bf$ thanks to Remark \ref{rem:consistency1}. Hence, the sequence $\Recon_n \bf $ 
converges to some distribution $\Recon \bf$ in $\F^{\bar{\alpha}}_{p,q}$. From the reconstruction bound, 
which we will prove below, we then recover the convergence of $\Recon_n\bf$ in $\F^{\bar{\alpha}}_{p,q}$ 
for all $\bar{\alpha} < \alpha$, even if $\alpha>0$. The proof of this is the same as in the paper on the Besov scale 
by Hairer and Labb\'{e} \cite{Hairer:2017} and we omit it here.

\textit{Step 2:} We prove the reconstruction bound (\ref{reconstruction_bound}). To this end, 
for any $n \in \N $ we write $$\Recon \bf - \Pi_x \bf (x) = \Recon_{n} \bf - \mathcal{P}_{n} \Pi_x \bf (x)+ \sum_{m \ge n} \Recon_{m+1}\bf - \Recon_{m}
\bf - \mathcal{P}^{\bot}_{m}\Pi_x \bf(x), $$
where $\mathcal{P}_n$ is the projection onto $V_n$ and $\mathcal{P}^{\bot}_n$ is the projection 
onto the orthogonal complement $V_n^{\bot}$ of $V_n$ in $V_{n+1}.$ 
We will treat separately the terms of order equal to $\lambda$ and the terms of higher order, 
that is for any $\lambda$ we choose $n$ such that  $\lambda \in [2^{-n}, 2^{-n+1})$.
Then (cf.\ \cite[Rem 2.2]{Hairer:2017}), we find immediately that
\begin{align*}
	\bigg\|\Big\|\sup_{\eta \in \B^r} \frac{|\langle \Recon \bf - \Pi_x \bf (x), \eta^{\lambda}_{x} \rangle|}{\lambda^{\gamma}}\Big\|_{L^q_{\lambda}}\bigg\|_{L^p} \simeq 
	\bigg\|\Big\|\sup_{\eta \in \B^r} \frac{|\langle \Recon \bf - \Pi_x \bf (x), \eta^{2^{-n}}_{x} \rangle|}{2^{-n\gamma}}\Big\|_{\ell^q}\bigg\|_{L^p}
\end{align*}
as well as
$$ \Recon_{n} \bf - \mathcal{P}_{n} \Pi_x \bf (x) = \sum_{y \in \Lambda_{n}} (A^{n}_y - \langle \Pi_x \bf(x) , \varphi^{n}_y \rangle )\varphi^{n}_y.$$ 
For terms of order $n$, we get the following bound uniformly in $n$ and with 
the constant $C$ depending only on the support of $\phi$:
\begin{align*}
	|\langle \Recon_{n} \bf - \mathcal{P}_{n}  \Pi_x \bf (x) , \eta^{2^{-n}}_x \rangle | & \ = \Big|\sum_{y \in \Lambda_{n}} (A^{n}_y - \langle \Pi_x \bf(x) , \varphi^{n}_y \rangle ) \langle \varphi^{n}_y, \eta^{2^{-n}}_x \rangle \Big| \\ 
	&\hspace{-2cm}\lesssim  \sum_{ \substack{y \in \Lambda_{n} \\ \norm{y - x}_\s \le C2^{-n}} } 
	\mint{-}_{Q(y, 2^{-n})}
	|\langle \Pi_z (\bf(z) - \Gamma_{z, x} \bf(x)), 2^{n \frac{|\s|}{2}}\varphi^{n}_y \rangle |\,\mathrm{d}z  \\
	&\hspace{-2cm}\lesssim  \ \sup_{\zeta} \mint{-}_{Q(0, 2C2^{-n})} 
	\frac{|\bf(x{+}h) - \Gamma_{x+h, x} \bf(x)|_{\zeta}}{2^{n \zeta}}\,\mathrm{d}h,
\end{align*}
where in the last inequality we used that the sum is actually finite, uniformly in $n$. In this way, we can find 
a bound of required order for the quantity
\begin{align*}
	\bigg\|\Big\|\sup_{\eta \in \B^r}\frac{ | \langle \Recon_{n} \bf - \mathcal{P}_{n}  \Pi_x \bf (x) , \eta^{2^{-n}}_x \rangle |}{2^{-n \gamma}}\Big\|_{\ell^q}\bigg\|_{L^p}.
\end{align*}
Now, we pass to the terms of order greater than $n$. For this, we make use of the decomposition 
$ \Recon_{m+1}\bf - \Recon_{m} \bf = g_m + \delta f_m,$ where $g_m \in V_m$ and $\delta f_m \in V_m^{\bot}.$ 
In the sequel, we treat contributions from $\sum_{m \ge n} g_m$ and 
$\sum_{m \ge n} \big(\delta f_m - \mathcal{P}^{\bot}_m \Pi_x\bf(x)\big)$ 
differently. We start with the latter one, and following \cite{Hairer:2017} we find that
\begin{align*}
    |\langle \delta f_m & - \mathcal{P}^{\bot}_m \Pi_x\bf(x), \eta^{2^{-n}}_{x}\rangle| \\ 
    &\hspace{0.7cm}=  \Big|\sum_{\substack{y \in \Lambda_{m+ 1} \\ z \in \Lambda_{m}}} \! (A^{m+1}_y - \langle \Pi_x \bf(x),
    \varphi^{m+1}_{y}  \rangle) \langle \varphi^{m+1}_y , \psi^m_z \rangle \langle \psi^m_z , \eta^{2^{-n}}_x \rangle\Big| \\
    &\hspace{0.7cm}\lesssim \ \sup_{\zeta} \frac{ 2^{-m( r + \zeta)} }{2^{-nr}}
    \mint{-}_{Q(0, C'2^{-n})} |\bf(x{+}h) - \Gamma_{x+h, x} \bf(x)|_{\zeta}\,\mathrm{d}h.
\end{align*}
So, as before we can now easily bound the contribution from
\begin{align*}
	\bigg\|\big\|\sup_{\eta \in \B^r} \sum_{m \ge n}\frac{  |\langle \delta f_m - \mathcal{P}^{\bot}_m \Pi_x\bf(x), \eta^{2^{-n}}_{x}\rangle|}{2^{-n \gamma}}\Big\|_{\ell^q}\bigg\|_{L^p}
\end{align*}
in terms of the norm of $\bf$ and the norms of the model, as asserted. 
Finally we also estimate the contribution from $g_m$ as follows:
\begin{align*}
    |\langle g_m, \ \eta^{2^{-n}}_x \rangle |
    = & \ \Big|\sum_{ \substack{ y \in \Lambda_m \\ z \in \Lambda_{m+1} } } \langle \Pi_z \fbar^{(m+1)}(z) - \Pi_y \fbar^{(m)}(y), \varphi^{m+1}_z \rangle \langle \varphi^{m+1}_z, \varphi^{m}_y \rangle \langle \varphi^{m}_y, \eta^{2^{-n}}_{x} \ \Big| \\
   \lesssim & \sup_{\zeta} \! \sum_{y \in \Lambda^{C,x}_{m} } \sum_{ h \in \mathcal{E}^{C, 0}_{m+1}}  
   | \fbar^{(m)}(y{+}h) - \Gamma_{y+h,y} \fbar^{(m+1)}(y)|_{\zeta} 2^{-(m-n)|\s| -m \zeta}, 
\end{align*}
where we used the notation from the proof of Proposition \ref{prop:waveletBounds} with $n_0$ replaced by $n$. 
In fact, following the same line of argument of the proof of Proposition \ref{prop:waveletBounds} 
for the small scales, with $\eta \in (0,1)$ such that $\gamma + \frac{\eta - 1}{\eta}q |\s| > 0$ 
and with the resulting weight function $\theta(z) = 2^{-z(\gamma + \frac{\eta-1}{\eta} q|\s|)}$, 
where $z\geq 0$, we can now bound
\begin{align*}
	& \bnorm{ \Big\|\sup_{\eta \in \B^r} \sum_{m \ge n} \frac{ | \langle g_m, \eta^{2^{-n}}_x \rangle |}
	{2^{-n \gamma}}\Big\|_{\ell^q(n)}}_{L^p} \\
	& \lesssim \bigg\|\bigg(\sum_{n\geq 0}\Big|\sum_{y\in\Lambda_n}\sum_{h\in\E_n^M}
    \frac{|\fbar^{(n)}(y{+}h)-\Gamma_{y+h,y}\fbar^{(n)}(y)|_\zeta}{2^{-n(\gamma-\zeta)}}
    \chi^n_y(x)\Big|^q\bigg)^\frac{1}{q}\bigg\|_{L^p},
\end{align*}
which in turn is bounded by $\vvvert\fbar\vvvert_{\Fbar^\gamma_{p,q}}$ in view of Remark \ref{rem:consistency2}. Putting together these results we have by triangle inequality that 
\begin{align*}
	 \bigg\|\Big\|\sup_{\eta \in \B^r} &\frac{|\langle \Recon \bf - \Pi_x \bf (x), \eta^{2^{-n}}_{x} \rangle|}{2^{-n\gamma}}\Big\|_{\ell^q}\bigg\|_{L^p} \\
	 &\hspace{1cm}\leq\bigg\|\Big\|\sup_{\eta \in \B^r}\frac{ | \langle \Recon_{n} \bf - \mathcal{P}_{n}  
	 \Pi_x \bf (x) , \eta^{2^{-n}}_x \rangle |}{2^{-n \gamma}}\Big\|_{\ell^q}\bigg\|_{L^p} \\
	 &\hspace{2cm}+ \bigg\|\Big\|\sup_{\eta \in \B^r} \sum_{m \ge n}\frac{  |\langle \delta f_m - 
	 \mathcal{P}^{\bot}_m \Pi_x\bf(x), \eta^{2^{-n}}_{x}\rangle|}{2^{-n \gamma}}\Big\|_{\ell^q}\bigg\|_{L^p} \\
	 &\hspace{2cm}+ \bigg\|\Big\|\sup_{\eta \in \B^r} \sum_{m \ge n} 
	 \frac{ | \langle g_m, \eta^{2^{-n}}_x \rangle |}{2^{-n \gamma}}\Big\|_{\ell^q}\bigg\|_{L^p} \\
	 &\hspace{1cm}\lesssim \vvvert\bf\vvvert_{\F^\gamma_{p,q}}\|\Pi\|( 1 + \|\Gamma\|),
\end{align*}
uniformly over all models. If we take two different models, the reconstruction bound follows on the same line of argument
as presented in \cite{Hairer:2017}.

\textit{Step 3:} We prove the uniqueness of the reconstruction operator. For this purpose, 
we remind that for any Schwartz distribution $\psi$ and for any smooth test function $\rho$ supported in the unit ball, 
the convolution $\psi * \rho^{\delta} (x) = \langle \psi, \rho^{\delta}_x\rangle$ 
converges in distribution to $\psi$ as $\rho$ approximates a delta, i.e.\ if $\delta \to 0.$ 
Now, assume there are two distributions $\xi^1$ and $\xi^2$, which both satisfy the reconstruction bound
\eqref{reconstruction_bound}. Then, for any $\delta > 0 $ choose $n\in\N$ such that $\delta \in [2^{-n}, 2^{-n+1}).$ 
Now, we can bound
\begin{align*}
    \Big\|\frac{|\langle \xi^1 - \xi^2, \rho^{\delta}_x \rangle|}{\delta^{\gamma}}\Big\|_{L^p} &  
    \lesssim \bigg\|\int_{2^{-n}}^{2^{-n+1}}\sup_{\eta \in \B^r} 
    \frac{| \langle \xi^1 - \xi^2, \eta^{\lambda}_x \rangle |}{\lambda^{\gamma}}
		\,\frac{\mathrm{d}\lambda}{\lambda}\bigg\|_{L^p}.
\end{align*}
Furthermore, let us write $$f_n = \int_{2^{-n}}^{2^{-n+1}}
\sup_{\eta \in \B^r} \frac{| \langle \xi^1 - \xi^2, \eta^{\lambda}_x
\rangle |}{\lambda^{\gamma}}\,\frac{\mathrm{d}\lambda}{\lambda}.$$ Then, we know from the reconstruction bound that
$\norm{\norm{f_n}_{\ell^q}}_{L^p} < \infty.$ It is then an easy exercise to see that this implies 
$\norm{f_n}_{L^p} \to 0$ as $n \to \infty.$ Since taking $n \to \infty$ is equivalent to 
$\delta \to 0$, we thus find that $\langle \xi^1 - \xi^2, \rho^{\delta}_x \rangle \to 0$ in $L^p(\mathrm{d}x).$ 
Since $L^p$ convergence implies convergence in distribution, this tells us that $\xi^1 - \xi^2 = 0. $ 
Note that this argument does not rely on $\gamma,$ and it tells us that there is at most one distribution 
that can satisfy the reconstruction bound. This concludes the proof of the recontruction theorem.
\end{proof}

\begin{remark}\label{reconstruction_bound2}
For given $C>0$, we denote more generally by $\B^r_C(\R^d)$ the subspace of smooth functions on $\R^d$
which have a $\mathcal{C}^r$-norm bounded by 1 and which are supported in the cube $Q(0,C)$. A thorough investigation
of the proof of the reconstruction theorem then shows that we are also equipped with the bound
\begin{align}
    \bigg\|\Big\| \sup_{\eta \in \B^r_C(\R^d)} \frac{|\langle\Recon\bf-\Pi_x\bf(x), 
    \eta^{\lambda}_{x} \rangle|}{\lambda^{\gamma}}\Big\|_{L^q_\lambda}\bigg\|_{L^p}
    \lesssim_C \vvvert\bf\vvvert_{\F^\gamma_{p,q}}.
\end{align}
Of course, this is also true for the respective bound incorporating the case of two (in general different) models.
\end{remark}

\begin{remark}\label{reconstruction_bound3}
There is another version of the reconstruction bound which we would like to mention at this point. 
More precisely, it follows from the identity
\begin{align*}
    \langle\Recon\bf-\Pi_{x{+}h}&\bf(x{+}h),\eta^{2^{-n}}_{x{+}h}\rangle \\
    &\hspace{-1cm}= \langle\Recon\bf-\Pi_{x}\bf(x),\eta^{2^{-n}}_{x{+}h} \rangle
    +\langle\Pi_{x{+}h}(\bf(x{+}h)-\Gamma_{x+h,x}\bf(x)),\eta^{2^{-n}}_{x{+}h} \rangle
\end{align*}
and the bound contained in Remark \ref{reconstruction_bound2} that
\begin{align}
    \bigg\|\Big\| \mint{-}_{Q(0,C2^{-n})}\sup_{\eta \in \B^r(\R^d)}
    \frac{|\langle\Recon\bf-\Pi_{x{+}h}\bf(x{+}h),\eta^{2^{-n}}_{x{+}h} \rangle|}
    {2^{-n\gamma}}\,\mathrm{d}h\Big\|_{\ell^q}\bigg\|_{L^p}
    \lesssim_C \vvvert\bf\vvvert_{\F^\gamma_{p,q}},
\end{align}
again uniformly over all $\bf\in\F^\gamma_{p,q}$.
\end{remark}

The remainder of this section is devoted to proving a consequence of the reconstruction theorem,
namely we want to study the special case when the regularity structure at hand is only given by the
polynomial regularity structure $(\bar{\mathcal{A}}, \bar{\mathcal{T}}, \bar{\mathcal{G}})$,
together with the model $(\Pi, \Gamma)$ acting canonically on this structure. 
In this case, it turns out that modelled distributions correspond by means of the reconstruction operator
bijectively to Triebel--Lizorkin distributions over $\R^d.$ This is the content of the following

\begin{proposition}
    Take $\gamma \in \R_+ \setminus \N$, $1 \le p < \infty$ and $1 \le q \le \infty$. Then, 
    the reconstruction operator yields an isomorphism between the Banach spaces 
    $\F^{\gamma}_{p,q}(\bar{\mathcal{T}})$ and $F^{\gamma}_{p,q}(\R^d).$ 
\end{proposition}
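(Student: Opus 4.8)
The plan is to prove three facts: that $\Recon$ maps $\F^\gamma_{p,q}(\Tbar)$ boundedly \emph{into $F^\gamma_{p,q}(\R^d)$} (and not merely into $F^{\bar\alpha}_{p,q}$, as given by Theorem \ref{reconstruction_theorem}), that $\Recon$ is injective, and that $\Recon$ admits a bounded right inverse $S\colon F^\gamma_{p,q}(\R^d)\to\F^\gamma_{p,q}(\Tbar)$. Together these make $\Recon$ a bounded linear bijection with bounded inverse, i.e.\ a Banach space isomorphism: $\Recon S=\mathrm{id}$ yields surjectivity and, combined with injectivity, $S\Recon=\mathrm{id}$, so $S=\Recon^{-1}$ is bounded. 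Throughout, the model is the canonical polynomial one, so $\Pi_x\bf(x)$ is the polynomial $y\mapsto\sum_{|k|_\s<\gamma}\big(\Q_{|k|_\s}\bf(x)\big)(y{-}x)^k$, of scaled degree at most $\lfloor\gamma\rfloor$ by Assumption \ref{gamma}.

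\emph{Boundedness into $F^\gamma_{p,q}$.} Since $\Pi_x\bf(x)$ has scaled degree $\leq\lfloor\gamma\rfloor$, one has $\langle\Pi_x\bf(x),\eta^\lambda_x\rangle=0$ for every $\eta\in\B^r_{\lfloor\gamma\rfloor}$, so that $\sup_{\eta\in\B^r_{\lfloor\gamma\rfloor}}|\langle\Recon\bf,\eta^\lambda_x\rangle|=\sup_{\eta\in\B^r_{\lfloor\gamma\rfloor}}|\langle\Recon\bf-\Pi_x\bf(x),\eta^\lambda_x\rangle|\leq\sup_{\eta\in\B^r}|\langle\Recon\bf-\Pi_x\bf(x),\eta^\lambda_x\rangle|$; applying $\lambda^{-\gamma}$ and the $L^p(L^q_\lambda)$-norm, the second defining bound \eqref{TL2} of $F^\gamma_{p,q}$ is precisely the reconstruction bound \eqref{reconstruction_bound}. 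For the first bound in \eqref{TL2} write $\langle\Recon\bf,\eta_x\rangle=\langle\Recon\bf-\Pi_x\bf(x),\eta_x\rangle+\langle\Pi_x\bf(x),\eta_x\rangle$: the first term is controlled by \eqref{reconstruction_bound} restricted to scales $\lambda\simeq1$, and the second by $\sum_{|k|_\s<\gamma}\big\||\bf(x)|_{|k|_\s}\big\|_{L^p}<\infty$ (any $\eta\in\B^r$ has bounded moments). In particular $\|\Recon\bf\|_{F^\gamma_{p,q}}\lesssim\vvvert\bf\vvvert_{\F^\gamma_{p,q}}$.

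\emph{Injectivity and the right inverse.} If $\Recon\bf=0$, then \eqref{reconstruction_bound} reads $\big\|\,\|\sup_{\eta\in\B^r}\lambda^{-\gamma}|\langle\Pi_x\bf(x),\eta^\lambda_x\rangle|\,\|_{L^q_\lambda}\big\|_{L^p}<\infty$, so the inner norm is finite for a.e.\ $x$. For each multi-index $k$ with $|k|_\s<\gamma$ choose $\eta_k\in\mathcal C^\infty_c(Q(0,1))$ with $\int z^{k'}\eta_k(z)\,\mathrm{d}z=\delta_{k,k'}$ for all such $k'$ (finitely many linear constraints; a suitable multiple of $\eta_k$ lies in $\B^r$); a change of variables gives $\langle\Pi_x\bf(x),(\eta_k)^\lambda_x\rangle=c_k(x)\lambda^{|k|_\s}$ with $c_k(x)$ the $\Xk$-coefficient of $\bf(x)$, whence $|c_k(x)|\cdot\|\lambda^{|k|_\s-\gamma}\|_{L^q((0,1],\mathrm{d}\lambda/\lambda)}<\infty$ for a.e.\ $x$. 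Since $|k|_\s<\gamma$ that weight is infinite for every $q\in[1,\infty]$, forcing $c_k=0$ a.e., i.e.\ $\bf=0$. For the right inverse, given $\xi\in F^\gamma_{p,q}$ take $S\xi$ to be the Taylor jet of $\xi$: with $\xi_n=\mathcal{P}_n\xi\in V_n$ the wavelet projections (smooth, $\xi_n\to\xi$, wavelet coefficients controlled by Proposition \ref{prop:waveletBounds}), set $\fbar^{(n)}(y)=\sum_{|k|_\s<\gamma}\tfrac{1}{k!}D^k\xi_n(y)\,\Xk\in\Tbar_\gamma^-$ for $y\in\Lambda_n$. One checks $\fbar\in\Fbar^\gamma_{p,q}(\Tbar)$ with $\vvvert\fbar\vvvert_{\Fbar^\gamma_{p,q}}\lesssim\|\xi\|_{F^\gamma_{p,q}}$ by writing $D^k\xi_n$ through the wavelet coefficients of $\xi$ and matching the translation and consistency bounds of Definition \ref{def:fbar} against the second bound in \eqref{waveletBounds}; Proposition \ref{prop:back} then delivers $S\xi\in\F^\gamma_{p,q}$ with $\vvvert S\xi\vvvert_{\F^\gamma_{p,q}}\lesssim\|\xi\|_{F^\gamma_{p,q}}$, and $\Pi_x(S\xi)(x)$ is the order-$\gamma$ Taylor polynomial of $\xi$ at $x$. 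Finally $\Recon(S\xi)=\xi$ follows by showing $\big\|\,\|\sup_{\eta\in\B^r}\lambda^{-\gamma}|\langle\xi-\Pi_x(S\xi)(x),\eta^\lambda_x\rangle|\,\|_{L^q_\lambda}\big\|_{L^p}<\infty$ — i.e.\ $\xi$ itself obeys the reconstruction bound relative to $S\xi$ — and invoking the uniqueness part (Step 3) of the proof of Theorem \ref{reconstruction_theorem}.

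\emph{Main obstacle.} The crux is the last claim: that an element of $F^\gamma_{p,q}(\R^d)$ is, in the $L^p(L^q_\lambda)$-averaged sense, locally approximated up to order $\gamma$ by its Taylor polynomial — equivalently, that the volume-mean-of-Taylor-remainder seminorm is equivalent to the $F^\gamma_{p,q}$ seminorm. This is the Triebel--Lizorkin analogue of the classical characterization by volume means of differences (cf.\ Triebel \cite[Sec.\ 2.5.11]{Triebel:1983}); in the present anisotropic setting it is most cleanly obtained by substituting the wavelet expansion of $\xi$, estimating the pairings $\langle\psi^m_y,\eta^\lambda_x\rangle$ exactly as in \eqref{largeScales}--\eqref{smallScales}, comparing against the Taylor remainder of $\xi$, and closing with the Fefferman--Stein maximal inequality as in the proof of Proposition \ref{prop:waveletBounds}. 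The accompanying estimate $\vvvert\fbar\vvvert_{\Fbar^\gamma_{p,q}}\lesssim\|\xi\|_{F^\gamma_{p,q}}$ for the Taylor jet is of the same nature and is dispatched by the same wavelet bookkeeping.
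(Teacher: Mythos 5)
Your three-step plan (boundedness into $F^\gamma_{p,q}$, injectivity, bounded right inverse) matches the architecture of the paper's proof, which splits the statement into two lemmata. Your injectivity argument via moment test functions and the divergence of $\|\lambda^{|k|_\s-\gamma}\|_{L^q_\lambda}$ is a valid variant, but the paper does something more informative: it invokes the uniqueness part of the reconstruction theorem to pin down $\partial^k\Recon\bf = k!\,\Q_k\bf$ for all $|k|_\s<\gamma$, so that $\Recon\bf = \Q_0\bf$ is an explicit $L^p$ function. That yields injectivity for free and makes the first bound of \eqref{TL2} a direct consequence of Young's inequality. Your passage ``controlled by \eqref{reconstruction_bound} restricted to scales $\lambda\simeq 1$'' is too quick: the $L^q_\lambda$-norm controls $\lambda\in(0,1]$ only in average, not at a fixed slice, so the local bound needs a separate argument, which the identity $\Recon\bf=\Q_0\bf$ supplies cleanly.

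The genuine gap is in the right inverse. You define $\fbar^{(n)}(y)$ as the Taylor jet of the wavelet smoothing $\xi_n=\mathcal{P}_n\xi$ and correctly flag that everything hinges on proving $\vvvert\fbar\vvvert_{\Fbar^\gamma_{p,q}}\lesssim\|\xi\|_{F^\gamma_{p,q}}$ (together with the reconstruction bound for $\xi$ against $\Pi_x\bf(x)$) --- but then you assert rather than prove it. This is exactly where the paper invests the real work, and with a different choice of discretisation: $\Q_k\fbar^{(n)}(y)=\langle\partial^k\xi,\,P^{\lfloor\gamma\rfloor}_{k,y}(\rho^n,\cdot)\rangle$, with $P$ built from a fixed mollifier $\rho$ and explicit derivative corrections. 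The point of that choice is that the translation and consistency defects are literally pairings $\langle\partial^k\xi,\Phi^{k,n}_{y,h}\rangle$ and $\langle\partial^k\xi,\Psi^{k,n}_y\rangle$ against test functions $\Phi,\Psi$ supported at scale $2^{-n}$ that are shown to annihilate polynomials of scaled degree $\leq\lfloor\gamma\rfloor-|k|_\s$; since $\partial^k\xi\in F^{\gamma-|k|_\s}_{p,q}$, the needed bounds then drop out directly from \eqref{TL2}, with no further wavelet estimates. Your variant would also produce moment-killing test functions (the Taylor remainder of $D^l\xi_n$ is such a pairing against $\xi$), but verifying their size, support, and $\mathcal{C}^r$-norm scaling uniformly in $(n,y,h)$ is the entire substance of the lemma, and your proposal leaves it out. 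In short: the same architecture, but the paper's choice of $\fbar^{(n)}$ is engineered to make the hardest estimate automatic, whereas your choice postpones, rather than resolves, the difficulty.
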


We separate the proof of this proposition into the following two lemmata. In a first step, we will prove that the reconstruction operator is an injection between the two spaces above. Then, we will show that $\Recon$ is invertible.

\begin{lemma}
    Consider $\bf \in \F^{\gamma}_{p,q}(\bar{\mathcal{T}})$ and for $k \in \N^d$ with $|k|_{\s} < \gamma$, we define the 
    functions $\bf_k(x) = \mathcal{Q}_k \bf (x)$. Then, $k!\bf_k$ is the $k$-th derivative of $\Recon\bf$
    (in the sense of distributions) and $\Recon\bf = \bf_0 \in F^{\gamma}_{p,q}(\R^d)$.
\end{lemma}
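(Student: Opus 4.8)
The plan is to exploit that for the polynomial regularity structure $(\Abar,\Tbar,\Gbar)$ with its canonical model, a modelled distribution $\bf\in\F^\gamma_{p,q}(\Tbar)$ is just a finite family of scalar functions: writing $\bf(x)=\sum_{|k|_\s<\gamma}\bf_k(x)\Xk$ (so that $\Q_k\bf(x)=\bf_k(x)\Xk$, identified with $\bf_k(x)\in\R$), the object $\Pi_x\bf(x)$ is the associated local Taylor polynomial and
\begin{align*}
\langle\Pi_x\bf(x),\eta^\lambda_x\rangle=\sum_{|k|_\s<\gamma}\bf_k(x)\,\lambda^{|k|_\s}\!\int_{\R^d}z^k\eta(z)\,\mathrm{d}z
\end{align*}
for any test function $\eta$. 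First I would note that the local bound i) in the definition of $\F^\gamma_{p,q}$ together with the finite-dimensionality of each $\Tbar_\zeta$ gives $\bf_k\in L^p$ for all $k$ with $|k|_\s<\gamma$.

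The core of the argument is a mollification that upgrades the scale-integrated reconstruction bound \eqref{reconstruction_bound} into an identification of the distribution $\Recon\bf$. Fix a smooth non-negative $\rho$ supported in $Q(0,1)$ with $\int_{\R^d}\rho=(\log 2)^{-1}$, and for $n\geq 1$ set $\Phi_n(\cdot-x):=\int_{2^{-n}}^{2^{-n+1}}\rho^\lambda_x\,\frac{\mathrm{d}\lambda}{\lambda}$. Then $\Phi_n$ is a genuine approximate identity at scale $2^{-n}$ --- integral one, supported in $Q(0,2^{-n+1})$, uniformly bounded in $L^1$ --- so $\langle\Recon\bf,\Phi_n(\cdot-x)\rangle=(\Recon\bf*\check\Phi_n)(x)\to\Recon\bf$ in $\D'(\R^d)$. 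On the other hand, I split $\langle\Recon\bf,\Phi_n(\cdot-x)\rangle$ into $\int_{2^{-n}}^{2^{-n+1}}\langle\Pi_x\bf(x),\rho^\lambda_x\rangle\,\frac{\mathrm{d}\lambda}{\lambda}$ and $f_n(x):=\int_{2^{-n}}^{2^{-n+1}}\langle\Recon\bf-\Pi_x\bf(x),\rho^\lambda_x\rangle\,\frac{\mathrm{d}\lambda}{\lambda}$. Using the Taylor-polynomial formula and $\int_{2^{-n}}^{2^{-n+1}}\lambda^{|k|_\s}\frac{\mathrm{d}\lambda}{\lambda}=O(2^{-n|k|_\s})$ for $|k|_\s>0$, together with the normalization of $\rho$, the first piece equals $\bf_0(x)$ plus a term that tends to $0$ in $L^p$ (since $\bf_k\in L^p$). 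For $f_n$, bounding $|\langle\Recon\bf-\Pi_x\bf(x),\rho^\lambda_x\rangle|\lesssim\lambda^\gamma h(\lambda,x)$ with $h(\lambda,x):=\sup_{\eta\in\B^r}|\langle\Recon\bf-\Pi_x\bf(x),\eta^\lambda_x\rangle|/\lambda^\gamma$ and using $\lambda\le 2^{-n+1}$ gives $|f_n(x)|\lesssim 2^{-n\gamma}G_n(x)$ with $G_n(x):=\|h(\cdot,x)\|_{L^q((2^{-n},2^{-n+1}],\lambda^{-1}\mathrm{d}\lambda)}$; by \eqref{reconstruction_bound} one has $\sum_{n\geq1}G_n(x)^q=\|h(\cdot,x)\|_{L^q_\lambda}^q$ with $L^p$-integrable sum, hence $\sup_n G_n\le(\sum_n G_n^q)^{1/q}\in L^p$ and, when $q<\infty$, $G_n(x)\to0$ for a.e.\ $x$, so $2^{-n\gamma}G_n\to0$ a.e.\ and dominated convergence yields $f_n\to 0$ in $L^p$ (for $q=\infty$ one uses the decaying prefactor $2^{-n\gamma}$ directly). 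Altogether $\langle\Recon\bf,\Phi_n(\cdot-x)\rangle\to\bf_0$ in $L^p$, hence in $\D'$, and therefore $\Recon\bf=\bf_0\in L^p$.

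The statement about derivatives follows by running the same mollification after applying $\partial^k_x$: since $\partial^k_x\rho^\lambda_x=(-1)^{|k|}\lambda^{-|k|_\s}(\partial^k\rho)^\lambda_x$, one gets $\partial^k_x\langle\Recon\bf,\Phi_n(\cdot-x)\rangle=(-1)^{|k|}\int_{2^{-n}}^{2^{-n+1}}\lambda^{-|k|_\s}\langle\Recon\bf,(\partial^k\rho)^\lambda_x\rangle\,\frac{\mathrm{d}\lambda}{\lambda}$, whose left side converges to $\partial^k\Recon\bf$ in $\D'$; integrating by parts in the polynomial part ($\int z^m\partial^k\rho=(-1)^{|k|}\frac{m!}{(m-k)!}\int z^{m-k}\rho$ for $m\geq k$ componentwise, and $0$ otherwise), the leading term $m=k$ produces $k!\bf_k(x)$ while all other terms vanish in $L^p$ because $|k|_\s<\gamma$ and $\bf_k\in L^p$; the remainder part is $O(2^{-n(\gamma-|k|_\s)})$ in $L^p$ by the same dominated-convergence estimate as above, so $\partial^k\Recon\bf=k!\bf_k$. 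Finally, to see $\bf_0\in F^\gamma_{p,q}(\R^d)$ I would check \eqref{TL2} with $\alpha=\gamma\geq 0$: the first condition follows from $\sup_{\eta\in\B^r}|\langle\bf_0,\eta_x\rangle|\leq(|\bf_0|*\chi_{Q(0,1)})(x)$ and Young's inequality; for the second, any $\eta\in\B^r_{\lfloor\gamma\rfloor}$ annihilates all $z^k$ with $|k|_\s<\gamma$ (here $|k|_\s\in\N_0$ and $\gamma\notin\N$ are used), hence $\langle\Pi_x\bf(x),\eta^\lambda_x\rangle=0$ and $\langle\bf_0,\eta^\lambda_x\rangle=\langle\Recon\bf-\Pi_x\bf(x),\eta^\lambda_x\rangle$, so the required finiteness --- and the bound $\|\bf_0\|_{F^\gamma_{p,q}}\lesssim\vvvert\bf\vvvert_{\F^\gamma_{p,q}}$ --- is exactly \eqref{reconstruction_bound} restricted to $\B^r_{\lfloor\gamma\rfloor}\subset\B^r$.

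The main obstacle is precisely the limit interchange in the second paragraph, i.e.\ turning the averaged, scale-integrated bound \eqref{reconstruction_bound} into a statement about $\Recon\bf$ as a genuine distribution. The reason for averaging the mollifier over a whole dyadic block of scales, rather than testing against a single $\rho^\delta$, is that $\sum_n G_n^q<\infty$ forces $G_n\to 0$ along the discrete index $n$ (which is false for $h(\delta,\cdot)$ as $\delta\to0$ when $q<\infty$), and this is what makes the dominated-convergence step run; this is the same mechanism already used for uniqueness in the proof of Theorem \ref{reconstruction_theorem}. Everything else reduces to bookkeeping with the explicit action of the canonical polynomial model.
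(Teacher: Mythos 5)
Your proof is correct, and it takes a noticeably more constructive route than the paper's. The paper proves the identification $\partial^k \Recon\bf = k!\bf_k$ abstractly: it first establishes (by the dyadic-averaging device already used for uniqueness in Step~3 of Theorem~\ref{reconstruction_theorem}) that there is at most one distribution $\xi^{(k)}$ with
\[
\bigg\|\Big\|\sup_{\eta\in\B^{r+|k|}}\frac{|\langle\xi^{(k)}-\partial^k\Pi_x\bf(x),\eta^\lambda_x\rangle|}{\lambda^{\gamma-|k|}}\Big\|_{L^q_\lambda}\bigg\|_{L^p}<\infty,
\]
then verifies that both $\xi^{(k)}=k!\bf_k$ (using $\Q_k(\bf(y)-\Gamma_{y,x}\bf(x))$ and the defining bounds of $\F^\gamma_{p,q}$) and $\xi^{(k)}=\partial^k\Recon\bf$ (by integration by parts on the reconstruction bound) satisfy it, and concludes by uniqueness. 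You instead unfold that uniqueness mechanism into an explicit mollification: testing $\partial^k\Recon\bf$ against $\Phi_n$ averaged over a dyadic block of scales, splitting into the polynomial piece (computed exactly via the canonical model, integration by parts producing $k!\bf_k$) and the remainder (controlled by \eqref{reconstruction_bound} with the $\sum_n G_n^q<\infty$ trick and dominated convergence). The underlying analytic engine --- using the $L^q_\lambda$-integrability to force the dyadic-scale pieces to vanish --- is the same, and you rightly point out it is the same mechanism as Step~3 of the theorem; but the paper's presentation is modular (reuse the uniqueness lemma, verify two candidates) while yours is self-contained and makes the convergence to $k!\bf_k$ visible in $L^p$, at the cost of a bit more bookkeeping. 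The final part, showing $\bf_0\in F^\gamma_{p,q}$, is handled the same way in both: $\bf_0\in L^p$ gives the first condition in \eqref{TL2}, and the fact that $\eta\in\B^r_{\lfloor\gamma\rfloor}$ annihilates $\Pi_x\bf(x)$ reduces the second to the reconstruction bound.
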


\begin{proof}
Following the argument for the uniqueness of the reconstruction operator, we can see that 
there exists at most one distribution  $ \xi^{(k)} $  that satisfies the bound
\begin{align*}
    \bigg\|\Big\|\sup_{\eta \in \B^{r + |k|}}\frac{ | \langle \xi^{(k)} - \partial^k \Pi_x \bf(x) , \eta^{\lambda}_x \rangle |}{\lambda^{ \gamma - |k|}}\Big\|_{L^q_{\lambda}}\bigg\|_{L^p} < \infty.
\end{align*}
From the identity $(k!\bf_k - \Pi_x \bf(x))(y)  = \Q_k(\bf(y) - \Gamma_{y,x} \bf(x))$ 
and the definition of modelled distributions it is an easy computation to see 
that the above estimate is satisfied with $\xi^{(k)} = k! \bf_k.$ 
Moreover, also $\xi^{(k)} = \partial^k \Recon \bf$ satisfies such a bound. 
It therefore follows that $ \partial^k \Recon \bf= k!\bf_k$. 

It remains to prove that $\bf_0$ lies in $F^{\gamma}_{p,q}(\R^d)$. 
Note that the reconstruction theorem only gives us a regularity strictly 
smaller than $\gamma,$ at least for $ q < \infty.$ Since $\bf_0 \in L^p$ it 
is an easy computation to show that $$ \Big\|\sup_{\eta \in \B^r }
|\langle \bf_0, \eta_x^{\lambda} \rangle|\Big\|_{L^p} \lesssim \|\bf_0\|_{L^p}.$$
Furthermore, the second bound in the definition of the Triebel--Lizorkin space $F^\gamma_{p,q}$
is an immediate consequence of the reconstruction bound, since $\Pi_x \bf(x)$ is a
polynomial and the test functions from the definition annihilate polynomials
up to scaled degree $r$. 
\end{proof}

\begin{lemma}
    There exists a continuous injection $\iota\colon F^{\gamma}_{p,q} \to  \F^{\gamma}_{p,q}(\bar{T})$ 
    such that $ \Recon\iota\xi = \xi$ for any $\xi \in F^{\gamma}_{p,q}.$
\end{lemma}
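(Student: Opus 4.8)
The plan is to let $\iota\xi$ be the \emph{Taylor jet} of $\xi$. Since $\gamma\notin\N$, for every $k\in\N_0^d$ with $|k|_\s<\gamma$ the distributional derivative $\partial^k\xi$ lies in $F^{\gamma-|k|_\s}_{p,q}(\R^d)$, a Triebel--Lizorkin space of strictly positive smoothness, and is therefore represented by an honest function in $L^p(\R^d)$ with $\|\partial^k\xi\|_{L^p}\lesssim\|\xi\|_{F^\gamma_{p,q}}$ (standard mapping/embedding properties of Triebel--Lizorkin spaces, cf.\ \cite{Triebel:2006}, or extractable from the wavelet characterisation of Proposition~\ref{prop:waveletBounds}). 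Fixing the Lebesgue representative of each such $\partial^k\xi$, I would set
\begin{align*}
    (\iota\xi)(x) = \sum_{|k|_\s<\gamma}\frac{1}{k!}\,(\partial^k\xi)(x)\,\Xk ,
\end{align*}
which is linear in $\xi$ and takes values in $\Tbar_\gamma^-$. The local bound~i) for $\iota\xi$ is then immediate, since $\big\||\iota\xi(x)|_\zeta\big\|_{L^p}\lesssim\sum_{|k|_\s=\zeta}\|\partial^k\xi\|_{L^p}\lesssim\|\xi\|_{F^\gamma_{p,q}}$.

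For the translation bound ii), I would first unwind the action of the structure group on monomials (a polynomial $Q$ is sent to $Q(X{+}h\1)$, so $\Gamma_{x+h,x}$ acts by this shift). Expanding $(X{+}h\1)^k$ by the binomial theorem and using $|j{+}k|_\s=|j|_\s+|k|_\s$ together with $\partial^{j+k}\xi=\partial^j(\partial^k\xi)$, one finds for $|k|_\s<\gamma$ that
\begin{align*}
    k!\,\Q_k\big(\Delta_{\Gamma,h}(\iota\xi)(x)\big)
    = (\partial^k\xi)(x{+}h) - \sum_{|j|_\s<\gamma-|k|_\s}\frac{1}{j!}\big(\partial^j(\partial^k\xi)\big)(x)\,h^j ,
\end{align*}
i.e.\ the right-hand side is exactly the Taylor remainder, at the point $x$ and of scaled degree $<\gamma-|k|_\s$, of the function $g:=\partial^k\xi$. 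Hence condition~ii) reduces, componentwise in $\zeta=|k|_\s$, to the estimate
\begin{align*}
    \bigg\|\Big\|\mint{-}_{Q(0,4\lambda)}\frac{\big|g(x{+}h)-\sum_{|j|_\s<s}\frac{1}{j!}(\partial^j g)(x)\,h^j\big|}{\lambda^{s}}\,\mathrm{d}h\Big\|_{L^q_\lambda}\bigg\|_{L^p}\lesssim\|g\|_{F^s_{p,q}} ,
\end{align*}
applied with $g=\partial^k\xi$, $s=\gamma-|k|_\s\in\R_+\setminus\N$, followed by $\|\partial^k\xi\|_{F^{\gamma-|k|_\s}_{p,q}}\lesssim\|\xi\|_{F^\gamma_{p,q}}$. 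This displayed estimate is (one direction of) the classical characterisation of Triebel--Lizorkin spaces by volume means of differences, cf.\ \cite[Sec.\ 2.5.11]{Triebel:1983}; for a general scaling $\s$ I expect it to follow from the very same Littlewood--Paley and Fefferman--Stein arguments already used in the proofs of Proposition~\ref{prop:waveletBounds} and Proposition~\ref{proposition:characterisation_convergence} -- decompose $g$ into dyadic wavelet blocks, estimate the Taylor remainder of each block through Bernstein's inequality and a maximal function, and sum against the right weights. This is where the real work sits and is the main obstacle to writing the argument out carefully. Collecting all the resulting bounds, which are linear in $\xi$, gives $\iota\xi\in\F^\gamma_{p,q}(\Tbar)$ with $\vvvert\iota\xi\vvvert_{\F^\gamma_{p,q}}\lesssim\|\xi\|_{F^\gamma_{p,q}}$, so $\iota$ is continuous and linear.

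Finally, to show $\Recon\iota\xi=\xi$ I would not prove any new estimate but rather reuse the uniqueness half of the reconstruction theorem. For the polynomial structure with its canonical model, $\Pi_x(\iota\xi)(x)$ is the Taylor polynomial $P_x(y)=\sum_{|k|_\s<\gamma}\frac{1}{k!}(\partial^k\xi)(x)(y{-}x)^k$, whose constant part is precisely the $\zeta=0$ component above, so that $|\Delta_{\Gamma,h}(\iota\xi)(x)|_0$ equals $|\xi(x{+}h)-P_x(x{+}h)|$ up to a fixed constant. Since $\xi\in L^1_{\mathrm{loc}}(\R^d)$ and every $\eta\in\B^r$ satisfies $\|\eta^\lambda_x\|_{L^\infty}\lesssim\lambda^{-|\s|}$ with support in $Q(x,\lambda)$, for a.e.\ $x$ and every such $\eta$ one has
\begin{align*}
    \big|\langle\xi-\Pi_x(\iota\xi)(x),\eta^\lambda_x\rangle\big|
    \lesssim \mint{-}_{Q(0,\lambda)}|\xi(x{+}h)-P_x(x{+}h)|\,\mathrm{d}h
    \lesssim \mint{-}_{Q(0,4\lambda)}|\Delta_{\Gamma,h}(\iota\xi)(x)|_0\,\mathrm{d}h .
\end{align*}
Dividing by $\lambda^\gamma$ and taking the $L^p(L^q_\lambda)$-norm, the right-hand side is nothing but the $\zeta=0$ contribution to $\vvvert\iota\xi\vvvert_{\F^\gamma_{p,q}}$, hence finite; thus $\xi$ satisfies the reconstruction bound~\eqref{reconstruction_bound} relative to the modelled distribution $\iota\xi$. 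By the uniqueness assertion in Theorem~\ref{reconstruction_theorem} (Step~3 of its proof, where it is shown that at most one distribution can satisfy that bound) we conclude $\Recon\iota\xi=\xi$, and in particular $\iota$ is injective. Combined with the preceding lemma -- which shows $\iota\Recon\bf=\bf$ for $\bf\in\F^\gamma_{p,q}(\Tbar)$ and that $\Recon$ maps boundedly into $F^\gamma_{p,q}$ -- this also yields the isomorphism claimed in the proposition above.
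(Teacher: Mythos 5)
Your approach is genuinely different from the paper's, but it has a real gap that you yourself flag: the estimate
\begin{align*}
\bigg\|\Big\|\mint{-}_{Q(0,4\lambda)}\frac{\big|g(x{+}h)-\sum_{|j|_\s<s}\frac{1}{j!}(\partial^j g)(x)\,h^j\big|}{\lambda^{s}}\,\mathrm{d}h\Big\|_{L^q_\lambda}\bigg\|_{L^p}\lesssim\|g\|_{F^s_{p,q}}
\end{align*}
is the whole content of the translation bound for $\iota\xi$, and you do not prove it. Citing \cite[Sec.\ 2.5.11]{Triebel:1983} is not quite on point: what is stated there is the characterisation by iterated finite differences $\Delta^M_h g$, not by Taylor remainders, and in any case only for the isotropic scaling. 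For a general $\s$ the Taylor-remainder characterisation is not readily available and would have to be rederived from scratch with exactly the Littlewood--Paley and Fefferman--Stein machinery the paper is built around. Since your verification of $\Recon\iota\xi=\xi$ also rests on the finiteness of the $\zeta=0$ component of $\vvvert\iota\xi\vvvert_{\F^\gamma_{p,q}}$, the gap propagates into that step as well. The algebraic identity you derive for $\Delta_{\Gamma,h}(\iota\xi)$ is correct, and invoking the uniqueness half of the reconstruction theorem in place of the paper's direct $L^p$-convergence argument is a legitimate shortcut --- but neither of those is the bottleneck.

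The paper's route is designed precisely to avoid needing that estimate. It defines a \emph{discrete} approximation $\fbar^{(n)}(y)$ via the mollified coefficients $\Q_k\fbar^{(n)}(y)=\langle\partial^k\xi,P^{\lfloor\gamma\rfloor}_{k,y}(\rho^n,\cdot)\rangle$, and then constructs auxiliary test functions $\Phi^{k,n}_{y,h}$, $\Psi^{k,n}_y$, compactly supported in $Q(y,C2^{-n})$, which encode exactly the translation and consistency increments and --- crucially --- are shown to \emph{annihilate polynomials} of scaled degree up to $\lfloor\gamma\rfloor-|k|$. This reduces the discrete translation and consistency bounds to testing $\partial^k\xi$ against rescaled elements of $\B^r_{\lfloor\gamma\rfloor-|k|}$, which is directly the \emph{definition} of the $F^{\gamma-|k|_\s}_{p,q}$-norm; no Taylor-remainder characterisation is ever invoked. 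The element of $\F^\gamma_{p,q}(\Tbar)$ is then recovered from $\fbar\in\Fbar^\gamma_{p,q}(\Tbar)$ via Proposition~\ref{prop:back}. To repair your argument along your own lines you would either have to prove the displayed estimate for general $\s$, or retreat to the paper's discrete construction, which is the shortest path.
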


\begin{proof}
The proof follows step by step the one in the original article. We start with proving 
that there is a continuous injection into the discrete space of modelled distributions 
$\bar{\F}^{\gamma}_{p,q}(\bar{T}).$ We fix some integer $q$ and $k \in \N^d$ with $|k|_{\s} \le q$ 
and a function $\eta$. To lighten the notation in the following, 
we will replace $|\cdot |_{\s}$ with $| \cdot |.$ Then, we define
\begin{equation*}
\begin{aligned}
P^{q}_{k,y} (\eta , u) = &
    \displaystyle \sum\limits_{| \ell + k | \le q} (-1)^{\ell} \partial_u^{\ell} 
    \left[ \eta (u {-} y) \frac{(y{-}u)^{\ell}}{\ell ! k !} \right].
\end{aligned}
\end{equation*}
For $\xi \in {F}^{\gamma}_{p,q}$  and $y \in \Lambda_n$, we also 
define $\fbar^{(n)}(y) \in \bar{\T}_{\gamma}^{-}$ by 
$$
\mathcal{Q}_k \fbar^{(n)}(y) = \langle \partial^k \xi (\cdot), P^{\lfloor \gamma \rfloor}_{k,y}  (\rho^n , \cdot)\rangle,
$$
where $\rho$ is any smooth symmetric function with compact support in the unit ball, 
and which integrates to $1$. In addition, $\rho^n$ is a shorthand for $\rho^{2^{-n}}$,
i.e.\ we use $L_1$ scaling for $\rho.$ This is in analogy of the definition 
given by Proposition \ref{prop:forth}, cf.\ also \cite{Hairer:2017}. Now, 
it is our task to show that $\fbar$ lies in $\bar{\F}^{\gamma}_{p,q}(\bar{T}).$ 

The idea is of course to transform the bounds in the definition of $F^{\gamma}_{p,q}$ 
into the bounds from the definition of $\bar{\F}^{\gamma}_{p,q}(\bar{T})$. 
For this purpose, we will also need to define the functions
\begin{align*}
    \Phi^{k,n}_{y,h} (\cdot) & =  P^{\lfloor \gamma \rfloor}_{k,y+h} (\rho^n , \cdot) - 
    \sum_{|\ell + k| \le \lfloor \gamma \rfloor} (-h)^{\ell}  \frac{(\ell + k)!}{ \ell! k !} \partial^{\ell}_u 
    P^{\lfloor \gamma \rfloor}_{k +\ell, y} (\rho^n , \cdot), \\
    \Psi^{k,n}_{y} (\cdot) & = P^{\lfloor \gamma \rfloor}_{k,y}(\rho^n , \cdot) - 
    P^{\lfloor \gamma \rfloor}_{k,y}(\rho^{n+1} , \cdot),
\end{align*}
where $h$ is an element of $\mathcal{E}_n$. These functions have been chosen appropriately, so that the following two equalities hold: 
\begin{align*}
    \langle \partial^k \xi, \Phi^{k,n}_{y,h}\rangle = & \mathcal{Q}_k (\fbar^{(n)}(y)  -  \Gamma_{y+h, y} \fbar^{(n)}(y) ),\\
    \langle \partial^k \xi, \Psi^{k,n}_{y}\rangle = & \mathcal{Q}_k (\fbar^{(n)}(y)  -   \fbar^{(n+1)}(y)).
\end{align*}
One easily recognizes that $P^{\lfloor \gamma \rfloor}_{k,y}(\rho^n , \cdot), 
\ \Phi^{k,n}_{y,h}$ and $ \Psi^{k,n}_{y},$ are smooth functions with compact 
support in the ball of radius $2^{-n}$ about $y$.
Thus, taking $n=0$ the first bound from Definition \ref{def:fbar} follows immediately:
\begin{align*}
	\sup_{\zeta} \bigg(  \sum_{y \in \Lambda_0} |\fbar^{(0)}(y)|^p_{\zeta}\bigg)^{1/p} 
	\lesssim \vvvert\xi\vvvert_{F^\gamma_{p,q}}.
\end{align*}
Now, we turn to the translation and consistency bounds. 
Assuming for the moment that we can prove that these functions 
annihilate polynomials of degree smaller than $ \gamma  - |k|,$ 
it follows that $\fbar \in \bar{\mathcal{F}}^{\gamma}_{p,q}.$ Indeed, 
since $\partial^k \xi \in F^{\gamma - |k|}_{p,q}$ we obtain bounds of the form
\begin{align*}
    \bigg\|\bigg(\sum_{n\geq 0}\Big| &\sum_{y\in\Lambda_n}\sum_{h\in\E_{n}}\frac{|\fbar^{(n)}(y{+}h)-\Gamma_{y{+}h,y}\fbar^{(n)}(y)|_\zeta}{2^{-n(\gamma-\zeta)}} \chi^n_y(x)\Big|^q\bigg)^\frac{1}{q}\bigg\|_{L^p} \\
    & \lesssim \bigg\|\bigg(\sum_{n\geq 0}\Big|\sum_{y\in\Lambda_n}\sum_{h\in\E_{n}}\sum_{|k| = \zeta} \frac{ | \langle \partial^k \xi, \Phi^{k,n}_{y,h}\rangle| }{2^{-n(\gamma-|k|)}} \chi^n_y(x)\Big|^q\bigg)^\frac{1}{q}\bigg\|_{L^p} \\ 
    & \lesssim \bigg\|\bigg(\sum_{n\geq0}\Big| \sup_{\eta \in \B^r_{\lfloor \gamma \rfloor - |k|}} \frac{ | \langle \partial^k \xi, \eta^n_x \rangle| }{2^{-n(\gamma-|k|)}}\Big|^q\bigg)^\frac{1}{q}\bigg\|_{L^p}  
    \lesssim \vvvert\xi\vvvert_{F^\gamma_{p,q}},
\end{align*}
and similarly for the consistency bound. Thus, all that is left to prove is 
that $\Phi$ and $\Psi$ annihilate polynomials of degree smaller than $\gamma - |k|.$ 
For, it is an easy calculation to see that
\begin{equation}\label{eqn:meanP}
\int P^{\lfloor \gamma \rfloor}_{k,y} (\rho^n , u)\,\mathrm{d}u = \frac{1}{k!},
\end{equation}
and from this it follows that $\Phi$ and $\Psi$ vanish when integrated 
against any constant, because up to some derivative terms they are made 
out of differences of the function $P.$ By the same arguments presented 
in \cite{Hairer:2017} it is possible to prove that
$$
    \langle P^{\lfloor \gamma \rfloor}_{k,y}(\rho^n , \cdot) , ( \cdot - y )^m \rangle = 0,
$$
for degrees $m$ such that $m \neq 0$ and $|m {+} k| \le \lfloor \gamma \rfloor .$ 
Together with the fact that all $P_k$'s have the same volume mean, this is sufficient 
in order to see that $\Psi$ vanishes when integrated against the requested polynomials. 
For completeness we treat the case for $\Phi,$ since it is left as an exercise in \cite{Hairer:2017}. 

Fix a degree $m$ such that $|m {+} k| \le \lfloor \gamma \rfloor.$ 
We want to prove that $$\langle \Phi^{k,n}_{y,h} , (\cdot - (y{+}h))^m \rangle = 0.$$ Indeed we have
\begin{align*}
	\langle \Phi^{k,n}_{y,h} , &  (\cdot - (y+h))^m \rangle = \\
	& =  - \sum_{|\ell + k| \le \lfloor \gamma \rfloor} (-h)^{\ell}  
	\frac{(\ell + k)!}{ \ell! k !} \int \partial^{\ell}_u P^{\lfloor \gamma \rfloor}_{ k +\ell, y}
	(\rho^n , u)( u - (y+h) )^m\,\mathrm{d}u\\
	& = - \sum_{\ell \le m} h^{\ell}  \frac{(\ell + k)!}{ \ell! k !} \int \frac{m!}{(m-\ell)!}  
	P^{\lfloor \gamma \rfloor}_{k +\ell, y} (\rho^n , u)(-h)^{m-\ell}\,\mathrm{d}u \\
	& = -\frac{h^m}{k!} \sum_{\ell \le m} (-1)^{m-\ell} \frac{m!}{l! (m-l)!}  = 0,
\end{align*}
where we have repeatedly used that fact that $P^{\lfloor \gamma \rfloor}_{k +\ell,z}$ 
vanishes against polynomials centred in $z$, and in the second line we applied integration by parts. 
In the last line we first used (\ref{eqn:meanP}) and then the binomial formula.
At this point we can conclude that $\fbar \in \bar{\F}^{\gamma}_{p,q}.$ 

What we finally need to prove is that $\Recon \iota \xi = \xi,$ where $\iota \xi$ is of course given by the modelled
distribution $\bf$ corresponding to $\fbar$ through the isomorphism of Proposition \ref{prop:forth}. 
We observe for this purpose that $\Recon i \xi = \mathcal{Q}_0 \bf $, and that it is 
part of the statement of Proposition \ref{prop:forth} that
$\Gamma_{x,x_n}\fbar^{(n)}(x_n)\to\mathcal{Q}_0 \bf (x)$
in $L^p$ as $n\to\infty$. It becomes clear that $\xi = \lim_n  \Gamma_{x,x_n}\fbar^{(n)}(x_n)$ just by embedding
$F^{\gamma}_{p,q} \subset L^p.$ Indeed, it is a well known result for convolutions on $L^p$ that 
$\xi*\rho^n \to \xi$ in $L^p$. Then, one can follow exactly the arguments of Hairer and Labb\'{e} 
\cite{Hairer:2017}, which finally concludes the proof.
\end{proof}

\section{Convolution against singular kernels} \label{sec:schauder}
In this section, we want to prove Schauder type estimates on the level of modelled distributions
of class $\F^\gamma_{p,q}$, i.e.\ given a kernel $K$ which improves regularity by some $\beta>0$
we want to construct a linear map $\F^\gamma_{p,q}\ni\bf\mapsto\K_\gamma\bf\in\F^{\gamma+\beta}_{p,q}$.
In addition, we want to ensure that the construction behaves suitably under the action 
of the reconstruction operator, i.e.\
\begin{align}\label{ReconConvolution}
    \Recon\K_\gamma\bf = K\ast\Recon\bf.
\end{align}
Of course, almost all non-trivial parts of this program were already carried out by Hairer in \cite[Sec. 5]{Hairer:2014}.
What is essentially left to us is to prove the announced Schauder estimate in the framework of the scale $\F^\gamma_{p,q}$
as well as to check the validity of \eqref{ReconConvolution} in this setting. But before we state and prove 
the precise results, let us recall the ingredients of the construction
of the linear map $\bf\mapsto\K_\gamma\bf$.

To this end, we still assume that the regularity structure at hand
contains the polynomial structure $(\Abar,\Tbar,\Gbar)$. Furthermore, let 
$K\colon Q(0,1)\to\R$ be a kernel which is smooth at every point in $Q(0,1)\setminus\{0\}$.
We also assume that the kernel is
$\beta$-regularizing in the sense of \cite[Assumptions 5.1, 5.4]{Hairer:2014} for some $\beta>0$.
More precisely, we assume that for every $n\geq 0$ there exists a smooth kernel $K_n\colon\R^d\to\R$ such that
the following properties are satisfied:
\begin{itemize}\itemsep3pt
    \item[i)]   The decomposition $K=\sum_{n\geq 0} K_n$ holds true on $Q(0,1)$.
    \item[ii)]  The kernel $K_0$ has support in $Q(0,1)$. Furthermore, we have
                for every $n\geq 0$ and every $x\in\R^d$ the scaling relation
                \begin{align}
                     K_n(x) = 2^{-n(\beta-|\s|)}K_0(2^{n\s}x).
                \end{align}
    \item[iii)] The kernel $K_0$ kills polynomials with scaled degree at most $r$.           
\end{itemize}
Here and for what follows in this section, we let $r>\max|\A_{\gamma+\beta}|$.

Next, we want to recall the assumption that the polynomial structure provides the only
integer homogeneities for the regularity structure under consideration. 
It was pointed out by Hairer (cf. \cite[Sec. 5]{Hairer:2014}) that in general it is necessary that $\K_\gamma\bf$ 
takes non-zero values in parts of the regularity structure which do not incorporate the polynomial structure. 
In order to encode the action of the kernel $K$ in these parts with non-integer homogeneities, we assume that
our regularity structure comes with an abstract integration map $\mathcal{I}\colon\T\to\T$ of order $\beta$, 
i.e.\ (cf.\ \cite[Def 5.7]{Hairer:2014})
\begin{itemize}\itemsep3pt
    \item[i)] $\mathcal{I}|_{\T_\zeta}\colon \T_\zeta \to \T_{\zeta+\beta}$,
    \item[ii)] $\mathcal{I}$ annihilates all elements in the structure $\Tbar$,
    \item[iii)] $\mathcal{I}\Gamma\bTau - \Gamma\mathcal{I}\bTau \in\Tbar$ for all $\Gamma\in\G$ and $\bTau\in\T$.
\end{itemize}
We refer again to \cite[Rem 5.8]{Hairer:2014} for a discussion of this notion.
Now, let $1\leq p<\infty$, $1\leq q\leq\infty$ and $\gamma>0$. 
Given a modelled distribution $\bf\in\F^\gamma_{p,q}$,
we then define as in \cite[Equ.\ (5.15)]{Hairer:2014} the operator
\begin{equation}
\begin{aligned}
\K_{\gamma}\bf(x) &= \mathcal{I}\big(\bf(x)\big) +\sum_{\zeta\in\A_\gamma}
\sum_{|k|_\s<\zeta{+}\beta}\sum_{n\geq 0}
\frac{\Xk}{k!}\langle\Pi_{x}\Q_\zeta\bf(x),D^k_1K_n(x,\cdot)\rangle \\
&\quad+\sum_{|k|_\s<\gamma+\beta}\sum_{n\geq 0}\frac{\Xk}{k!}
\langle\Recon\bf-\Pi_{x}\bf(x),D^k_1K_n(x,\cdot)\rangle.
\end{aligned}
\end{equation}
Here, we identified $K_n(x,z)= K_n(x{-}z)$. For convenience, let us also introduce for each $n\geq 0$
the operator
\begin{align*}
\K_{n,\gamma}\bf(x) &= 
\sum_{\zeta\in\A_\gamma}\sum_{|k|_\s<\zeta{+}\beta}\frac{\Xk}{k!}\langle\Pi_{x}\Q_\zeta\bf(x),D^k_1K_n(x,\cdot)\rangle \\
&\quad\quad\qquad+\sum_{|k|_\s<\gamma+\beta}\frac{\Xk}{k!}\langle\Recon\bf-\Pi_{x}\bf(x),D^k_1K_n(x,\cdot)\rangle.
\end{align*}
Note that due to the bounds provided by a model,
the reconstruction theorem as well as the scaling properties of the kernel $K_0$, 
the occurring sums over $n\geq 0$ converge absolutely in $L^p(\R^d)$. Of course,
we are not only interested in this local bound but actually in the improved version
concerning the whole norm for the scale of spaces $\F^\gamma_{p,q}$. 

Apart from that, we also want to ensure that \eqref{ReconConvolution} holds true.
Recall that this is of paramount importance when one wants to recast abstract solutions of fixed-point maps on the
level of modelled distributions as mild solutions to regularized versions of an SPDE under consideration.
Verifying the Schauder type estimates as well as \eqref{ReconConvolution} is in turn inseparably linked to the task of 
finding a class of models which act appropriately on the abstract integration map $\mathcal{I}$.

To this end, the notion of an admissible model was introduced by Hairer, cf.\ \cite[Def 5.9]{Hairer:2014}. For a model
to be admissible it is required that the following relation (appropriately interpreted) holds true:
\begin{align*}
    \Pi_x\mathcal{I}\bTau(y) = \langle\Pi_x\bTau,K(y,\cdot)\rangle
    - \sum_{|k|_\s<\zeta{+}\beta}\frac{\Xk}{k!}\langle\Pi_{x}\bTau,D^k_1K(x,\cdot)\rangle,
\end{align*}
where $\zeta\in\A_\gamma$ and $\bTau\in\T_\zeta$. It is indeed a non-trivial result that this definition 
really satisfies the required analytic bounds of a model. For this, and a discussion of the notion of admissible 
models, we again refer to \cite{Hairer:2014}. 

\begin{theorem}\label{theo:Schauder}
Let $(\A,\T,\G)$ be a regularity structure and $K\colon Q(0,1)\to\R$ be a kernel
such that all of the assumptions listed above are satisfied.
Let also $1\leq p < \infty$ and $1\leq q\leq \infty$.
In addition, we assume that $\gamma\in\R_+\setminus\N$ as well as $\zeta+\beta\notin\N$ 
for all $\zeta\in(\A_\gamma\cup\{\gamma\})\setminus\N$. Given an admissible model $(\Pi,\Gamma)$, 
the linear operator $\K_\gamma$ then maps $\F^\gamma_{p,q}$ continuously
into $\F^{\gamma+\beta}_{p,q}$, and the following bound
\begin{align}\label{eq:Convol}
\vvvert\K_{\gamma}\bf\vvvert_{\F^{\gamma+\beta}_{p,q}} 
\lesssim \|\Pi\|(1+\|\Gamma\|)\vvvert\bf\vvvert_{\F^{\gamma}_{p,q}}
\end{align}
holds true uniformly over all $\bf\in\F^\gamma_{p,q}$ and models $(\Pi,\Gamma)$. We also have
\begin{align*}
    \Recon\K_\gamma\bf = K\ast\Recon\bf.
\end{align*}
Furthermore, consider the situation incorporating a second model $(\bar{\Pi},\bar{\Gamma})$,
and denote by $\bar{\K}_\gamma$ the associated abstract convolution operator.
Then, the quantity $\vvvert\K_\gamma\bf;\bar{\K}_\gamma\fbar\vvvert_{\F^{\gamma+\beta}_{p,q}}$ is bounded by
\begin{align*}
\|\Pi\|(1+\|\Gamma\|)\vvvert\bf;\fbar\vvvert_{\F^\gamma_{p,q}}
+\|\Pi-\bar{\Pi}\|(1+\|\Gamma\|)\vvvert\fbar\vvvert_{\F^\gamma_{p,q}}
+\|\bar{\Pi}\|\|\Gamma-\bar{\Gamma}\|\vvvert\fbar\vvvert_{\F^\gamma_{p,q}},
\end{align*}
uniformly over all $\bf\in\F^\gamma_{p,q}(\Pi,\Gamma)$, 
all $\fbar\in\F^\gamma_{p,q}(\bar{\Pi},\bar{\Gamma})$ as well as all models
$(\Pi,\Gamma)$ and $(\bar{\Pi},\bar{\Gamma})$.
\end{theorem}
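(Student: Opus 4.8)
The plan is to follow the structure of the corresponding proof in \cite{Hairer:2014} and \cite{Hairer:2017}, but carry all estimates through with the Triebel--Lizorkin sequence-space norm $\vvvert\cdot\vvvert_{\Fbar^\gamma_{p,q}}$ rather than with Besov or H\"older norms. Thanks to Propositions \ref{prop:forth} and \ref{prop:back} it suffices to work at the level of the discretised spaces $\Fbar$, so the first step is to decompose $\K_\gamma\bf = \sum_{n\geq 0}\K_{n,\gamma}\bf + \mathcal{I}(\bf)$ and to split the sum over the scales of the kernel, $K = \sum_{m\geq 0}K_m$, against the dyadic scales $2^{-n}$ at which the norm of $\Fbar^{\gamma+\beta}_{p,q}$ is measured. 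As in Hairer's argument, for the translation bound on $\K_\gamma\bf$ one writes $\Delta_{\Gamma,h}\K_\gamma\bf(x)$ and, using admissibility of the model together with the defining relation for $\mathcal{I}$, reduces the difference to (a) terms involving $\Delta_{\Gamma,h}\bf(x)$ paired against $D^k_1K_m(x,\cdot)$, and (b) terms involving $\langle\Recon\bf - \Pi_x\bf(x),D^k_1K_m(x,\cdot)\rangle$ — these are the genuinely new contributions, and here the reconstruction bound in the form of Remark \ref{reconstruction_bound3} is exactly what is needed, since it controls precisely the local-volume-mean version of the reconstruction error that matches the $\Fbar$-norm.

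After this reduction, the heart of the proof is a Schur-type summation over the two scale indices $m$ (kernel scale) and $n$ (modelled-distribution scale). For $m \leq n$ one uses the smoothness and the polynomial-annihilation property (iii) of $K_0$ to gain a factor $2^{-(n-m)(r - |k|_\s)}$ or similar, while for $m > n$ the $\beta$-regularisation scaling $K_m(x) = 2^{-m(\beta-|\s|)}K_0(2^{m\s}x)$ produces a convergent geometric series in $2^{-(m-n)(\zeta+\beta-|k|_\s)}$; the non-integer assumptions on $\gamma$ and on the $\zeta+\beta$ guarantee that no exponent vanishes and hence that these geometric series are summable. The crucial analytic tool — exactly as in the proof of Proposition \ref{prop:waveletBounds} and Proposition \ref{prop:back} — is that the spatial averages over cubes $Q(x,C2^{-n})$ are dominated by the Hardy--Littlewood maximal function applied to the relevant $\ell^\eta$-power (for a suitable $\eta \in (0,1)$ with $\eta < p\wedge q$), after which Young's inequality for convolutions in the scale index and the Fefferman--Stein vector-valued maximal inequality convert the double sum into the single sum defining $\vvvert\bf\vvvert_{\Fbar^\gamma_{p,q}}$. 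The local bound i) of the definition of $\F^{\gamma+\beta}_{p,q}$ follows by the same manipulations with the scale index $n$ frozen (formally $n=0$), using the already-noted absolute convergence of the sums in $L^p$.

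The identity $\Recon\K_\gamma\bf = K\ast\Recon\bf$ is proved by the uniqueness part of the reconstruction theorem: one checks that the right-hand side satisfies the reconstruction bound \eqref{reconstruction_bound} relative to the model evaluated at $\K_\gamma\bf$, which amounts to estimating $\langle K\ast\Recon\bf - \Pi_x\K_\gamma\bf(x),\eta^\lambda_x\rangle$; writing $K\ast\Recon\bf = \sum_m K_m \ast \Recon\bf$ and expanding $\Pi_x\K_\gamma\bf(x)$ via admissibility, the difference telescopes into reconstruction errors $\langle\Recon\bf-\Pi_x\bf(x),\cdot\rangle$ and model bounds, both controllable at the required order $\lambda^{\gamma+\beta}$ — this is essentially verbatim the computation in \cite[Sec.\ 5]{Hairer:2014}, only the outer norm has changed, and the estimates needed are the same ones already assembled for \eqref{eq:Convol}. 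Finally, the two-model Lipschitz bound on $\vvvert\K_\gamma\bf;\bar{\K}_\gamma\fbar\vvvert_{\F^{\gamma+\beta}_{p,q}}$ follows by running the same decomposition for the difference $\K_\gamma\bf - \bar{\K}_\gamma\fbar$, inserting and subtracting mixed terms so that each resulting piece involves either $\vvvert\bf;\fbar\vvvert$ (with a factor $\|\Pi\|(1+\|\Gamma\|)$), or $\vvvert\fbar\vvvert$ times one of $\|\Pi-\bar\Pi\|(1+\|\Gamma\|)$ or $\|\bar\Pi\|\|\Gamma-\bar\Gamma\|$, using the Lipschitz reconstruction bound \eqref{lipschitz_reconstruction_bound} in the averaged form analogous to Remark \ref{reconstruction_bound3} for the terms that carry a reconstruction error.

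The main obstacle I expect is bookkeeping rather than a genuinely new idea: one must arrange the double scale-sum so that \emph{before} applying Young's inequality the summand has already been written as a maximal function of an $\ell^\eta$-power of the natural $\Fbar^\gamma_{p,q}$-quantity, and one must choose $\eta$ simultaneously small enough for $\eta<p\wedge q$ (so Fefferman--Stein applies) and large enough that the loss $\frac{\eta-1}{\eta}|\s|$ in the exponents still leaves all geometric series in $m-n$ convergent — this is where Assumption \ref{gamma} together with the hypothesis $\zeta+\beta\notin\N$ for $\zeta\in(\A_\gamma\cup\{\gamma\})\setminus\N$ is used, ensuring the relevant exponents $\gamma+\beta-|k|_\s$, $\zeta+\beta-|k|_\s$ are bounded away from zero. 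A secondary subtlety is that the term $\mathcal{I}(\bf(x))$, which carries the whole modelled distribution up to homogeneity $\gamma+\beta$, must be handled using property iii) of $\mathcal{I}$ (so that $\mathcal{I}\Gamma\bTau - \Gamma\mathcal{I}\bTau$ lies in the polynomial part and is thus absorbed into the explicit Taylor-type correction terms), exactly as in Hairer's original argument.
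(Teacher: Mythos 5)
Your overall architecture matches the paper's: decompose $\K_\gamma\bf$ into the abstract part $\mathcal{I}(\bf)$ and the polynomial corrections $\sum_n\K_{n,\gamma}\bf$, split the kernel scales $m$ against the dyadic scales $2^{-n}$ of the norm, and close the estimates with the averaged reconstruction bound (Remark~\ref{reconstruction_bound3}), maximal-function domination, Young's inequality in the scale index, and Fefferman--Stein. Two points of genuine divergence and one imprecision are worth flagging. First, you propose to pass through the sequence spaces $\Fbar^{\gamma+\beta}_{p,q}$ via Propositions~\ref{prop:forth}--\ref{prop:back}, but the paper deliberately does \emph{not} do this: because $\K_\gamma\bf$ is itself defined through the model and the reconstruction operator, taking the local averages $\overline{\K_\gamma\bf}^{(n)}(y)=\mint{-}_{Q(y,2^{-n})}\Gamma_{y,z}\K_\gamma\bf(z)\,\mathrm{d}z$ would stack an extra averaging on top of the kernel sums and make the bookkeeping heavier. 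The paper instead estimates the continuous $\F^{\gamma+\beta}_{p,q}$-norm directly, discretizing only the scale parameter $\lambda$ into dyadic blocks, which is both sufficient and cleaner. Second, for the identity $\Recon\K_\gamma\bf=K\ast\Recon\bf$ you propose to re-run the uniqueness argument by checking the reconstruction bound for $K\ast\Recon\bf$; the paper instead notes that this identity is already established for the Besov scale in~\cite{Hairer:2017} and transfers it for free via the embedding $\F^{\gamma}_{p,q}\subset\mathcal{B}^{\gamma}_{p,q\vee p}$. Your route works but costs an extra computation that the embedding renders unnecessary. Finally, your account of where the geometric gain for $m<n$ comes from (``$2^{-(n-m)(r-|k|_\s)}$ from the polynomial-annihilation of $K_0$'') is not quite the paper's mechanism: the gain there is $2^{(n-m)(\gamma+\beta-|k+l|_\s)}$ for $k+l\in\partial(\gamma+\beta)$, produced by the scaled Taylor remainder formula applied to the test functions $K^{k,\gamma+\beta}_{m,x,x+h}$; the polynomial-annihilation property of $K_0$ is instead used to kill the borderline contributions with $\zeta+\beta=|k|_\s$ (combined with the non-integer hypotheses), not to produce the summable factor. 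None of this is a fatal gap — the tools you list are the right ones — but you should tighten the $m<n$ mechanism and be aware you can avoid both the $\Fbar$ detour and the uniqueness re-derivation.
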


\begin{proof}
In the course of the proof, we will focus only on the bounds in the case of one model.
The asserted bound concerning two (in general different) models then follows from the 
same arguments which are already employed in \cite{Hairer:2017}.

We begin with the respective local bound. For, we first fix a non-integer homogeneity
$\zeta<\gamma+\beta$. In this case, $\Q_\zeta\K_\gamma\bf(x)=\Q_\zeta\mathcal{I}\big(\bf(x)\big)$
from which the required bound immediately follows due to the properties of the abstract
convolution map $\mathcal{I}$. Hence, let us move on with with contributions in the
canonical structure, i.e.\ consider $k\in\N_0^d$ such that $|k|_\s<\gamma+\beta$.
We obviously have
\begin{align*}
    k!\Q_{\Xk}&\K_{n,\gamma}\bf(x) \\ &= \langle\Recon\bf - \Pi_x\bf(x),D^k_1K_n(x,\cdot)\rangle
    +\sum_{\zeta>|k|_\s-\beta}\langle\Pi_x\Q_\zeta\bf(x),D^k_1K_n(x,\cdot)\rangle.
\end{align*}
Let us derive bounds for each of these two terms. With respect to the first one, we obtain
with the help of the reconstruction bound
\begin{align*}
    \sum_{n\geq 0}\big\||\langle&\Recon\bf - \Pi_x\bf(x),D^k_1K_n(x,\cdot)\rangle|\big\|_{L^p} \\
    &\lesssim \sum_{n\geq 0}2^{-n(\gamma+\beta-|k|_\s)}\bigg\|\sup_{\eta\in\B^r(\R^d)}
    \frac{|\langle\Recon\bf-\Pi_x\bf(x),\eta^{2^{-n}}_x\rangle|}{2^{-n\gamma}}\bigg\|_{L^p}
    \lesssim \vvvert\bf\vvvert_{\F^\gamma_{p,q}}.
\end{align*}
For the second term, we simply estimate
\begin{align*}
    \sum_{\zeta>|k|_\s-\beta}\sum_{n\geq 0}\big\||\langle&\Pi_x\Q_\zeta\bf(x),D^k_1K_n(x,\cdot)\rangle|\big\|_{L^p} \\
    &\lesssim \sum_{\zeta>|k|_\s-\beta}\sum_{n\geq 0}2^{-n(\zeta+\beta-|k|_\s)}\big\||\bf(x)|_\zeta\big\|_{L^p}
    \lesssim \vvvert\bf\vvvert_{\F^\gamma_{p,q}}.
\end{align*}
This concludes the discussion for the local bound.

We turn to the respective translation bound. To this end, let us start again with a non-integer
homogeneity $\zeta<\gamma+\beta$. We then have (cf.\ \cite{Hairer:2014}) $$\Q_\zeta\big(\K_\gamma\bf(x{+}h)- 
\Gamma_{x+h,x}\K_\gamma\bf(x)\big)=\Q_\zeta\mathcal{I}\big(\bf(x{+}h)-\Gamma_{x+h,x}\bf(x)\big)$$
for all $x\in\R^d$, all $n\geq 0$ and all $h\in Q(0,4\cdot 2^{-n})$. Hence, the desired
bound follows from the translation bound for $\bf$ and the properties of $\mathcal{I}$.
Fix now some $k\in\N_0^d$ such that $|k|_\s<\gamma+\beta$, i.e.\ we proceed with contributions
in the polynomial structure. For, we also fix $x\in\R^d$, $n\geq 0$ as well as $h\in Q(0,4\cdot 2^{-n})$.
In the following, we separate the decomposition $K=\sum_{m\geq 0} K_m$ into a sum over
$m<n$ and a sum over $m\geq n$, and will argue differently for each of these two contributions.

Let us treat the case of $m\geq n$. We make use of the identity (cf.\ \cite{Hairer:2014})
\begin{align*}
    k!\Q_{\Xk}&\big(\K_{m,\gamma}\bf(x{+}h)-\Gamma_{x+h,x}\K_{m,\gamma}\bf(x)\big) \\ 
		&\hspace{0.5cm}=\langle\Recon\bf-\Pi_{x+h}\bf(x{+}h),D^k_1K_m(x{+}h,\cdot)\rangle \\
    &\hspace{+1.5cm}+\sum_{\substack{l\in\N_0^d \\ |k+l|_\s<\gamma+\beta}}
    \frac{h^l}{l!}\langle\Recon\bf-\Pi_x\bf(x),D^{k+l}_1K_m(x,\cdot)\rangle \\
    &\hspace{+1.5cm}+\sum_{\zeta>|k|_\s-\beta}\langle\Pi_{x+h}\Q_\zeta
    \big(\bf(x{+}h)-\Gamma_{x+h,x}\bf(x)\big),D^k_1K_m(x{+}h,\cdot)\rangle.
\end{align*}
With respect to the first term in this decomposition, we have the bound
\begin{align*}
    \sum_{m\geq n}&\mint{-}_{Q(0,4\cdot 2^{-n})}
    \frac{|\langle\Recon\bf-\Pi_{x+h}\bf(x{+}h),D^k_1K_m(x{+}h,\cdot)\rangle|}
    {2^{-n(\gamma-|k|_\s)}}\,\mathrm{d}h \\ &\lesssim \sum_{m\geq n}2^{-m(\beta-|k|_\s)}
    \mint{-}_{Q(0,4\cdot 2^{-n})}\sup_{\eta\in\B^r(\R^d)}
    \frac{|\langle\Recon\bf-\Pi_{x+h}\bf(x{+}h),\eta^{2^{-m}}_{x+h}\rangle|}
    {2^{-n(\gamma-|k|_\s)}}\,\mathrm{d}h \\ &\lesssim \sum_{m\geq n}2^{(n-m)(\gamma+\beta-|k|_\s)}
    \mint{-}_{Q(x,4\cdot 2^{-n})}\sup_{\eta\in\B^r(\R^d)}
    \frac{|\langle\Recon\bf-\Pi_{z}\bf(z),\eta^{2^{-m}}_{z}\rangle|}{2^{-m\gamma}}\,\mathrm{d}z.
\end{align*}
We can proceed from here by estimating
\begin{align*}
    \mint{-}_{Q(x,4\cdot 2^{-n})}&\sup_{\eta\in\B^r(\R^d)}
    \frac{|\langle\Recon\bf-\Pi_{z}\bf(z),\eta^{2^{-m}}_{z}\rangle|}{2^{-m\gamma}}\,\mathrm{d}z \\
    &\hspace{-1.6cm}\lesssim \sum_{\substack{y\in\Lambda_m \\ \|y{-}x\|_\s\leq 4\cdot 2^{-n}}}
    2^{(n-m)|\s|}\mint{-}_{Q(y,4\cdot 2^{-m})}\sup_{\eta\in\B^r(\R^d)}
    \frac{|\langle\Recon\bf-\Pi_{z}\bf(z),\eta^{2^{-m}}_{z}\rangle|}{2^{-m\gamma}}\,\mathrm{d}z \\
    &\hspace{-1.6cm}\lesssim 2^{(n-m)\kappa'}\bigg\{
    \mathcal{M}\bigg(\bigg|\sum_{y\in\Lambda_m}\mint{-}_{Q(y,4\cdot 2^{-m})}
    \sup_{\eta\in\B^r}\frac{|\langle\Recon\bf-\Pi_{z}\bf(z),\eta^{2^{-m}}_{z}\rangle|}
    {2^{-m\gamma}}\chi^m_y\,\mathrm{d}z\bigg|^\kappa\bigg)(x)\bigg\}^\frac{1}{\kappa},
\end{align*}
which holds uniformly over all $0<\kappa<1$. Here, we also 
introduced the shorthand $\kappa'=|\s|(\kappa-1)/\kappa$.
Now, choosing $\kappa$ sufficiently close to one, we obtain with 
the vector-valued maximal inequality
\begin{align*}
    \bigg\|\Big\| \sum_{m\geq n}&\mint{-}_{Q(0,4\cdot 2^{-n})}
    \frac{|\langle\Recon\bf-\Pi_{x+h}\bf(x{+}h),D^k_1K_m(x{+}h,\cdot)\rangle|}
    {2^{-n(\gamma-|k|_\s)}}\,\mathrm{d}h\Big\|_{\ell^q}\bigg\|_{L^p} \\ &\lesssim
    \bigg\|\Big\| \sum_{y\in\Lambda_n}\mint{-}_{Q(y,4\cdot 2^{-n})}
    \sup_{\eta\in\B^r}\frac{|\langle\Recon\bf-\Pi_{z}\bf(z),\eta^{2^{-n}}_{z}\rangle|}
    {2^{-n\gamma}}\chi^n_y(x)\,\mathrm{d}z\Big\|_{\ell^q}\bigg\|_{L^p} \\ &\lesssim
    \bigg\|\Big\|\mint{-}_{Q(0,5\cdot 2^{-n})}\sup_{\eta\in\B^r}
    \frac{|\langle\Recon\bf-\Pi_{x{+}h}\bf(x{+}h),\eta^{2^{-n}}_{x{+}h}\rangle|}
    {2^{-n\gamma}}\,\mathrm{d}h\Big\|_{\ell^q}\bigg\|_{L^p}.
\end{align*}
Thus, thanks to Remark \ref{reconstruction_bound3} we eventually arrive at a bound of required order.
Concerning the second term, we derive the bound
\begin{align*}
    \sum_{m\geq n}&\sum_{\substack{l\in\N_0^d \\ |k+l|_\s<\gamma+\beta}}\mint{-}_{Q(0,4\cdot 2^{-n})}
    \frac{|h^l|}{l!}\frac{|\langle\Recon\bf-\Pi_x\bf(x),D^{k+l}_1K_m(x,\cdot)\rangle|}{2^{-n(\gamma-|k|_\s)}}\,\mathrm{d}h
    \\ &\lesssim \sum_{m\geq n}\sum_{\substack{l\in\N_0^d \\ |k+l|_\s<\gamma+\beta}}
    \mint{-}_{Q(0,4\cdot 2^{-n})}\frac{\|h\|_\s^{|l|_\s}}{2^{m(\beta-|k+l|_\s)}}
    \sup_{\eta\in\B^r(\R^d)}\frac{|\langle\Recon\bf-\Pi_{x}\bf(x),\eta^{2^{-m}}_{x}\rangle|}
    {2^{-n(\gamma-|k|_\s)}}\,\mathrm{d}h \\ &\lesssim \sum_{\substack{l\in\N_0^d \\ |k+l|_\s<\gamma+\beta}}\sum_{m\geq n}
    2^{(n-m)(\gamma+\beta-|k+l|_\s)}\sup_{\eta\in\B^r(\R^d)}\frac{|\langle\Recon\bf-\Pi_{x}\bf(x),\eta^{2^{-m}}_{x}\rangle|}
    {2^{-m\gamma}}.
\end{align*}
Thus, by virtue of Young's inequality and the reconstruction bound we again obtain
a bound of required order. It remains to derive a suitable bound for the third term.
In this case, we have
\begin{align*}
    \sum_{m\geq n}&\sum_{\zeta>|k|_\s-\beta}\mint{-}_{Q(0,4\cdot 2^{-n})}
    \frac{|\langle\Pi_{x+h}\Q_\zeta\big(\bf(x{+}h)-\Gamma_{x+h,x}\bf(x)\big),D^k_1K_m(x{+}h,\cdot)\rangle|}
    {2^{-n(\gamma-|k|_\s)}}\,\mathrm{d}h \\ &\lesssim \sum_{\zeta>|k|_\s-\beta}\sum_{m\geq n} 2^{-m(\zeta+\beta-|k|_\s)}
    \mint{-}_{Q(0,4\cdot 2^{-n})}\frac{|\bf(x{+}h)-\Gamma_{x+h,x}\bf(x)|_\zeta}{2^{-n(\gamma-|k|_\s)}}\,\mathrm{d}h
    \\ &\lesssim \sum_{\zeta>|k|_\s-\beta}\sum_{m\geq n}2^{(n-m)(\zeta+\beta-|k|_\s)}
    \mint{-}_{Q(0,4\cdot 2^{-n})}\frac{|\bf(x{+}h)-\Gamma_{x+h,x}\bf(x)|_\zeta}{2^{-n(\gamma-\zeta)}}\,\mathrm{d}h,
\end{align*}
i.e.\ this time the desired bound follows immediately. This concludes our discussion of the regime $m\geq n$.

We move on with the regime $m<n$. For, following Hairer \cite{Hairer:2014} we introduce the test functions
\begin{align*}
    K^{k,\gamma+\beta}_{n,x,y} = D^k_1K_n(y,\cdot) - \sum_{\substack{l\in\N_0^d \\ |k+l|_\s<\gamma+\beta}}
    \frac{(y-x)^l}{l!}D^{k+l}_1K_n(x,\cdot).
\end{align*}
We then make use of the identity (cf.\ \cite{Hairer:2014})
\begin{align*}
    k!\Q_{\Xk}\big(\K_{m,\gamma}\bf(x{+}h)-\Gamma_{x+h,x}\K_{m,\gamma}\bf(x)\big) &=
    \langle\Recon\bf-\Pi_x\bf(x),K^{k,\gamma+\beta}_{m,x,x+h}\rangle \\ &\hspace{-4.5cm}+
    \sum_{\zeta\leq |k|_\s-\beta}\langle\Pi_{x+h}\Q_\zeta\big(\bf(x{+}h)-\Gamma_{x+h,x}\bf(x)\big),
    D^k_1K_m(x{+}h,\cdot)\rangle.
\end{align*}
As above, we bound each of the two terms on the right hand side of this identity individually.
Let us begin with the second one. To this end, we first observe that terms related to
homogeneities with $\zeta+\beta=|k|_\s$ actually do not contribute. Indeed, our assumptions
imply in this case that $\zeta\in\N_0$. Thus, as $K_0$ annihilates polynomials of scaled
degree less than or equal to $r$, these terms then vanish due to the action of the model on the canonical structure.
Hence, we can restrict the sum to homogeneities $\zeta\in\A_\gamma$ such that $\zeta < |k|_\s-\beta$.
But then we get the bound
\begin{align*}
    \sum_{\zeta<|k|_\s-\beta}&\sum_{m < n}\mint{-}_{Q(0,4\cdot 2^{-n})}
    \frac{|\langle\Pi_{x+h}\Q_\zeta\big(\bf(x{+}h)-\Gamma_{x+h,x}\bf(x)\big),
    D^k_1K_m(x{+}h,\cdot)\rangle|}{2^{-n(\gamma-|k|_\s)}}\,\mathrm{d}h \\ &\lesssim 
    \sum_{\zeta<|k|_\s-\beta}\sum_{m < n}2^{-m(\zeta+\beta-|k|_\s)}
    \mint{-}_{Q(0,4\cdot 2^{-n})}\frac{|\bf(x{+}h)-\Gamma_{x+h,x}\bf(x)|_\zeta}
    {2^{-n(\gamma-|k|_\s)}}\,\mathrm{d}h \\ &\lesssim \sum_{\zeta<|k|_\s-\beta}\sum_{m < n}
    2^{(n-m)(\zeta+\beta-|k|_\s)}\mint{-}_{Q(0,4\cdot 2^{-n})}
    \frac{|\bf(x{+}h)-\Gamma_{x+h,x}\bf(x)|_\zeta}{2^{-n(\gamma-\zeta)}}\,\mathrm{d}h,
\end{align*}
which in turn immediately entails a bound of requested type. This leaves to derive a bound for
the term incorporating the test functions $K^{k,\gamma+\beta}_{n,x,y}$. Here, we first recall
the following scaled version of Taylor's theorem as presented in \cite[Prop A.1]{Hairer:2014}.
Let $e_i$ denote the $i$th standard unit vector of $\R^d$ and let, for $\gamma'>0$,
\begin{align*}
    \partial\gamma' = \{l\in\N^d_0\setminus\{0\}\colon |l|_\s\geq\gamma',\,|l-e_{\mathfrak{m}(l)}|_\s<\gamma'\},
\end{align*}
where $\mathfrak{m}(l)=\inf\{i\in\{1,\ldots,d\}\colon l_i\neq 0\}$. Then, it holds
\begin{align}\label{scaledTaylor}
    K^{k,\gamma+\beta}_{m,x,x+h} = \sum_{\substack{l\in\N^d_0\setminus\{0\} \\ k+l\in\partial(\gamma+\beta)}}
    \int_{\R^d}D^{k+l}_1K_m(x{+}z,\cdot)\,\mu^l(h,\mathrm{d}z),
\end{align}
where $\mu^l(h,\mathrm{d}z)$ is a signed measure on $\R^d$ with total mass $h^l/l!$
and support in $\{z\in\R^d\colon z_i\in [0,h_i]\}$. Since $\gamma+\beta\notin\N$ by assumption,
the set $\partial(\gamma+\beta)$ is actually identical to 
$\{l\in\N^d_0\setminus\{0\}\colon |l|_\s>\gamma+\beta,\,|l-e_{\mathfrak{m}(l)}|_\s<\gamma+\beta\}$.
Now, one can find an absolute constant $C>0$ such that
\begin{align*}
    \sum_{m<n}&\mint{-}_{Q(0,4\cdot 2^{-n})}
    \frac{|\langle\Recon\bf-\Pi_x\bf(x),K^{k,\gamma+\beta}_{m,x,x+h}\rangle|}{2^{-n(\gamma-|k|_\s)}}\,\mathrm{d}h \\
    &\lesssim \sum_{\substack{l\in\N^d_0\setminus\{0\} \\ k+l\in\partial(\gamma+\beta)}}\sum_{m<n}
    2^{-m(\beta-|k+l|_\s)}2^{-n|l|_\s}\sup_{\eta\in\B^r_C(\R^d)}
    \frac{|\langle\Recon\bf-\Pi_x\bf(x),\eta^{2^{-m}}_x\rangle|}{2^{-n(\gamma-|k|_\s)}} \\
    &\lesssim \sum_{\substack{l\in\N^d_0\setminus\{0\} \\ k+l\in\partial(\gamma+\beta)}}\sum_{m<n}
    2^{(n-m)(\gamma+\beta-|k+l|_\s)}\sup_{\eta\in\B^r_C(\R^d)}
    \frac{|\langle\Recon\bf-\Pi_x\bf(x),\eta^{2^{-m}}_x\rangle|}{2^{-m\gamma}}.
\end{align*}
By means of Young's inequality as well as Remark \ref{reconstruction_bound2}, this is again sufficient to obtain a bound of
desired order. In particular, the asserted bound with regard to the $\F^{\gamma+\beta}_{p,q}$-norm
of $\K_\gamma\bf$ eventually follows.

It remains to verify the asserted identity \eqref{ReconConvolution}. Of course, one could opt for a proof 
which adapts the arguments developed in \cite{Hairer:2017}
to the situation of the scale $\F^\gamma_{p,q}$. But we prefer at this point to make
use of the embeddings $\F^{\gamma+\beta}_{p,q}\subset\mathcal{B}^{\gamma+\beta}_{p,q\vee p}$
and $\F^{\gamma}_{p,q}\subset\mathcal{B}^{\gamma}_{p,q\vee p}$. Indeed, just note that
all involved constructions coincide on their respective spaces, i.e.\ the asserted identity
is satisfied as it in turn already holds true for the scale of spaces $\mathcal{B}^\gamma_{p,q}$.
\end{proof}

\section{Products of modelled distributions}\label{sec:products}
An essential tool in the theory of regularity structures is the possibility to build products between modelled distributions. In this section we address this issue in the framework of Triebel-Lizorkin spaces. We recall the definition of an abstract product in a regularity structure as in \cite{Hairer:2014}.

\begin{definition}[Sector]\label{def:sector}
Given a regularity structure $(\A,\T,\G)$ a set $V \subset\T$ is called a sector if $\Gamma \bTau \in V$ for any $\bTau \in V , \ \Gamma \in \G$ and we can write $V = \bigoplus_{\alpha \in \A} V_{\alpha}$ with $V_{\alpha} \subset \T_{\alpha}$.
\end{definition}

\begin{definition}[Product]\label{def:product}
Consider a regularity structure $(\T, \A, \G)$ with two sectors $V, \overline{V}$. 
A product between $V$ and $\overline{V}$ is a map:
$$
\star\colon V \times \overline{V} \to \T
$$
such that, for any $\alpha\in\A$ and $\beta\in\A$, its restriction to $V_\alpha\times\overline{V}_\beta$
is a continuous bilinear map $\star\colon V_\alpha\times\overline{V}_\beta\to\T_{\alpha+\beta}$. 
Furthermore, it is required that the identity
\begin{align*}
\Gamma (\bTau \star \Taubar) = \Gamma \bTau \star \Gamma \Taubar
\end{align*}
holds true for all $\Gamma\in\G$, all $\bTau \in V_{\alpha}$ and all $\Taubar \in \overline{V}_{\beta}$.
\end{definition}
Finally, for a given parameter $\alpha$ we say that $\bf \in \F^{\gamma, \alpha}_{p,q}$ 
if it lies in $\F^{\gamma}_{p,q}$ and takes values in the elements of the regularity structure 
that have homogeneity of at least $\alpha,$ namely $\bf \in \T_{\ge \alpha}.$ In the framework 
provided by the above definitions we can prove the following result.

\begin{proposition}\label{prop:product}
Consider a regularity structure endowed with a product. For any two functions 
$\bf$ in $\F^{\gamma_1, \alpha_1}_{p_1, q_1}$ and $\bg$ in 
$\F^{\gamma_2, \alpha_2}_{p_2, q_2}$ with $1 \le p_i < \infty$ and 
$1 \le q_i \le \infty$ the pointwise product $\bf \bg (x) := \bf(x) \star \bg(x)$ lies in 
$\F^{\gamma, \alpha}_{p, q}$ with
\begin{align*}
\alpha = \alpha_1 {+} \alpha_2, \ \gamma = (\gamma_1 {+} \alpha_2) 
\wedge (\gamma_2 {+} \alpha_1), \ p = \frac{p_1 p_2}{p_1 {+} p_2}, \ q = q_1\vee q_2,
\end{align*}
provided that $p\geq 1.$ In addition we have the bound on the norms:
\begin{align*}
\vvvert\bf \bg\vvvert_{\F^{\gamma}_{p,q}} \lesssim 
\vvvert\bf\vvvert_{\F^{\gamma_1}_{p_1, q_1}}\vvvert\bg\vvvert_{\F^{\gamma_2}_{p_2, q_2}}.
\end{align*}
\end{proposition}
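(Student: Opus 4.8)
The plan is to reduce the whole statement to the discretised spaces of Definition \ref{def:fbar} and to the product taken on local volume means. Given $\bf\in\F^{\gamma_1}_{p_1,q_1}$ and $\bg\in\F^{\gamma_2}_{p_2,q_2}$, taking values in $\T_{\ge\alpha_1}$ and $\T_{\ge\alpha_2}$ respectively, I would pass to the discrete objects $\fbar\in\Fbar^{\gamma_1}_{p_1,q_1}$, $\gbar\in\Fbar^{\gamma_2}_{p_2,q_2}$ supplied by Proposition \ref{prop:forth} and set $\overline{\bf\bg}^{(n)}(y):=\Q_{<\gamma}\big(\fbar^{(n)}(y)\star\gbar^{(n)}(y)\big)$. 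The core of the argument is to show $\overline{\bf\bg}\in\Fbar^\gamma_{p,q}$ with $\vvvert\overline{\bf\bg}\vvvert_{\Fbar^\gamma_{p,q}}\lesssim\vvvert\fbar\vvvert_{\Fbar^{\gamma_1}_{p_1,q_1}}\vvvert\gbar\vvvert_{\Fbar^{\gamma_2}_{p_2,q_2}}$; afterwards Proposition \ref{prop:back} turns $\overline{\bf\bg}$ into a modelled distribution in $\F^\gamma_{p,q}$ with the same bound (composed with Proposition \ref{prop:forth}), and a short limiting argument identifies this modelled distribution with $\Q_{<\gamma}(\bf(x)\star\bg(x))$. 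Throughout I use that the restrictions of $\star$ to the finitely many relevant pairs of homogeneous components are bounded bilinear maps, so $|\Q_\zeta(\mathbf{a}\star\mathbf{b})|\lesssim\sum_{\beta_1+\beta_2=\zeta}|\mathbf{a}|_{\beta_1}|\mathbf{b}|_{\beta_2}$, and that $\Gamma$ respects $\star$.

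For the three conditions of Definition \ref{def:fbar} applied to $\overline{\bf\bg}$, the local bound at $n=0$ is H\"older's inequality over $\Lambda_0$ with exponents $p_1,p_2$ and $\tfrac1p=\tfrac1{p_1}+\tfrac1{p_2}$. For the translation bound, multiplicativity of $\Gamma$ and the identity $\Q_{<\gamma}=\mathrm{Id}-\Q_{\ge\gamma}$ give, for $h\in\E_n$ and $\zeta<\gamma$,
\begin{align*}
\Q_\zeta\big(\overline{\bf\bg}^{(n)}&(y{+}h)-\Gamma_{y+h,y}\overline{\bf\bg}^{(n)}(y)\big)
=\Q_\zeta\big[(\fbar^{(n)}(y{+}h)-\Gamma_{y+h,y}\fbar^{(n)}(y))\star\gbar^{(n)}(y{+}h)\big] \\
&+\Q_\zeta\big[(\Gamma_{y+h,y}\fbar^{(n)}(y))\star(\gbar^{(n)}(y{+}h)-\Gamma_{y+h,y}\gbar^{(n)}(y))\big]
+\Q_\zeta\,\Gamma_{y+h,y}\,\Q_{\ge\gamma}\big(\fbar^{(n)}(y)\star\gbar^{(n)}(y)\big).
\end{align*}
By Assumption \ref{gamma} the last term is supported on homogeneities strictly above $\gamma$, so after division by $2^{-n(\gamma-\zeta)}$ it carries a strictly summable factor $2^{-n(\delta-\gamma)}$; one pulls the corresponding suprema in $n$ outside and controls them by \eqref{localBound} for both $\fbar$ and $\gbar$, followed by H\"older. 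For the first main term I use that $\gamma\le\gamma_1+\alpha_2\le\gamma_1+\beta_2$ forces $\gamma-\zeta\le\gamma_1-\beta_1$, so $2^{n(\gamma-\zeta)}\lesssim 2^{n(\gamma_1-\beta_1)}$; after bounding $|\gbar^{(n)}(y{+}h)|_{\beta_2}\le\sum_{h'\in\E_n}|\gbar^{(n)}(y{+}h')|_{\beta_2}$ and noting that $\chi^n_y(x)$ forces $y=x_n$, the summand factorises as a power of $2^{-n}$ times $\sup_n\big(\sum_{h'\in\E_n}|\gbar^{(n)}(x_n{+}h')|_{\beta_2}\big)$ times the $\fbar$-translation quantity at its native weight $2^{-n(\gamma_1-\beta_1)}$. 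One then pulls the supremum out of the $\ell^q(n)$-norm, applies H\"older in $L^p$ with exponents $p_1,p_2$, and uses $q\ge q_1$ to pass from $\ell^q$ to $\ell^{q_1}$, so the two factors are bounded by \eqref{localBound} (for $\gbar$) and by the translation bound of $\Fbar^{\gamma_1}_{p_1,q_1}$ (for $\fbar$). The second main term is symmetric, using $\gamma\le\gamma_2+\alpha_1\le\gamma_2+\beta_1$, the elementary bound $|\Gamma_{y+h,y}\fbar^{(n)}(y)|_{\beta_1}\lesssim\sum_{\delta\ge\beta_1}|\fbar^{(n)}(y)|_\delta$ (valid since $\|h\|_\s\le2^{-n}\le1$), the supremum bound for $\fbar$ from \eqref{localBound} and Remark \ref{rem:sup_Lp}, and the translation bound of $\Fbar^{\gamma_2}_{p_2,q_2}$. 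The consistency bound is the same scheme with $\fbar^{(n)}-\fbar^{(n+1)}$ (resp. $\gbar$) in place of the translation differences, and with no boundary term because $\zeta<\gamma$ directly.

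To identify the output of Proposition \ref{prop:back} with $\Q_{<\gamma}(\bf(x)\star\bg(x))$: by Proposition \ref{prop:forth}(ii) and Remark \ref{rem:sup_Lp} one has $\Q_{\beta_1}\fbar^{(n)}(x_n)\to\Q_{\beta_1}\bf(x)$ in $L^{p_1}$ and $\Q_{\beta_2}\gbar^{(n)}(x_n)\to\Q_{\beta_2}\bg(x)$ in $L^{p_2}$; continuity of the relevant bilinear maps, H\"older and the uniform $L^{p_i}$-boundedness of the approximations then give $\Q_\zeta\big(\fbar^{(n)}(x_n)\star\gbar^{(n)}(x_n)\big)\to\Q_\zeta\big(\bf(x)\star\bg(x)\big)$ in $L^p$ for each $\zeta<\gamma$; since $\|x-x_n\|_\s\lesssim2^{-n}$, $\Gamma_{x,x_n}$ converges to the identity on $\T^-_\gamma$ in operator norm, so applying $\Gamma_{x,x_n}\Q_{<\gamma}$ and passing to the limit (triangle inequality plus dominated convergence) shows the $L^p$-limit of Proposition \ref{prop:back} is exactly $\Q_{<\gamma}(\bf(x)\star\bg(x))$. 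Stringing together Proposition \ref{prop:back}, the discrete product bound and Proposition \ref{prop:forth} yields the asserted norm estimate, and $\bf\bg$ visibly takes values in $\T_{\ge\alpha_1+\alpha_2}$. I expect the genuine obstacle to be the cross term $(\Gamma_{y+h,y}\fbar^{(n)}(y))\star(\gbar^{(n)}(y{+}h)-\Gamma_{y+h,y}\gbar^{(n)}(y))$ and its mirror image: one must split a pointwise product of an $L^{p_1}(\ell^{q_1})$-quantity and an $L^{p_2}(\ell^{q_2})$-quantity into the target $L^p(\ell^q)$ while only one factor carries a difference, and it is the strengthened local estimate \eqref{localBound} — control of the $L^{p_i}$-norm of the supremum over \emph{scales} of nearby discrete values — that makes this possible by allowing the non-differenced factor to be extracted from the $\ell^q$-sum as an $\ell^\infty$-factor before H\"older. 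The secondary, but unavoidable, point is the exponent bookkeeping: the choices $\gamma=(\gamma_1{+}\alpha_2)\wedge(\gamma_2{+}\alpha_1)$, $p=p_1p_2/(p_1{+}p_2)$, $q=q_1\vee q_2$ together with Assumption \ref{gamma} are precisely what makes every estimate above balance.
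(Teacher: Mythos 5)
Your proposal follows essentially the same route as the paper's proof: pass to the discrete spaces $\Fbar$ via Proposition \ref{prop:forth}, define the discrete product $\Q_{<\gamma}(\fbar^{(n)}\star\gbar^{(n)})$, split the translation difference into the two ``Leibniz'' cross terms plus the residual coming from the non-commutativity of $\Gamma$ and $\Q_{<\gamma}$ (your residual $\Q_\zeta\Gamma_{y+h,y}\Q_{\geq\gamma}(\fbar^{(n)}\star\gbar^{(n)})$ is exactly the paper's $\mathrm{Res}_\gamma$), estimate each term via H\"older in $L^p$, the scale-supremum bound \eqref{localBound}/Remark \ref{rem:sup_Lp}, the weight comparisons forced by the choice of $\gamma$, and the decay $2^{-n\varepsilon}$ from Assumption \ref{gamma}, and finally convert back via Proposition \ref{prop:back} and identify the limit with $\bf\bg$ by $L^{p_i}$-convergence of the factors. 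The decomposition, the key lemmas invoked, the exponent bookkeeping and the way the error term is absorbed all match the paper.
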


\begin{proof}
We articulate the proof in the following way. First we consider a sequence of 
functions $\bpi^{(n)}: \Lambda_n \to \T^{-}_{\gamma}$ which is a discrete version of the product, namely:
\begin{align*}
\bpi^{(n)}(x) = \fbar^{(n)}(x) \star \gbar^{(n)}(x).
\end{align*}
Note that this definition may produce contributions in $\T_{\geq\gamma}$. Since the statement
of the proposition only requires us to encode the product up to homogeneities $<\gamma$,
we set all contributions in $\T_{\geq\gamma}$ to zero, i.e.\ we will actually study 
\begin{align*}
\bpi^{(n)}_{<\gamma}(x) := 
\sum_{k+l<\gamma}\Q_k\fbar^{(n)}(x)\star\Q_l\gbar^{(n)}(x). 
\end{align*}
Again, since the definitions of $\fbar^{(n)}$ and $\gbar^{(n)}$ in \eqref{average}
incorporate re-expansions due to the action of the elements $\Gamma\in G$, $\bpi^{(n)}_{<\gamma}(x)$
may have contributions in $\T_{<\alpha}$. Thus, we aim to prove that 
$\bpi^{(n)}_{<\gamma} \in \bar{\F}^{\gamma}_{p,q}$. Then, Proposition \ref{prop:back} 
implies that $\bpi^{(n)}_{<\gamma}$ corresponds to a function $\bpi_{<\gamma} \in \F^{\gamma}_{p,q}$,
which again may have contributions in $\T_{<\alpha}$.
Finally, it thus suffices to show that $\Q_{\geq\alpha}\bpi_{<\gamma} = \bf \bg$ in order
to conclude the proof. 

In the following calculations we will use 
that uniformly over $\zeta$ in a bounded set, there exist only a finite number of 
homogeneities $\zeta_1, \zeta_2 \in \A$ with $\zeta_i \ge \alpha_i$ such that 
$\zeta_1{+} \zeta_2 = \zeta$. The global bound follows directly from H\"older's inequality, 
so let us concentrate on the translation bound for $\bpi^{(n)}_{<\gamma}$. Fix $\zeta<\gamma$. 
Let us compute $\bpi^{(n)}_{<\gamma}(y{+}h)\, - \Gamma_{y{+}h, y}\bpi^{(n)}_{<\gamma}(y)$.
This is a purely algebraic manipulation:
\begin{align*}
&\bpi^{(n)}_{<\gamma}(y{+}h)\, - \Gamma_{y{+}h, y}\bpi^{(n)}_{<\gamma}(y) \\
&=\sum_{k+l<\gamma}\Big\{\Q_k\fbar^{(n)}(y{+}h)\star\Q_l\gbar^{(n)}(y{+}h)-
\Gamma_{y+h,y}\Q_k\fbar^{(n)}(y)\star\Gamma_{y+h,y}\Q_l\gbar^{(n)}(y)\Big\} \\
&=\sum_{k+l<\gamma}\Q_k\big(\fbar^{(n)}(y{+}h){-}\Gamma_{y+h,y}\fbar^{(n)}(y)\big)\star\Q_l\gbar^{(n)}(y{+}h) \\
&\hspace{1cm}+\sum_{k+l<\gamma}\Q_k\Gamma_{y+h,y}\fbar^{(n)}(y)\star
\Q_l\big(\gbar^{(n)}(y{+}h){-}\Gamma_{y+h,y}\gbar^{(n)}(y)\big) \\
&\hspace{1cm}+\mathrm{Res}_\gamma(\fbar^{(n)},\gbar^{(n)}), 
\end{align*}
where we introduced the ``error term''
\begin{align*}
\mathrm{Res}_\gamma(\fbar^{(n)},\gbar^{(n)}) &:= 
\sum_{k+l<\gamma}\Q_k\Gamma_{y+h,y}\fbar^{(n)}(y)\star\Q_l\Gamma_{y+h,y}\gbar^{(n)}(y) \\
&\hspace{1cm}-\sum_{k+l<\gamma}\Gamma_{y+h,y}\Q_k\fbar^{(n)}(y)\star\Gamma_{y+h,y}\Q_l\gbar^{(n)}(y) \\
&=\Q_{<\gamma}\bigg(\sum_{k+l\geq\gamma}\Gamma_{y+h,y}\Q_k\fbar^{(n)}(y)
\star\Gamma_{y+h,y}\Q_l\gbar^{(n)}(y) \bigg).
\end{align*}
To obtain the first identity, we made use of the property that the product commutes
with the action of the elements $\Gamma\in\G$ from the structure group.
Hence, we may bound
\begin{align*}
|\bpi^{(n)}_{<\gamma}&(y{+}h)\, - \Gamma_{y{+}h, y}\bpi^{(n)}_{<\gamma}(y)|_\zeta \\
&\leq\sum_{\zeta_1+\zeta_2=\zeta}|\fbar^{(n)}(y{+}h){-}\Gamma_{y+h,y}\fbar^{(n)}(y)|_{\zeta_1}
|\gbar^{(n)}(y{+}h)|_{\zeta_2} \\
&\hspace{1cm}+\sum_{\zeta_1+\zeta_2=\zeta}|\Gamma_{y+h,y}\fbar^{(n)}(y)|_{\zeta_1}
|\gbar^{(n)}(y{+}h){-}\Gamma_{y+h,y}\gbar^{(n)}(y)|_{\zeta_2} \\
&\hspace{1cm}+|\mathrm{Res}_\gamma(\fbar^{(n)},\gbar^{(n)})|_\zeta.
\end{align*}
We proceed by estimating each of these contributions. Let us begin with the derivation of an 
appropriate bound for the last term. To this end, note first that due to Assumption \ref{gamma}
the sum defining $\mathrm{Res}_\gamma(\fbar^{(n)},\gbar^{(n)})$ actually runs over all
homogeneities $k\in\A_{\gamma_1}$ and $l\in\A_{\gamma_2}$ such that $k+l>\gamma$.
Hence, we find $\varepsilon>0$ such that
\begin{align*}
|\mathrm{Res}_\gamma(\fbar^{(n)},\gbar^{(n)})|_\zeta &\lesssim
\sum_{k+l>\gamma}\sum_{\zeta_1+\zeta_2=\zeta}\|h\|_{\s	}^{k+l-\zeta_1-\zeta_2}
|\fbar^{(n)}(y)|_k|\gbar^{(n)}(y)|_l \\
&\lesssim 2^{-n\varepsilon}\sum_{k+l>\gamma}2^{-n(\gamma-\zeta)}|\fbar^{(n)}(y)|_k|\gbar^{(n)}(y)|_l,
\end{align*}
uniformly over all $n\geq 0$, all $y\in\Lambda_n$, all $h\in\mathcal{E}_n$ and all $\zeta\in\A_\gamma$.
From this we can deduce with the help of H\"{o}lder's inequality the bound
\begin{align*}
&\bigg\| \bigg( \sum_{ n\ge 0 } 
\Big\vert \sum_{y \in \Lambda_n} \sum_{h \in \E_n}\, 
\frac{|\mathrm{Res}_\gamma(\fbar^{(n)},\gbar^{(n)})|_\zeta}{2^{-n(\gamma - \zeta)}}
\chi^n_y(x) \Big\vert^q \bigg)^{\frac{1}{q}}\bigg\|_{L^p} \\
&\lesssim \bigg(\sum_{n\geq 0}2^{-n\varepsilon q}\bigg)^\frac{1}{q}\sup_{k+l>\gamma}
\bigg\|\sup_{n\geq 0}\sum_{y\in\Lambda_n}\sum_{h\in\mathcal{E}_n}
|\fbar^{(n)}(y)|_{k}\chi^n_y(x)\bigg\|_{L^{p_1}}
\bigg\|\sup_{n\geq 0}\sum_{y\in\Lambda_n}\sum_{h\in\mathcal{E}_n}
|\gbar^{(n)}(y)|_{l}\chi^n_y(x)\bigg\|_{L^{p_2}},
\end{align*}
which is of required order due to Remark \ref{rem:sup_Lp}.
Regarding the first term, we immediately obtain the bound
\begin{align*}
\bigg\| \bigg( \sum_{ n\ge 0 } 
\Big\vert \sum_{y \in \Lambda_n} \sum_{h \in \E_n}&\, \frac{|\fbar^{(n)} (y{+}h) - 
\Gamma_{y{+}h, y} \fbar^{(n)}|_{\zeta_1} }{2^{-n(\gamma - \zeta)}}|\gbar^{(n)}(y{+}h)|_{\zeta_2} 
\chi^n_y(x) \Big\vert^q \bigg)^{\frac{1}{q}}\bigg\|_{L^p} \\
&\leq\bigg\|\sup_{n\geq 0}\sum_{y\in\Lambda_n}\sum_{h\in\mathcal{E}_n}
|\gbar^{(n)}(y{+}h)|_{\zeta_2}\chi^n_y(x)\bigg\|_{L^{p_2}}
\vvvert\fbar\vvvert_{\bar{F}^{\gamma_1}_{p_1,q_1}},
\end{align*}
uniformly over $\zeta_1+\zeta_2=\zeta$; and which is therefore
of required order due to Remark \ref{rem:sup_Lp} and Proposition \ref{prop:forth}.
Analogously, one obtains a bound of requested order for the second term.
This establishes the translation bound.
Since the consistency bound follows in exactly the same way, we conclude that 
$\bpi^{(n)}_{<\gamma} \in \bar{\F}^{\gamma}_{p,q}.$ 
This implies in particular that there exists a function $\bpi:=\bpi_{<\gamma} \in \F^{\gamma}_{p,q}$ 
such that $$\lim_{n\to\infty} \Q_{\zeta} \bpi^{(n)}_{<\gamma} = \Q_{\zeta}\bpi \ \text{ in } L^p. $$ Now, 
since we know that $\bpi^{(n)}_{<\gamma} = \sum_{k+l<\gamma}\Q_l\bf^{(n)}\star\Q_k\bg^{(n)}$ and 
$$\Q_{\zeta} \bf^{(n)} \to \Q_{\zeta}\bf \ \text{ in } L^{p_1}, \ \ 
\Q_{\zeta} \bg^{(n)} \to \Q_{\zeta}\bg  \ \text{ in } L^{p_2},$$
it follows from H\"older's inequality that $\Q_{\zeta}\bpi^{(n)}_{<\gamma} \to \Q_{\zeta}\bf \bg$ in $L^p$
for all homogeneities $\zeta\in [\alpha,\gamma)\cap\A$. This completes the proof of the result.
\end{proof}

\begin{remark}
The condition that $p$ shall at least be $1$ in the previous theorem is not necessary. However, 
stating the theorem in the general case would require defining Triebel-Lizorkin spaces in the case $p, q \in (0,1).$
Since the article is already quite lengthy, we refrained from doing so.
\end{remark}

\begin{remark}
The previous result together with Theorem \ref{reconstruction_theorem} applied to 
the polynomial regularity structure allows to deduce that the product of two 
Triebel-Lizorkin distributions is a well-defined continuous bilinear map provided 
that  $\gamma > 0$; thus recovering a well-known result from harmonic analysis.
\end{remark}

\section{A note on the Besov scale} \label{sec:Besov}
In this section, we want to discuss a phenomenon which is well known from the classical theory
of Besov spaces, namely that one obtains the same space no matter whether one considers 
differences of a function or volume means of differences. We aim to show that this is
still the case in the framework of regularity structures.

\begin{definition}
Let $(\A,\T,\mathcal{G})$ be a regularity structure, and let $(\Pi,\Gamma)$ be a model.
Consider $1\leq p \leq\infty$, $1\leq q \leq \infty$ as well as $\gamma > 0$. 
Then, we let $\mathcal{D}_{p,q}^\gamma$ be the (Banach) space of all functions 
$\bf\colon\R^d\to\T_{\gamma}^-$ such that
\begin{itemize}
\item[i)] $\displaystyle\sup_{\zeta\in\A_{\gamma}}\big\||\bf(x)|_\zeta\big\|_{L^p} < \infty$,
\item[ii)] $\displaystyle\sup_{\zeta\in\A_{\gamma}}\bigg\|\Big\|
\mint{-}_{Q(0,4\lambda)}\frac{|\bf(x{+}h)-\Gamma_{x+h,x}\bf(x)|_\zeta}
{\lambda^{\gamma-\zeta}}\,\mathrm{d}h\Big\|_{L^p}\bigg\|_{L^q_\lambda}<\infty$.
\end{itemize}
The associated norm for $\bf\in\mathcal{D}_{p,q}^\gamma$ is denoted by 
$\vvvert\bf\vvvert_{\mathcal{D}_{p,q}^\gamma}$.
\end{definition}

For the special case $p<\infty$ and $q=\infty$, the space 
$\mathcal{D}^\gamma_{p,q}$ was already introduced and studied in the work 
of Hairer and Labb\'{e} \cite{Hairer:2015} on multiplicative stochastic heat equations.

\begin{definition}\label{def:discreteBesov}
Let $(\A,\T,\mathcal{G})$ be a regularity structure, and let $(\Pi,\Gamma)$ be a model.
Consider $1\leq p \leq\infty$, $1\leq q \leq \infty$ as well as $\gamma > 0$.
We denote by $\bar{\mathcal{B}}^\gamma_{p,q}$ the (Banach) space of all sequences of maps 
\begin{align*}
\fbar^{(n)}\colon\Lambda_n\to\T_{\gamma}^-,\quad n\geq 0,
\end{align*}
such that, uniformly over all $\zeta\in\A_{\gamma}$,
\begin{itemize}
\item[i)] $\displaystyle\Big(\sum_{y\in\Lambda_0}
\big|\fbar^{(0)}(y)\big|_\zeta^p\Big)^\frac{1}{p} < \infty$,
\item[ii)] $\displaystyle\bigg(\sum_{n\geq 0}\sum_{h\in\E_{n}}\bigg\|
\frac{|\fbar^{(n)}(y{+}h)-\Gamma_{y{+}h,y}\fbar^{(n)}(y)|_\zeta}
{2^{-n(\gamma-\zeta)}}\bigg\|_{\ell^p_n}^q\bigg)^\frac{1}{q}<\infty$,
\item[iii)] $\displaystyle\bigg(\sum_{n\geq 0}\bigg\|
\frac{|\fbar^{(n)}(y)-\fbar^{(n+1)}(y)|_\zeta}
{2^{-n(\gamma-\zeta)}}\bigg\|_{\ell^p_n}^q\bigg)^\frac{1}{q}<\infty$.
\end{itemize}
The associated norm for an element $\fbar\in\bar{\mathcal{B}}_{p,q}^\gamma$ will be denoted by 
$\vvvert\fbar\vvvert_{\bar{\mathcal{B}}^\gamma_{p,q}}$.
\end{definition}

Again, there is essentially no difference between the spaces $\D$ and $\bar{\mathcal{B}}$.
This is the content of the following result.

\begin{proposition}\label{prop:sequence}
Let $\gamma>0$ and $1\leq p,q\leq\infty$. Given $\bf\in\mathcal{D}^\gamma_{p,q}$
and $n\geq 0$, we define the maps $\fbar^{(n)}\colon\Lambda_n\to\T^-_\gamma$ via
\begin{align}\label{average2}
\fbar^{(n)}(y) = \mint{-}_{Q(y,2^{-n})}\Gamma_{y,z}\bf(z)\,\mathrm{d}z.
\end{align}
We then have $\fbar\in\bar{\mathcal{B}}^\gamma_{p,q}$ and 
$\vvvert\fbar\vvvert_{\bar{\mathcal{B}}^\gamma_{p,q}}\lesssim
\vvvert\bf\vvvert_{\mathcal{D}^\gamma_{p,q}}$. In addition, it holds
\begin{align*}
    \Gamma_{x,x_n}\fbar^{(n)}(x_n) \to \bf(x) \quad\text{in } L^p.
\end{align*}
On the other side, for any $\fbar\in\bar{\mathcal{B}}^\gamma_{p,q}$ there is 
$\bf\colon\R^d\to\T^-_\gamma$ such that
\begin{align*}
    \bf_n(x):=\Gamma_{x,x_n}\fbar^{(n)}(x_n) \to \bf(x) \quad\text{in } L^p,
\end{align*}
where actually $\bf\in\mathcal{D}^\gamma_{p,q}$ with $\vvvert\bf\vvvert_{\mathcal{D}^\gamma_{p,q}}
\lesssim\vvvert\fbar\vvvert_{\bar{\mathcal{B}}^\gamma_{p,q}}$.
\end{proposition}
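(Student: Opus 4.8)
The plan is to follow the by-now routine strategy used for Proposition \ref{prop:forth} and Proposition \ref{prop:back}, but transposing every estimate from the Triebel--Lizorkin ordering of norms to the Besov ordering. Concretely: in $\F^\gamma_{p,q}$ the norm is $\big\|\,\|\cdot\|_{L^q_\lambda}\,\big\|_{L^p}$ (resp.\ $\ell^q$ inside $L^p$ on the sequence side), whereas here we have $\big\|\,\|\cdot\|_{L^p}\,\big\|_{L^q_\lambda}$ (resp.\ $\ell^q$ outside, i.e.\ an $\ell^q_n(L^p)$ structure via $\|\cdot\|_{\ell^p_n}$). The advantage of this reordering is that the Fefferman--Stein vector-valued maximal inequality, which was the analytic heart of the $\F$-theory, is no longer needed: one can simply apply the scalar Hardy--Littlewood maximal inequality (bounded on $L^p$ for $p>1$, and for $p=\infty$ trivially, and for $p=1$ one uses instead the weak-type estimate together with the summability in $n$) inside each fixed dyadic scale $n$, then sum the resulting geometric series in $n$ afterwards. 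This is precisely the simplification that makes the Besov case ``easier'' than the Triebel--Lizorkin case.

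First I would prove the forward direction: given $\bf\in\D^\gamma_{p,q}$, define $\fbar^{(n)}$ by the local average \eqref{average2} and verify the three bounds in Definition \ref{def:discreteBesov}. The local bound is immediate from Jensen. For the translation bound, one reproduces verbatim the chain of inequalities in the proof of Proposition \ref{prop:forth}(i): the difference $\fbar^{(n)}(y{+}h)-\Gamma_{y+h,y}\fbar^{(n)}(y)$ is controlled, after splitting $\Gamma_{x+z+h,x}$ through an intermediate point, by a sum over $\delta\geq\zeta$ of averages of $|\Delta_{\Gamma,h}\bf|_\delta$ over a ball of radius $\sim 2^{-n}$; one then takes $\|\cdot\|_{\ell^p_n}$, recognises the average over $Q(x,c2^{-n})$ as dominated by (a fixed multiple of) the continuous average $\mint{-}_{Q(0,4\lambda)}$ at scale $\lambda\sim 2^{-n}$, and finally takes $\ell^q_n$. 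Here the finitely-many-points nature of $\E_n$ and the equivalence $\sum_{n}(\cdot)^q\sim\int_0^1(\cdot)^q\,\lambda^{-1}\mathrm{d}\lambda$ close the argument. The consistency bound $|\fbar^{(n)}(y)-\fbar^{(n+1)}(y)|_\zeta\lesssim\sum_{\beta\geq\zeta}2^{-n(\beta-\zeta)}\mint{-}_{Q(0,2^{-n})}|\Delta_{\Gamma,h}\bf|_\beta\,\mathrm{d}h$ is handled the same way. The $L^p$-convergence $\Gamma_{x,x_n}\fbar^{(n)}(x_n)\to\bf(x)$ is exactly Proposition \ref{prop:forth}(ii), whose proof uses only the pointwise bound $|\bf(x)-\bf_n(x)|_\zeta\lesssim\sum_{\beta\geq\zeta}2^{-n(\gamma-\zeta)}\mint{-}_{Q(0,2\cdot2^{-n})}|\Delta_{\Gamma,h}\bf(x)|_\beta\,2^{n(\gamma-\beta)}\,\mathrm{d}h$ together with the finiteness of the $\D^\gamma_{p,q}$-norm, so it transfers with no change.

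For the converse direction, given $\fbar\in\bar{\mathcal B}^\gamma_{p,q}$, set $\bf_n(x)=\Gamma_{x,x_n}\fbar^{(n)}(x_n)$. The Cauchy property of $(\Q_\zeta\bf_n)_n$ in $L^p$ follows from \eqref{LpConv} together with a Besov analogue of Remark \ref{rem:consistency1} — namely that $\big\|\sum_{y}|\fbar^{(n)}(y)-\Gamma_{y,y+h}\fbar^{(n+1)}(y{+}h)|_\zeta\chi^n_y(x)\big\|_{L^p}$, summed over $h\in\E_{n+1}^{M,0}$, is $\ell^q_n$-summable with the weight $2^{n(\gamma-\zeta)}$; this Besov version of the consistency bound is obtained by the same induction over $M$ from the translation and consistency bounds of $\bar{\mathcal B}$, and then H\"older in $n$ converts $\ell^q_n$-summability into ordinary summability, giving the Cauchy estimate $\sum_{n\geq n_0}\|\cdots\|_{L^p}\lesssim 2^{-n_0(\gamma-\zeta)}\vvvert\fbar\vvvert_{\bar{\mathcal B}^\gamma_{p,q}}$. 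Denoting the limit by $\bf$, one must then show $\bf\in\D^\gamma_{p,q}$ with the right bound; the local bound is again contained in the convergence argument, and for the translation bound one uses the decomposition \eqref{decomp}: $\Delta_{\Gamma,h}\bf(x)=(\bf(x{+}h)-\bf_n(x{+}h))+\Delta_{\Gamma,h}\bf_n(x)+\Gamma_{x+h,x}(\bf_n(x)-\bf(x))$. The middle term reduces, via $h'\in\E_n^{M,0}$ with $M=4$ and the identity $\Delta_{\Gamma,h}\bf_n(x)=\Gamma_{x+h,x_n+h'}(\fbar^{(n)}(x_n{+}h')-\Gamma_{x_n+h',x_n}\fbar^{(n)}(x_n))$, directly to the (Besov) translation bound of $\bar{\mathcal B}$; the first and third terms are telescoping sums $\sum_{m\geq n}(\bf_m-\bf_{m+1})$, bounded by the (Besov) consistency bound, where — and this is the one place to be careful — one invokes the scalar Hardy--Littlewood maximal function at scale $m$ with an exponent $0<\eta<1$ chosen so that $\gamma':=\gamma-\zeta+|\s|(\eta-1)/\eta>0$, exactly as in Proposition \ref{prop:back}, and then uses $\mathcal M\colon L^{p/\eta}\to L^{p/\eta}$ (valid since $p/\eta>1$) followed by summation of the geometric factor $2^{(n-m)\gamma'}$ in $m$ \emph{before} taking $\ell^q_n$. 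The main obstacle — if one can call it that — is purely bookkeeping: keeping straight which index the $\ell^q$ acts on and making sure that, in the Besov ordering, every application of the maximal inequality and of Young's inequality for convolutions happens at a fixed scale so that no vector-valued estimate is ever required; once that is organised the constants are all absolute and the proof is a faithful transcription of the two propositions already proven for $\F^\gamma_{p,q}$.
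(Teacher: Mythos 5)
Your proposal is correct, and the overall architecture (forward direction via the local averages \eqref{average2}, backward direction via the decomposition \eqref{decomp} and a Cauchy argument) is the same as the paper's. The one place where your route genuinely diverges is the treatment of the first term $\bf(x{+}h)-\bf_n(x{+}h)$ of \eqref{decomp} in the backward direction. You telescope it as $\sum_{m\geq n}(\bf_m-\bf_{m+1})$ and control each scale $m$ via the scalar Hardy--Littlewood maximal function with the $\eta<1$ trick (so $\mathcal M$ acts on $L^{p/\eta}$ with $p/\eta>1$), summing the geometric factor $2^{(n-m)\gamma'}$ before taking $\ell^q_n$; the paper instead first moves the $h$-average past $\|\cdot\|_{L^p}$ using Minkowski and translation invariance of Lebesgue measure (with a separate case $q/p<1$ handled by the plain HL maximal operator), reducing the first term to the same shape as the third term, and then finishes by Young's inequality for convolutions. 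Both are sound; the paper's reduction is slightly more economical, your version is more uniform in $p,q$. Two small remarks: (1) your parenthetical about needing a weak-type estimate for $p=1$ is superfluous, since the $\eta<1$ trick you invoke afterwards already makes $p/\eta>1$ for every $p\geq 1$; (2) for the forward direction the paper uses a double-average formulation rather than re-deriving the single-average bound from Proposition \ref{prop:forth}(i), but the two are equivalent once one takes $\|\cdot\|_{\ell^p_n}$, so this is only a bookkeeping difference. Your key observation --- that in the Besov ordering the Fefferman--Stein vector-valued maximal inequality is never needed because each maximal function can be applied at a single fixed scale and the sum in $n$ is taken afterwards --- is exactly the right point.
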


\begin{proof}
    \textit{Step 1:} Let us prove the first assertion in the statement above, i.e.\
    we fix a modelled distribution $\bf\in\D^\gamma_{p,q}$ and we aim to show that
    $\fbar\in\bar{\mathcal{B}}^\gamma_{p,q}$ where $\fbar$ is defined as in \eqref{average2}.
    Since the respective local bound follows directly, we immediately turn to the
    respective translation bound. For this, we observe that
    \begin{align*}
        &\frac{|\fbar^{(n)}(y{+}h)-\Gamma_{y+h,y}\fbar^{(n)}(y)|_\zeta}{2^{-n(\gamma-\zeta)}}\\
        &\hspace{0.3cm}\lesssim \mint{-}_{Q(y,2^{-n})}\mint{-}_{Q(x,2^{-n})}
        \frac{|\Gamma_{y+h,z+h+y-x}(\bf(z{+}h{+}y{-}x)-\Gamma_{z+h+y-x,x}\bf(x))|_\zeta}{2^{-n(\gamma-\zeta)}}
				\,\mathrm{d}z\,\mathrm{d}x\\
        &\hspace{0.3cm}\lesssim \sum_{\beta\geq\zeta}
        \mint{-}_{Q(y,2^{-n})}\mint{-}_{Q(x,2\cdot 2^{-n})}
        \frac{|\bf(z{+}h)-\Gamma_{z+h,x}\bf(x)|_\beta}{2^{-n(\gamma-\beta)}}
				\,\mathrm{d}z\,\mathrm{d}x.
    \end{align*}
    But this already leads to a bound of desired order. The consistency bound can be derived
    analogously. The convergence assertion follows from the same argument as the corresponding one
    for the Triebel--Lizorkin scale $\F^\gamma_{p,q}$.
    
    \textit{Step 2:} 
    Let us turn to the second part of the statement, i.e.\ consider $\fbar\in\bar{\mathcal{B}}^\gamma_{p,q}$.
    Along the same lines as for the Triebel--Lizorkin scale, one obtains the bound
    \begin{align*}
        \sum_{n\geq n_0}\big\||\bf_{n+1}(x)-\bf_n(x)|_\zeta\big\|_{L^p}\lesssim 2^{-n_0(\gamma-\zeta)} 
        \vvvert\fbar\vvvert_{\bar{\mathcal{B}}^\gamma_{p,q}},
    \end{align*}
    uniformly over all $n_0\geq 0$. Denote by $\bf\colon\R^d\to\T^-_\gamma$ the associated
    limit in $L^p$. We aim to show that $\bf\in\mathcal{D}^\gamma_{p,q}$ with a corresponding
    bound for its norm. To this end, it is again convenient to bound
    \begin{align*}
        \bigg\|\Big\|\mint{-}_{Q(0,4\lambda)}&
        \frac{|\bf(x{+}h)-\Gamma_{x+h,x}\bf(x)|_\zeta}
        {\lambda^{\gamma-\zeta}}\,\mathrm{d}h\Big\|_{L^p}\bigg\|_{L^q_\lambda} \\
        &\lesssim \bigg(\sum_{n\geq 0}\bigg\|\mint{-}_{Q(0,4\cdot2^{-n})}
        \frac{|\bf(x{+}h)-\Gamma_{x+h,x}\bf(x)|_\zeta}
        {2^{-n(\gamma-\zeta)}}\,\mathrm{d}h\bigg\|^q_{L^p}\bigg)^\frac{1}{q}.
    \end{align*}
    Now, we make use of the decomposition from \eqref{decomp}. Contributions from the last two terms
    can be treated along the same lines as in the case of the Triebel--Lizorkin scale. Thus, let us
    only discuss the contribution due to the first term, i.e.\ $\bf(x{+}h)-\bf_n(x{+}h)$ with $x\in\R^d$,
    $n\geq 0$ and $h\in Q(0,4\cdot 2^{-n})$. Then, we distinguish between two cases. Let us first
    discuss the case where $q/p\geq 1$. In this case, we simply bound (with obvious modification
    if $p=\infty$ and/or $q=\infty$) 
    \begin{align*}
        \bigg\|&\mint{-}_{Q(0,4\cdot2^{-n})}
        \frac{|\bf(x{+}h)-\bf_n(x{+}h)|_\zeta}{2^{-n(\gamma-\zeta)}}\,\mathrm{d}h\bigg\|_{L^p}^q \\
        &\qquad\lesssim \mint{-}_{Q(0,4\cdot2^{-n})}
        \Big\|\frac{|\bf(x{+}h)-\bf_n(x{+}h)|_\zeta}{2^{-n(\gamma-\zeta)}}\Big\|^q_{L^p}\,\mathrm{d}h
        =\Big\|\frac{|\bf(x)-\bf_n(x)|_\zeta}{2^{-n(\gamma-\zeta)}}\Big\|^q_{L^p}.
    \end{align*}
    On the other side, if $q/p<1$ and therefore in particular $p>1$, one can employ the Hardy--Littlewood
    maximal inequality to obtain the same type of bound. Thus, no matter which case applies, we have
    reduced the argument to the situation of the third term on the right hand side of \eqref{decomp}
    which concludes the proof.
\end{proof}

Recall from \cite{Hairer:2017} that $\vvvert\bf\vvvert_{\mathcal{B}^\gamma_{p,q}}
\sim\vvvert\fbar\vvvert_{\bar{\mathcal{B}}^\gamma_{p,q}}$, where $\fbar$ is defined by the local averages
as in \eqref{average2}. We therefore obtain the announced result from the beginning of this section.

\begin{corollary}
Let $1\leq p,q\leq \infty$. Then $\D^\gamma_{p,q} = \mathcal{B}^\gamma_{p,q}$ in the
sense of equivalent norms.
\end{corollary}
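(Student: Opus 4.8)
The plan is to deduce the corollary by chaining together Proposition \ref{prop:sequence} with the corresponding equivalence on the Besov side due to Hairer and Labbé. Recall that $\bar{\mathcal{B}}^\gamma_{p,q}$ is exactly the discrete (sequence-space) model of Definition \ref{def:discreteBesov}, and that by \cite[Thm 2.15]{Hairer:2017} the map sending a modelled distribution $\bf \in \mathcal{B}^\gamma_{p,q}$ to its local averages $\fbar$ defined in \eqref{average2} is an isomorphism onto $\bar{\mathcal{B}}^\gamma_{p,q}$, with $\vvvert\bf\vvvert_{\mathcal{B}^\gamma_{p,q}} \sim \vvvert\fbar\vvvert_{\bar{\mathcal{B}}^\gamma_{p,q}}$; moreover its inverse is realised by the very same $L^p$-limit $\Gamma_{x,x_n}\fbar^{(n)}(x_n) \to \bf(x)$ that appears in Proposition \ref{prop:sequence}. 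Since both Proposition \ref{prop:sequence} and the Hairer--Labbé equivalence are stated uniformly for $1 \leq p,q \leq \infty$, the whole argument covers the full range claimed in the corollary.

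First I would establish the continuous embedding $\mathcal{D}^\gamma_{p,q} \subset \mathcal{B}^\gamma_{p,q}$. Given $\bf \in \mathcal{D}^\gamma_{p,q}$, the first part of Proposition \ref{prop:sequence} shows that its local averages $\fbar$ lie in $\bar{\mathcal{B}}^\gamma_{p,q}$ with $\vvvert\fbar\vvvert_{\bar{\mathcal{B}}^\gamma_{p,q}} \lesssim \vvvert\bf\vvvert_{\mathcal{D}^\gamma_{p,q}}$, and that $\Gamma_{x,x_n}\fbar^{(n)}(x_n) \to \bf(x)$ in $L^p$. On the other hand, by the Hairer--Labbé equivalence this same sequence $\fbar \in \bar{\mathcal{B}}^\gamma_{p,q}$ is the local-average sequence of a (unique) element $\tilde{\bf} \in \mathcal{B}^\gamma_{p,q}$, and the reconstruction there is again given by $\Gamma_{x,x_n}\fbar^{(n)}(x_n) \to \tilde{\bf}(x)$ in $L^p$. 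Uniqueness of limits in $L^p$ forces $\bf = \tilde{\bf}$, so $\bf \in \mathcal{B}^\gamma_{p,q}$ with $\vvvert\bf\vvvert_{\mathcal{B}^\gamma_{p,q}} \sim \vvvert\fbar\vvvert_{\bar{\mathcal{B}}^\gamma_{p,q}} \lesssim \vvvert\bf\vvvert_{\mathcal{D}^\gamma_{p,q}}$.

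For the reverse inclusion $\mathcal{B}^\gamma_{p,q} \subset \mathcal{D}^\gamma_{p,q}$ I would run the same argument in the opposite direction. Starting from $\bf \in \mathcal{B}^\gamma_{p,q}$, the Hairer--Labbé result gives its local averages $\fbar \in \bar{\mathcal{B}}^\gamma_{p,q}$ with $\vvvert\fbar\vvvert_{\bar{\mathcal{B}}^\gamma_{p,q}} \sim \vvvert\bf\vvvert_{\mathcal{B}^\gamma_{p,q}}$ and $\Gamma_{x,x_n}\fbar^{(n)}(x_n) \to \bf(x)$ in $L^p$. Feeding this $\fbar$ into the second part of Proposition \ref{prop:sequence} produces an element $\tilde{\bf} \in \mathcal{D}^\gamma_{p,q}$, again obtained as the $L^p$-limit of $\Gamma_{x,x_n}\fbar^{(n)}(x_n)$, with $\vvvert\tilde{\bf}\vvvert_{\mathcal{D}^\gamma_{p,q}} \lesssim \vvvert\fbar\vvvert_{\bar{\mathcal{B}}^\gamma_{p,q}}$. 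Uniqueness of the $L^p$-limit again forces $\tilde{\bf} = \bf$, whence $\bf \in \mathcal{D}^\gamma_{p,q}$ with $\vvvert\bf\vvvert_{\mathcal{D}^\gamma_{p,q}} \lesssim \vvvert\bf\vvvert_{\mathcal{B}^\gamma_{p,q}}$. Combining the two embeddings yields $\mathcal{D}^\gamma_{p,q} = \mathcal{B}^\gamma_{p,q}$ with equivalent norms.

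I expect the only genuinely delicate point to be the identification of the limits, namely checking that passing from $\bf$ to its local averages $\fbar$ and then reconstructing via the $L^p$-limit returns the original $\bf$ rather than another representative. This is precisely where one must invoke that the convergence $\Gamma_{x,x_n}\fbar^{(n)}(x_n) \to \bf(x)$ in $L^p$ holds both for every $\bf$ in the Besov scale (part of the construction underlying \cite[Thm 2.15]{Hairer:2017}) and for every $\bf \in \mathcal{D}^\gamma_{p,q}$ (Proposition \ref{prop:sequence}); everything else is a formal matching of two norm equivalences through the common intermediate space $\bar{\mathcal{B}}^\gamma_{p,q}$.
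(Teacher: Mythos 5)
Your proposal is correct and takes essentially the same route as the paper: the paper simply notes the Hairer--Labb\'{e} equivalence $\vvvert\bf\vvvert_{\mathcal{B}^\gamma_{p,q}}\sim\vvvert\fbar\vvvert_{\bar{\mathcal{B}}^\gamma_{p,q}}$ via the local averages and then invokes Proposition \ref{prop:sequence} to conclude, leaving the identification of the $L^p$-limits implicit. You spell out that identification (going through the common sequence space $\bar{\mathcal{B}}^\gamma_{p,q}$ and using uniqueness of the $L^p$-limit to match representatives), which is indeed the only delicate point, so this is a fuller write-up of the paper's intended argument rather than a different one.
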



\end{document}